\newtheorem{theorem}{Theorem}[section]
\newtheorem{lemma}[theorem]{Lemma}
\newtheorem{proposition}[theorem]{Proposition}
\newtheorem{corollary}[theorem]{Corollary}
\newtheorem{observation}[theorem]{Observation}
\theoremstyle{definition}
\newtheorem{definition}[theorem]{Definition}
\newtheorem{claim}{Claim}[theorem]
\newtheorem*{claim*}{Claim}
\newtheorem{observationInTh}{Observation}[theorem]
\newcommand{\of}[1]{\left(#1\right)}
\newcommand{\ofb}[1]{\left[#1\right]}
\newcommand{\ofc}[1]{\left\{#1\right\}}
\newcommand{\ofa}[1]{\left|#1\right|}
\newcounter{lcount}
\newcounter{lcount1}
\begin{document}
	
	\title[On conjugacy between natural extensions of 1-dimensional maps]{On conjugacy of natural extensions of 1-dimensional maps}
	\address[J. Boro\'nski]{
		AGH University of Science and Technology, Faculty of Applied Mathematics,
		al. Mickiewicza 30,
		30-059 Krak\'ow,
		Poland -- and -- National Supercomputing Centre IT4Innovations, University of Ostrava,
		IRAFM,
		30. dubna 22, 70103 Ostrava,
		Czech Republic}
	\email{boronski@agh.edu.pl}
	\address[P. Minc]{
		Department of Mathematics and Statistics, Auburn University, Auburn, AL 36849, USA}
	\email{mincpio@auburn.edu}
	\address[S. \v Stimac]{
		Department of Mathematics,
		Faculty of Science, University of Zagreb,
		Bijeni\v cka 30, 10\,000 Zagreb, Croatia}
	\email{sonja@math.hr}
	
	\subjclass[2020]{Primary 37E05, 37B45; Secondary 37D45, 20C20
	}
	
	\keywords{natural extension, inverse limit, dendrite, interval, pseudo-arc}
	
	\begin{abstract}
		We prove that for any nondegenerate dendrite $D$ there exist topologically mixing maps $F : D \to D$ and $f : [0, 1] \to [0, 1]$, such that
		the natural extensions (aka shift homeomorphisms) $\sigma_F$ and $\sigma_f$ are conjugate, and consequently the corresponding inverse limits are homeomorphic. Moreover, the map $f$ does not depend on the dendrite $D$, and can be selected so that the inverse limit
		$\underleftarrow{\lim} (D,F)$ is homeomorphic to the pseudo-arc. The result extends to any finite number of dendrites. Our work is motivated by, but independent of, the recent result of the first
		and third author on conjugation of Lozi and H\'enon maps to natural extensions of dendrite maps.
	\end{abstract}	
	
	\author{J.\ Boro\'nski}	
	\author{P.\ Minc}
	\author{S.\ \v{S}timac}
	\date{}
	\thanks{J. B. was supported by the National Science Centre, Poland (NCN), grant no. 2019/34/E/ST1/00237: ``Topological and
		Dynamical Properties in Parameterized Families of Non-Hyperbolic Attractors: the inverse limit approach''.\\
		S. \v{S}. was supported in part by the Croatian Science Foundation grant IP-2018-01-7491.}
	\maketitle
	\section{Introduction} The present paper pertains to the notion of the natural extension of a map, introduced by Rohlin in \cite{Rohlin}. Given a map $f:X\to X$ on a compact metric space $X$, the \emph{natural extension} of $f$ is the homeomorphism\footnote{In the mathematical literature this homeomorphism is also called the {\it shift} on the inverse limit $\underleftarrow{\lim} (X,f)$ and was used prior to Rohlin's work, for instance in an example considered by Williams \cite{WilliamsUn} . In our context, however, we want to emphasize the relation between noninvertible maps and their particular invertible extensions, and not merely consider a homeomorphism on the inverse limit space, hence the use of the term natural extension seems more appropriate.} $\sigma_f$ defined on the inverse limit space $\underleftarrow{\lim} (X,f)$ by $\sigma_f(x_0,x_1,x_2,\ldots)=(f(x_0),x_0,x_1,x_2,\ldots)$. It gives the unique invertible map semi-conjugate to $f$, such that any other invertible map semi-conjugate to $f$ is also semi-conjugate to $\sigma_f$. There exists a bijection between the set of invariant probability measures of $f$ and $\sigma_f$, and the topological entropies of $f$ and $\sigma_f$ coincide \cite{Rohlin}. Natural extensions of noninvertible maps of branched 1-manifolds appear in the mathematical literature in the context of studying dynamics on surfaces, e.g. in hyperbolic attractors \cite{Williams}, H\'enon attractors \cite{Barge}, \cite{Barge1}, \cite{Barge2}, $C^0$ dynamics \cite{BoylandBLMS}, \cite{BoylandGT}, \cite{Bruin}, complex dynamics \cite{Rempe-Gillen}, and rotation theory \cite{BOBirk}, \cite{BoylandInventiones}, \cite{Kwapisz}.

	Our paper is motivated by a recent result of the first and last author \cite{boronski-stimac}, in which it has been shown that for a class of mildly dissipative plane homeomorphisms, that contains positive Lebesgue measure subsets of Lozi and H\'enon maps, the dynamics on their attractors is conjugate to natural extensions of densely branching dendrite maps. In that context a question arose, whether these homeomorphisms could be also conjugate to natural extensions of maps on some simpler 1-dimensional spaces, such as the interval $[0,1]$. The homeomorphisms in question are transitive on their attractors, and sometimes even topologically mixing, and such properties are inherited by the respective dendrite maps. Therefore, it would seem as if the existence of dense orbits, together with density of the set of branch points in the dendrites, would force the corresponding inverse limit spaces to have a much richer topological structure than those of inverse limits of some simpler spaces, such as the interval, which has no branch points at all. This in turn would suggest that the above mentioned simplification is not possible. In the present paper, however, we show that such an intuition is deceitful. In Section \ref{factor} we introduce the notion of a {\it small folds property} for interval maps (Definition \ref{def:smallfolds}), and then show that every map with that property can be factored through an arbitrary dendrite. More precisely,  if $f:\ofb{0, 1}\to\ofb{0, 1}$ is a continuous surjection with the small folds property and $D$ is an arbitrary nondegenerate dendrite, then there are continuous surjections $g:\ofb{0,1}\to D$ and $h:D\to\ofb{0,1}$ such that $h\circ g=f$; see Lemma \ref{l:factf}.
	It follows that if $F=g\circ h$ then the natural extensions $\sigma_F$ and $\sigma_f$ are conjugate.
	In particular, $F$ is transitive on $D$ and $\underleftarrow{\lim} (D,F)$ is homeomorphic to the pseudo-arc if $f$ has the same properties on $\ofb{0,1}$. W.R.R. Transue and the second author of the present paper constructed a transitive map $f$ of $\ofb{0,1}$ onto itself such that $\varprojlim\of{\ofb{0,1},f}$ is homeomorphic to the pseudo-arc \cite{minc-transue} (see also \cite{Drwiega},\cite{Kawamura}, and \cite{KoscielniakOprochaTunchali} for related constructions). It is possible that this map has the small folds property, but it is not apparent how to prove it. However, in Section \ref{s:transitive+sfp} we tweak the original construction from \cite{minc-transue} to get a modified map $f$ that does have the small folds property in addition to the properties promised by \cite{minc-transue}, see Theorem \ref{t:smfolds}. This modified map $f$ can be factored through any nondegenerate dendrite $D$ creating interesting dynamics on $D$, see Theorem \ref{t:mainfactorization}.
 The following theorem is a restatement of Theorem \ref{t:mainfactorization}:
	\begin{theorem}\label{t:mainresult}
		For any nondegenerate dendrite $D$ there exist topologically mixing maps $F : D \to D$ and $f : [0, 1] \to [0, 1]$ such that the natural extensions $\sigma_F : \underleftarrow{\lim} (D,F) \to \underleftarrow{\lim} (D,F)$ and
		$\sigma_f : \underleftarrow{\lim} ([0, 1],f) \to \underleftarrow{\lim} ([0, 1],f)$ are conjugate. Moreover, the map $f$ does not depend on a dendrite $D$ and can be constructed so that $\underleftarrow{\lim} (D,F)$ is homeomorphic to the pseudo-arc.
	\end{theorem}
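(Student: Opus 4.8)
The plan is to assemble the pieces already in place, treating the two substantive results as black boxes. First I would fix, once and for all and \emph{before} any dendrite is named, the interval map $f:[0,1]\to[0,1]$ furnished by Theorem \ref{t:smfolds}: it is topologically mixing, has the small folds property, and satisfies $\varprojlim([0,1],f)\cong$ pseudo-arc. Fixing $f$ in advance is exactly what will yield its independence from $D$. Given an arbitrary nondegenerate dendrite $D$, I would then invoke Lemma \ref{l:factf} to obtain continuous surjections $g:[0,1]\to D$ and $h:D\to[0,1]$ with $h\circ g=f$, and set $F:=g\circ h:D\to D$, which is again a continuous surjection because $g$ and $h$ are.

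The second step records the two semiconjugacy identities produced by the factorization: since $f=h\circ g$ and $F=g\circ h$, one has $g\circ f=g\circ h\circ g=F\circ g$ and $h\circ F=h\circ g\circ h=f\circ h$. Thus $g$ is a surjective semiconjugacy from $([0,1],f)$ onto $(D,F)$, exhibiting $(D,F)$ as a factor of $([0,1],f)$. Since topological mixing passes to factors, $F$ is mixing: for nonempty open $U,V\subseteq D$ the preimages $g^{-1}(U),g^{-1}(V)$ are nonempty open, mixing of $f$ gives $f^n\bigl(g^{-1}(U)\bigr)\cap g^{-1}(V)\neq\emptyset$ for all large $n$, and picking $w\in g^{-1}(U)$ with $f^n(w)\in g^{-1}(V)$ and applying $g\circ f^n=F^n\circ g$ shows $F^n(g(w))\in F^n(U)\cap V$. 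This settles the dynamical requirements on both $F$ and $f$.

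For the conjugacy of the natural extensions I would write the transfer homeomorphism explicitly. Define $\Phi:\varprojlim([0,1],f)\to\varprojlim(D,F)$ by $\Phi(x_0,x_1,x_2,\dots)=(g(x_1),g(x_2),\dots)$ and $\Psi:\varprojlim(D,F)\to\varprojlim([0,1],f)$ by $\Psi(y_0,y_1,y_2,\dots)=(h(y_1),h(y_2),\dots)$; the identities $g\circ f=F\circ g$ and $f\circ h=h\circ F$ show that both maps land in the correct inverse limit and are continuous, while a direct computation gives $\Psi\circ\Phi=\sigma_f^{-1}$ and $\Phi\circ\Psi=\sigma_F^{-1}$, so $\Phi$ is a continuous bijection between compact metric spaces, hence a homeomorphism. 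A second short computation, using $F(g(x_1))=g(f(x_1))=g(x_0)$, yields $\Phi\circ\sigma_f=\sigma_F\circ\Phi$, so $\Phi$ conjugates $\sigma_f$ to $\sigma_F$. In particular $\varprojlim(D,F)\cong\varprojlim([0,1],f)\cong$ pseudo-arc, which completes the statement.

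A word on where the difficulty lies. At this level the argument is a routine assembly, and the only points demanding attention are getting the direction of the semiconjugacy right, so that mixing is \emph{pushed forward} along $g$ rather than (incorrectly) pulled back, and confirming that $\Phi$ is a genuine homeomorphism rather than merely a conjugating bijection. The real obstacles are entirely absorbed into the two results I am invoking: Lemma \ref{l:factf}, whose proof must realize an arbitrary dendrite as a factor while preserving the interval factorization $h\circ g=f$ (this is precisely where the small folds property is used), and Theorem \ref{t:smfolds}, which must simultaneously secure mixing, the pseudo-arc inverse limit, and the small folds property through a delicate modification of the Minc--Transue construction.
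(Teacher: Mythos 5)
Your proposal is correct and follows essentially the same route as the paper: fix $f$ from Theorem \ref{t:smfolds}, factor it through $D$ via Lemma \ref{l:factf}, set $F=g\circ h$, and transfer mixing and the topology of the inverse limit from $f$ to $F$. The only difference is cosmetic: where the paper invokes Proposition \ref{p:hg-gh} (the interleaved inverse-limit argument and its mixing-transfer statement), you re-derive the same content inline, writing the conjugating homeomorphism $\Phi(x_0,x_1,\dots)=(g(x_1),g(x_2),\dots)$ explicitly and pushing mixing forward along the semiconjugacy $g$ --- both computations check out.
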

Note that the interval maps $f$ such that $\underleftarrow{\lim} ([0,1],f)$ is the pseudo-arc are generic in the closure of the subset of maps of the interval that have dense set of periodic points \cite{CO}. Moreover, all such maps have infinite topological entropy by \cite{Mouron} (see also \cite{BO} for a stronger result). Consequently, the same is true for the maps $F,\sigma_F$ and $\sigma_f$. This is noteworthy since, although any transitive interval map has positive entropy \cite{Blokh}, there do exist transitive zero entropy maps on dendrites \cite{KwietniakD} (see also \cite{OprochaD}, \cite{Stu}, \cite{HoehnD} and \cite{MouronMix} for related results). Note also that the class of dendrites is very rich. Every dendrite is locally connected, but there is a number of other properties with respect to which various elements of the class differ from each other, such as the properties of the subsets of end points and branch points. The set of end points in a dendrite can be finite, countably infinite or even uncountable, and either be closed or not. The set of branch points do not need to be finite, but can be countably infinite, and even dense in the dendrite. In addition, a branch point may separate the dendrite into infinitely many components. There exists a universal object in the class of all dendrites, the Wa\.zewski dendrite $D_\omega$ \cite{Wazewski}; i.e. any dendrite $D$ embeds as a closed subset of $D_\omega$. In that context, below we formulate a stronger version of our main result Theorem \ref{t:mfk}.
	\begin{theorem}
	For any $k\in\mathbb{N}$, and any dendrites $D_1,D_2,\ldots,D_k$, there exist topologically mixing maps $\{F_i : D_i \to D_i\}_{i=1}^{k}$ such that for any $i,j\in\{1,2,\ldots,k\}$ we have
	\begin{enumerate}
		\item $F_i$ and $F_j$ are semi-conjugate,
    \item the natural extensions $\sigma_{F_i} $ and
	$\sigma_{F_j}$ are conjugate, and
	\item  the inverse limits $\underleftarrow{\lim} (D_i,F_i)$ and $\underleftarrow{\lim} (D_j,F_j)$ are homeomorphic.
\end{enumerate}	
In addition, $F_1$ can be chosen so that $\underleftarrow{\lim} (D_i,F_i)$ is the pseudo-arc, for any $i=1,2,\ldots , k$.
\end{theorem}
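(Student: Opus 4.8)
The plan is to apply the uniform construction behind Theorem \ref{t:mainresult} to all of $D_1,\dots,D_k$ at once, driven by \emph{one and the same} interval map $f$ serving as a common model, and then to read the three assertions off the resulting commuting diagrams. Concretely, I would first fix the topologically mixing surjection $f:[0,1]\to[0,1]$ furnished by Theorem \ref{t:smfolds}; it has the small folds property and satisfies that $\underleftarrow{\lim}([0,1],f)$ is the pseudo-arc. For each $i\in\{1,\dots,k\}$ (each $D_i$ being nondegenerate) I would then invoke Lemma \ref{l:factf} with $D=D_i$ to obtain continuous surjections $g_i:[0,1]\to D_i$ and $h_i:D_i\to[0,1]$ with $h_i\circ g_i=f$, and set $F_i=g_i\circ h_i$. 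Since $F_i$ is precisely the map produced by the factorization of Theorem \ref{t:mainresult} applied to $D_i$, it is topologically mixing, so the maps $F_i$ have the required dynamical property before any comparison between them is made.

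Assertions (2) and (3) then follow almost formally from the \emph{uniformity} of $f$. By the consequence of Lemma \ref{l:factf} recorded in the Introduction, the pair $F_i=g_i\circ h_i$, $f=h_i\circ g_i$ yields a conjugacy $\sigma_{F_i}\cong\sigma_f$ for every $i$. Because all these conjugacies pass through the single homeomorphism $\sigma_f$, transitivity of conjugacy gives $\sigma_{F_i}\cong\sigma_{F_j}$ for all $i,j$, which is (2). Moreover a conjugacy between $\sigma_{F_i}$ and $\sigma_{F_j}$ is by definition a homeomorphism between their domains $\underleftarrow{\lim}(D_i,F_i)$ and $\underleftarrow{\lim}(D_j,F_j)$, so (3) is immediate; equivalently, each inverse limit is homeomorphic to the common model $\underleftarrow{\lim}([0,1],f)$.

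For the semi-conjugacy (1) I would exhibit the map explicitly. Set $\phi_{ij}=g_j\circ h_i:D_i\to D_j$, which is a continuous surjection as a composition of the surjections $h_i$ and $g_j$. Using $h_i\circ g_i=f=h_j\circ g_j$ one checks the intertwining relation
\[
\phi_{ij}\circ F_i=g_j\circ h_i\circ g_i\circ h_i=g_j\circ f\circ h_i=g_j\circ h_j\circ g_j\circ h_i=F_j\circ\phi_{ij},
\]
so $\phi_{ij}$ is a semi-conjugacy from $F_i$ onto $F_j$ (and symmetrically $g_i\circ h_j$ runs the other way). This is the only genuinely computational point in the argument, and it is short.

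Finally, for the addendum I would keep $f$ as the map of Theorem \ref{t:smfolds}, so that $\underleftarrow{\lim}([0,1],f)$ is the pseudo-arc; by (3) each $\underleftarrow{\lim}(D_i,F_i)$ is then homeomorphic to it and hence is the pseudo-arc. The phrase ``$F_1$ can be chosen'' reflects that the entire family is pinned down by the single base map $f$ — one may even take $D_1=[0,1]$ and $F_1=f$, the arc being a dendrite — and once that choice is made all inverse limits become pseudo-arcs simultaneously. The main obstacle is not in the present theorem, where the work is organizational, but in the machinery it rests on: the factorization of Lemma \ref{l:factf} and the construction of a topologically mixing small-folds map with pseudo-arc inverse limit in Theorem \ref{t:smfolds}. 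The genuinely new observation here is that a \emph{single} $f$ services all the dendrites at once, which is exactly what forces the pairwise conjugacies and homeomorphisms to line up through a common model.
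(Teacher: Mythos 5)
Your proposal is correct, and its skeleton is exactly the paper's: one interval map $f$ from Theorem \ref{t:smfolds} (topologically exact, small folds property, pseudo-arc inverse limit), factored through each dendrite by Lemma \ref{l:factf} as $f=h_i\circ g_i$, with Proposition \ref{p:hg-gh} supplying mixing and the identification of inverse limits; like the paper, you lean on the fact recorded in the Introduction that the interleaving homeomorphism between $\varprojlim\of{\ofb{0,1},h\circ g}$ and $\varprojlim\of{D,g\circ h}$ commutes with the shifts, which Proposition \ref{p:hg-gh} as formally stated gives only at the level of spaces but which is standard and true. The one genuine difference is the definition of the dendrite maps. You use the hub-and-spoke choice $F_i=g_i\circ h_i$, so each $\sigma_{F_i}$ is conjugate to the single model $\sigma_f$, assertions (2) and (3) follow by transitivity, and assertion (1) comes from the explicit surjections $\phi_{ij}=g_j\circ h_i$; your intertwining computation, which uses $h_i\circ g_i=f=h_j\circ g_j$, is correct, and since the paper's notion of semi-conjugacy is directional, it is worth noting (as you do parenthetically) that $g_i\circ h_j$ runs the other way. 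The paper instead chains the dendrites in a cycle, as the diagram after Theorem \ref{t:mfk} shows: for $k=2$ it takes $F_1=g_1\circ h_2\circ g_2\circ h_1$, so the bottom row is a single alternating inverse sequence $D_1\leftarrow D_2\leftarrow D_1\leftarrow\cdots$ whose even and odd subsequences realize $\varprojlim\of{D_1,F_1}$ and $\varprojlim\of{D_2,F_2}$ simultaneously, with the semi-conjugacies being the bonding maps $g_j\circ h_i$ themselves — everything is read off one diagram. What your variant buys is worth flagging: since all $h_j\circ g_j=f$, the paper's cyclic $F_i$ equals $\of{g_i\circ h_i}^k$, whose natural extension is conjugate to $\sigma_{f^k}$ (i.e.\ to $\sigma_f^k$) rather than manifestly to $\sigma_f$; your choice $F_i=g_i\circ h_i$ makes the stronger claim of Theorem \ref{t:mfk}, conjugacy of $\sigma_{F_i}$ with $\sigma_f$, literally immediate, at no cost to (1)--(3). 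The only cosmetic slip is the aside that one ``may even take $D_1=\ofb{0,1}$'': the dendrites are given in advance, and the addendum concerns choosing $f$ (hence $F_1$) so that the common model $\varprojlim\of{\ofb{0,1},f}$ is the pseudo-arc — which the rest of your final paragraph handles correctly.
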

	The above theorems produce, what seems to be, a very surprising family of examples for the question of conjugacy between natural extensions of self-maps of distinct dendrites. These examples, however, do not provide any new pieces of information for H\'enon or Lozi maps. Moreover, it seems rather unplausible that, for parameter values considered in \cite{boronski-stimac}, these maps would semi-conjugate to interval maps with small folds property, should any of them semi-conjugated to any interval map at all. In fact it is known that for certain H\'enon maps this is never true \cite{Barge}. Furthermore, H\'enon and Lozi attractors discussed in \cite{boronski-stimac} always contain nondegenerate arc components (such as branches of unstable manifolds), whereas the pseudo-arc contains no nondegenerate arcs at all. Recall that the {\it pseudo-arc} is a fractal-like object first constructed by Knaster in 1922. It was rediscovered by Moise in 1948 \cite{Moise}, who constructed it as a hereditarily equivalent continuum distinct from the arc, and in the same year by Bing who obtained it to show that there exist a topologically homogeneous\footnote{A space $X$ is topologically homogeneous if for any $y,z\in X$ there exists a homeomorphism $H:X\to X$ such that $H(y)=z$} plane continuum, distinct from the circle \cite{Bing}. Since then the pseudo-arc received a lot of attention in the mathematical literature, mainly in topology, but it also appears in other branches of mathematics, such as dynamical systems, including smooth and even complex dynamics; see e.g.\ \cite{Cheritat}, \cite{Handel}, \cite{Herman}, \cite{Paco}. Several topological characterizations of the pseudo-arc are known. One of the most recent ones, by Oversteegen and Hoehn \cite{HO} from 2016, states that the pseudo-arc is a unique topologically homogeneous plane nonseparating continuum (see also \cite{HO2}).
	
	\section{Preliminaries}\label{s:prelim}
	A map is a continuous function. Given a map $f:X\to X$ on a compact metric space $X$,
	we let
	\begin{equation}
	\underleftarrow{\lim} (X,f)
	=
	\{\big(x_0,x_1,\ldots,\big) \in X^{\mathbb{N}_0} :
	x_i\in X, x_i=f(x_{i+1}), \text{ for any }i \in {\mathbb{N}_0} \}
	\end{equation}
	and call $\underleftarrow{\lim} (X,f)$ the inverse limit of $X$ with bonding map $f$, or inverse limit of $f$ for short. It is equipped with
	metric induced from the
	\emph{product metric} in $X^{\mathbb{N}_0}$.
	The map $f$ is said to be {\it transitive} if for any two nonempty open sets $U,V\subset X$ there exists an $n\in\mathbb{N}$ such that $f^{n}(U)\cap V\neq\emptyset$. The map $f$ is said to be {\it topologically mixing} if for any two nonempty open sets $U,V\subset X$ there exists an $N\in\mathbb{N}$ such that $f^{n}(U)\cap V\neq\emptyset$ for all $n>N$. The map $f$ is said to be {\it topologically exact}, or {\it locally eventually onto}, if for every nonempty open set $U$ there exists an $n$ such that $f^n(U)=X$. It is evident from the definitions that topological exactness implies mixing, which implies transitivity.  A map $F:Y\to Y$ is said to be {\it semi-conjugate} to $f$ if there exists a surjective map $\varphi:Y\to X$ such that $f\circ \varphi=\varphi\circ F$. If in addition $\varphi$
	is a homeomorphism then $F$ is said to be {\it conjugate} to $f$. A {\it continuum} is a compact and connected metric space that contains at least two points.  A {\it dendrite} is a  locally connected continuum $D$ such that for all
	$x, y \in D$ there exists a unique (possibly degenerate)
	arc in $D$ with endpoints $x$ and $y$. We denote this arc by $xy$. The arcs $xy$ and $yx$ are the same as sets. We assume that $xy$ is oriented from $x$ to $y$ if this is needed. So, $x$ and $y$ are the first and the last points, respectively, of $xy$. An {\it end point} of $D$ is a point $e$ such that $D\setminus\{e\}$ is connected. The set of all end points of $D$ will by denoted by $E_D$. A {\it branch point} $b\in D$ is a point such that $D\setminus\{b\}$ has at least three components. For an arbitrary $x\in D$ and arbitrary positive number $\epsilon$, by $B_D\of{x,\epsilon}$ we will denote the open ball in $D$ with center at $x$ and radius $\epsilon$. In the present paper, a dendrite with finitely many branch points will be called a {\it tree}. An {\it arc} is a dendrite with no branch points.
	
	If $x$ and $y$ are real numbers, by $\ofb{x,y}$ we understand the closed interval between $x$ and $y$, regardless whether $x\le y$ or $x\ge y$. Similarly as in the case of dendrites, we use the order of endpoints to indicate the orientation of the interval. We do not use the notation $xy=\ofb{x,y}$ in the context of real numbers, even though $\ofb{x,y}$ is a dendrite.

	\section{Preliminary results on dendrites}\label{s:dendrite}
	
	G. T. Whyburn proved that every dendrite $D$ can be expressed as $D=E_D\cup\bigcup_{i=0}^{\infty}A_i$ where $\of{A_i}$ is a sequence of arcs such that $\lim_{i\to\infty}\operatorname{diam}\of{A_i}=0$; see \cite[V, (1.3)(iii), p.89]{Whyburn} and \cite[Corollary 10.28, p. 177]{Nadler}. Since we need a slightly stronger version of Whyburn's theorem we prove it below, see Theorem \ref{whyburn}.
	We start with the following simple observation.
	\begin{proposition}\label{p:AiTree}
		Let $T$ be a tree. Let $p_0\in E_T$ and $q_0,q_1,\dots,q_k$ be an enumeration of all points in $E_T\setminus\ofc{p_0}$. Then there exists a unique sequence of points $p_1,\dots,p_n\in T\setminus E_T$ such that, if $A_i=p_iq_i$ for all $i=0,\dots,k$, then
		$A_i\cap\bigcup_{j=0}^{i-1}A_j=\ofc{p_i}$ for each $i=1,\dots,k$.
		Moreover, $\bigcup_{j=0}^{k}A_j=T$.
	\end{proposition}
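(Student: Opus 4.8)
The plan is to build the arcs $A_i$ one at a time, at each stage attaching a new arc to the tree constructed so far at a single point determined by a first-point (nearest-point) retraction. Write $B_{i-1}=\bigcup_{j=0}^{i-1}A_j$ and set $A_0=p_0q_0$, so $B_0=A_0$ is a nondegenerate arc. The central tool I would isolate first is the following standard property of dendrites: if $K\subseteq T$ is a subdendrite and $x\in T\setminus K$, then there is a unique point $r_K(x)\in K$ with $x\,r_K(x)\cap K=\{r_K(x)\}$, and this point satisfies $r_K(x)\in xy$ for every $y\in K$. Existence follows by walking along $xy_0$ from $x$ (for a fixed $y_0\in K$) to the first point $p$ meeting the closed set $K$; for any $y\in K$, the median of $x,y_0,y$ lies on $y_0y\subseteq K$ and on $xy_0$, hence at or beyond $p$, so $p\in xy$. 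Uniqueness follows because two such points $p,p'$ would force the median of $x,p,p'$ to lie in $pp'\subseteq K$ and in both $xp$ and $xp'$, whence $p=p'$.

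With this in hand I would define $p_i=r_{B_{i-1}}(q_i)$ and $A_i=p_iq_i$ by induction on $i=1,\dots,k$, checking three things at each step. First, $q_i\notin B_{i-1}$: since $q_i\in E_T$ is distinct from $p_0,q_0,\dots,q_{i-1}$ and (inductively) from $p_1,\dots,p_{i-1}\in T\setminus E_T$, it is not an endpoint of any $A_j$; and an endpoint of $T$ cannot lie in the interior of an arc of $T$, because deleting an interior point of $xy$ separates $x$ from $y$. Thus $q_i\notin A_j$ for $j<i$, so the retraction is applied to a point outside $B_{i-1}$, and $p_i\ne q_i$. Second, $A_i\cap B_{i-1}=\{p_i\}$ is exactly the defining property of $r_{B_{i-1}}$. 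Third, $p_i\notin E_T$: the point $p_i$ has a neighbor inside the nondegenerate continuum $B_{i-1}$ and a neighbor along $A_i$ toward $q_i$ lying outside $B_{i-1}$, so $T\setminus\{p_i\}$ is disconnected and $p_i$ has order at least two; in particular $p_i\ne p_0$ and $p_i\ne q_j$. Finally, $B_i=B_{i-1}\cup A_i$ is connected (the two pieces share $p_i$) and is a finite union of arcs, hence a tree, completing the induction.

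It remains to prove $B_k=T$. By construction $B_k$ is a subtree containing every endpoint of $T$, since $p_0\in A_0$ and $q_j\in A_j$ for all $j$. Every point of $T$ lies on an arc joining two endpoints: if $z$ is not itself an endpoint, following two distinct directions out of $z$ in the finite tree $T$ reaches endpoints $e,e'$ with $z\in ee'$. Hence $T$ is the union of the arcs $ee'$ over $e,e'\in E_T$; as $B_k$ is connected and contains $E_T$, each such arc lies in $B_k$, giving $T\subseteq B_k\subseteq T$. For uniqueness, I would argue by induction that any admissible sequence $p_1',\dots,p_k'$ produces the same sets $B_{i-1}'=B_{i-1}$, and that the requirement $p_i'q_i\cap B_{i-1}=\{p_i'\}$ with $p_i'\in B_{i-1}$ characterizes $p_i'$ as $r_{B_{i-1}}(q_i)$; this is precisely the uniqueness clause of the retraction property, so $p_i'=p_i$.

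The main obstacle is the careful handling of the retraction—establishing existence, uniqueness, and the defining intersection property via medians—together with verifying $q_i\notin B_{i-1}$ and $p_i\notin E_T$, which are exactly what make the construction well defined at each stage. Once these dendrite facts are in place, the inductive growth of $B_i$ and the final convex-hull argument for $B_k=T$ are routine.
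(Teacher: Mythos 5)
Your proof is correct. The paper states this proposition without proof (it is offered as a ``simple observation,'' followed only by the remark that the $p_i$ need not be distinct), so there is no argument of the paper to compare against; your induction via the first-point retraction $r_{B_{i-1}}$ is precisely the standard argument one would supply, and you handle all the points that actually need checking: $q_i\notin B_{i-1}$ (endpoints of $T$ cannot be interior to arcs), $p_i\notin E_T$ (hence $p_i\ne p_0,q_j$), the retraction's uniqueness yielding the uniqueness clause, and $B_k\supseteq E_T$ forcing $B_k=T$ since every point of the tree lies on an arc between two endpoints. One cosmetic note: the statement's $p_1,\dots,p_n$ should read $p_1,\dots,p_k$, which your argument implicitly corrects.
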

	Note that in the above proposition the points $p_1,\dots,p_n$ do not need to be distinct.
	
	Now let $D$ be a dendrite which is not a tree, and let $\mathcal{S}=(s_1,s_2,\dots)$ be a sequence of points dense in $D$.
	
	\begin{proposition}\label{prop:Ai}
		There exists an infinite sequence of nondegenerate arcs $A_0=p_0q_0, A_1=p_1q_1, A_2=p_2q_2,\dots$ contained in $D$ such that
		\begin{enumerate}
			\item $p_0,q_0\in E_D$, and
			\item for each integer $i\ge1$
			\begin{enumerate}
				\item $A_i\cap\bigcup_{j=0}^{i-1}A_j=\ofc{p_i}$,
				\item $q_i\in E_D$, and
				\item\label{prop:Ai:si} $s_i\in \bigcup_{j=0}^{i}A_j$.
			\end{enumerate}
		\end{enumerate}
		
	\end{proposition}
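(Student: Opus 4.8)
The plan is to build the arcs $A_0,A_1,A_2,\dots$ one at a time by induction on $i$, using at each stage two standard structural features of dendrites: the first-point retraction onto a finite subtree, and the possibility of prolonging an arc to an end point of $D$. For the base step, since $D$ is a nondegenerate dendrite it has at least two distinct end points; I choose any two of them as $p_0,q_0$ and put $A_0=p_0q_0$, a nondegenerate arc satisfying (1). No condition on $s$ is imposed at this stage, since the dense sequence is indexed from $s_1$ and will be handled from step $i=1$ on.

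For the inductive step, suppose $A_0,\dots,A_{i-1}$ have been constructed so that (2) holds up to index $i-1$, and set $T=\bigcup_{j=0}^{i-1}A_j$. Being a finite union of arcs, $T$ is a subtree of $D$, and in particular it is a \emph{proper} subdendrite: $T$ has only finitely many branch points, whereas $D$, not being a tree, has infinitely many, so $D\setminus T\neq\emptyset$. I now pick a target point $z_i$ as follows: if $s_i\notin T$ let $z_i=s_i$; otherwise (so that (2c) already holds) let $z_i$ be any point of the nonempty set $D\setminus T$. In the first case, capturing $z_i$ will force $s_i\in A_i$; in the second case the choice of $z_i$ merely guarantees that a genuinely new nondegenerate arc can still be appended, which is exactly where the hypothesis that $D$ is not a tree is used.

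Given $z_i\notin T$, let $p_i$ be the first point of $T$ on the arc $z_ip_i$, i.e.\ $p_i=r(z_i)$ for the standard first-point retraction $r\colon D\to T$; it is characterized by $z_ip_i\cap T=\{p_i\}$, and for every $t\in T$ the arc $z_it$ passes through $p_i$. Let $C$ be the component of $D\setminus\{p_i\}$ containing $z_i$. Since each $t\in T\setminus\{p_i\}$ is separated from $z_i$ by $p_i$, no such $t$ lies in $C$, so $C\cap T=\emptyset$. I then prolong $p_iz_i$ beyond $z_i$ to an end point of $D$: if $z_i\in E_D$ set $q_i=z_i$; otherwise the component $B$ of $D\setminus\{z_i\}$ not containing $p_i$ is nonempty and open, $\overline{B}$ is a nondegenerate subdendrite, and any end point $q_i$ of $\overline{B}$ with $q_i\neq z_i$ is automatically an end point of $D$ (because $B$ is open, a neighborhood of $q_i$ in $D$ lies in $\overline{B}$) and satisfies $z_i\in p_iq_i$. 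Setting $A_i=p_iq_i$, one verifies that $A_i\setminus\{p_i\}\subseteq C$, whence $A_i\cap T=\{p_i\}$, giving (2a); $q_i\in E_D$ gives (2b); and $z_i\in A_i$ gives (2c). The induction thus produces the required infinite sequence, and its union is dense in $D$ by (2c) together with density of $\mathcal{S}$.

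The routine ingredients—existence of end points and of the first-point retraction onto a subdendrite—are standard dendrite theory. The step that genuinely needs care is verifying that the prolongation of $p_iz_i$ past $z_i$ to $q_i$ does not re-enter $T$; that is, that all of $A_i$ except $p_i$ remains in the single component $C$ of $D\setminus\{p_i\}$ disjoint from $T$. This is what secures the clean intersection $A_i\cap T=\{p_i\}$ and is the main obstacle to watch. I expect this to follow once it is checked that the whole extended arc $p_iq_i$ lies in $\overline{C}$, using that both the segment $p_iz_i$ (minus $p_i$) and the segment $z_iq_i$ (contained in $\overline{B}\subseteq\overline{C}$) stay on the $z_i$-side of $p_i$.
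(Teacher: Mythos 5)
Your proof is correct: the paper states this proposition without proof, and your induction --- taking the first-point retraction of $z_i$ onto the tree $T=\bigcup_{j=0}^{i-1}A_j$ to obtain $p_i$, then prolonging past $z_i$ inside the closure of a complementary component of $D\setminus\{z_i\}$ to an end point $q_i$ of $D$, with $z_i=s_i$ whenever $s_i\notin T$ --- is exactly the routine construction the authors evidently intend, and the hypothesis that $D$ is not a tree correctly guarantees $D\setminus T\ne\emptyset$ at every stage. The one delicate step, that $A_i\setminus\{p_i\}$ stays in the component $C$ of $D\setminus\{p_i\}$ disjoint from $T$, is handled soundly: $p_iz_i\setminus\{p_i\}$ lies in $C$ because $z_ip_i\cap T=\{p_i\}$, and $z_iq_i\subseteq\overline{B}=B\cup\{z_i\}$, which is connected and avoids $p_i$, hence also lies in $C$.
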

	
	Let $A_0=p_0q_0, A_1=p_1q_1, A_2=p_2q_2,\dots$ be as in the above proposition.
	
	\begin{proposition}\label{p:tree}
		$\bigcup_{j=0}^iA_j$ is a tree for each integer $i\ge0$.
	\end{proposition}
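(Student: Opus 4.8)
The plan is to induct on $i$, writing $T_i=\bigcup_{j=0}^{i}A_j$. The base case $i=0$ is immediate: $T_0=A_0=p_0q_0$ is a nondegenerate arc, i.e.\ a dendrite with no branch points, hence a tree. For the inductive step I assume $T_{i-1}$ is a tree and study the effect of adjoining the single arc $A_i$, using the crucial fact from Proposition~\ref{prop:Ai}(2)(a) that $A_i\cap T_{i-1}=\ofc{p_i}$, a single point that is moreover an endpoint of the arc $A_i$.

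First I would check that $T_i=T_{i-1}\cup A_i$ is a continuum: it is compact as a finite union of compacta, and connected because $T_{i-1}$ and $A_i$ are connected and share the point $p_i$. Being a subcontinuum of the dendrite $D$, and since every subcontinuum of a dendrite is again a dendrite \cite{Nadler}, $T_i$ is automatically a dendrite. This supplies all the dendrite axioms at once and reduces the proposition to the single claim that $T_i$ has only finitely many branch points.

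The core of the argument is to show that the branch set $B_i$ of $T_i$ satisfies $B_i\subseteq B_{i-1}\cup\ofc{p_i}$, where $B_{i-1}$ is the finite branch set of $T_{i-1}$. I would do this by locating a point $x\in T_i$. If $x\in A_i\setminus\ofc{p_i}$, then $x\notin T_{i-1}$, and deleting $x$ from the arc $A_i$ leaves at most two pieces, each of which can reach $T_{i-1}$ only through $p_i$; hence $T_i\setminus\ofc{x}$ has at most two components and $x$ is not a branch point. If $x\in T_{i-1}\setminus\ofc{p_i}$, then $x\notin A_i$, and since $A_i$ is connected and meets $T_{i-1}\setminus\ofc{x}$ only at $p_i$, adjoining it merely enlarges the one component that contains $p_i$; thus $T_i\setminus\ofc{x}$ and $T_{i-1}\setminus\ofc{x}$ have the same number of components, and $x$ is a branch point of $T_i$ exactly when it is one of $T_{i-1}$. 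Only $x=p_i$ can be a genuinely new branch point.

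The sole point needing care is the count at $x=p_i$: here I would observe that $A_i\setminus\ofc{p_i}$ and $T_{i-1}\setminus\ofc{p_i}$ are disjoint and each closed in $T_i\setminus\ofc{p_i}$, being the traces of the compacta $A_i$ and $T_{i-1}$; hence they are separated, and $A_i\setminus\ofc{p_i}$, being connected, contributes exactly one extra component. In every case $B_i\subseteq B_{i-1}\cup\ofc{p_i}$ is finite, so $T_i$ is a tree and the induction closes. The main obstacle is simply the clean bookkeeping of the components at the attaching point $p_i$; once the clopen separation there is recorded, everything else follows formally from the subcontinuum fact and the one-point intersection property.
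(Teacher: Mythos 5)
Your proof is correct: the paper states Proposition~\ref{p:tree} without proof, and your induction --- adjoining $A_i$ to $T_{i-1}$ at the single attachment point $p_i$ guaranteed by Proposition~\ref{prop:Ai}, invoking the standard fact that every subcontinuum of a dendrite is a dendrite, and establishing $B_i\subseteq B_{i-1}\cup\{p_i\}$ via the three-case component count --- is precisely the routine argument the authors leave implicit. The step genuinely needing care is the one you isolated, namely that $A_i\setminus\{p_i\}$ and $T_{i-1}\setminus\{p_i\}$ are relatively clopen in $T_i\setminus\{p_i\}$ (so no components merge and exactly one new component appears at $p_i$, while a point $x\ne p_i$ gains no components since $A_i$ meets $T_{i-1}\setminus\{x\}$ only through $p_i$), and your treatment of it is sound.
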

	\begin{corollary}\label{c:arcab}
		For every $a,b\in \bigcup_{j=0}^{\infty}A_j$ there is an integer $n\ge0$ such that $ab\subset\bigcup_{j=0}^{n}A_j$.
	\end{corollary}
	
	\begin{proposition}\label{prop:arcab}
		For every $a,b\in D\setminus E_D$ there is an integer $n\ge0$ such that $ab\subset\bigcup_{j=0}^{n}A_j$.
	\end{proposition}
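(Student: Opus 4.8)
The plan is to reduce the statement to Corollary~\ref{c:arcab} by showing that every non-end point of $D$ already lies in $\mathcal{A}:=\bigcup_{j=0}^{\infty}A_j$; that is, $D\setminus E_D\subseteq\mathcal{A}$. Once this inclusion is established, given $a,b\in D\setminus E_D$ we have $a,b\in\mathcal{A}$, and Corollary~\ref{c:arcab} immediately furnishes an integer $n\ge0$ with $ab\subseteq\bigcup_{j=0}^{n}A_j$, which is exactly the claim.

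To prove $D\setminus E_D\subseteq\mathcal{A}$, I would first record that $\mathcal{A}$ is dense in $D$: by property~(\ref{prop:Ai:si}) of Proposition~\ref{prop:Ai} each term $s_i$ of the dense sequence $\mathcal{S}$ satisfies $s_i\in\bigcup_{j=0}^{i}A_j\subseteq\mathcal{A}$, so $\mathcal{S}\subseteq\mathcal{A}$ and hence $\overline{\mathcal{A}}=D$.

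Now fix $a\in D\setminus E_D$. Since $a$ is not an end point, $D\setminus\ofc{a}$ is disconnected and so has at least two distinct components $C_1$ and $C_2$. Because $D$ is locally connected and $D\setminus\ofc{a}$ is open, the components $C_1,C_2$ are themselves open in $D$, and by density of $\mathcal{A}$ I may choose $c\in C_1\cap\mathcal{A}$ and $d\in C_2\cap\mathcal{A}$. I then claim that the unique arc $cd$ contains $a$: the arc $cd$ is connected, so if $a\notin cd$ then $cd$ would be a connected subset of $D\setminus\ofc{a}$ meeting the two different components $C_1$ and $C_2$, which is impossible. Hence $a\in cd$. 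Finally, since $c,d\in\mathcal{A}$, Corollary~\ref{c:arcab} yields an integer $n\ge0$ with $cd\subseteq\bigcup_{j=0}^{n}A_j$, and therefore $a\in cd\subseteq\mathcal{A}$. This establishes the inclusion and completes the proof.

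I do not expect a serious obstacle. The only points demanding care are the standard dendrite facts being invoked: that a non-end point genuinely separates $D$ (so that $D\setminus\ofc{a}$ has at least two components), that the components of the open set $D\setminus\ofc{a}$ are open (local connectedness of $D$), and that an arc joining points in different components of $D\setminus\ofc{a}$ must pass through $a$ (uniqueness and connectedness of arcs in a dendrite). The essential mechanism is simply that the separating point $a$ is trapped on an arc between two nearby points of the dense set $\mathcal{A}$, to which Corollary~\ref{c:arcab} then applies.
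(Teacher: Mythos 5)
Your proof is correct, but it runs in a genuinely different order than the paper's. You first establish the pointwise inclusion $D\setminus E_D\subset\bigcup_{j=0}^{\infty}A_j$ --- which, in the paper, is stated \emph{after} Proposition \ref{prop:arcab} as a corollary of it --- and then deduce the proposition from Corollary \ref{c:arcab}. There is no circularity in this reversal: your argument for the inclusion (a non-end point $a$ disconnects $D$ by the paper's very definition of end point, the components of $D\setminus\{a\}$ are open by local connectedness, and $a$ is trapped on an arc $cd$ between two points of the dense set $\bigcup_j A_j$, which Corollary \ref{c:arcab} places inside a finite union) uses only Proposition \ref{prop:Ai}, Proposition \ref{p:tree}, and Corollary \ref{c:arcab}, none of which depends on Proposition \ref{prop:arcab}. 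The paper instead traps the \emph{entire} arc $ab$ in one stroke: since $a,b\notin E_D$ the arc extends to $a'b'$ with $a'a\cap ab=\{a\}$ and $ab\cap bb'=\{b\}$, subdendrites $D_a,D_b\subset D\setminus ab$ around $a'$ and $b'$ are chosen, every point of $ab$ separates $D$ between $D_a$ and $D_b$, and density of $\mathcal{S}$ gives $s_{n_a}\in D_a$, $s_{n_b}\in D_b$ with $ab\subset s_{n_a}s_{n_b}\subset\bigcup_{j=0}^{n}A_j$ for $n=\max(n_a,n_b)$, by the tree property. The trade-off: the paper's argument is one-shot and yields an explicit $n$ from a single pair of dense points, at the cost of the arc-extension and subdendrite construction; yours is more modular, invokes Corollary \ref{c:arcab} pointwise (once for $a$, once for $b$, once for $ab$), and delivers the paper's subsequent corollary $D\setminus E_D\subset\bigcup_{j=0}^{\infty}A_j$ for free as the first step rather than as a consequence. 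Also note that your cautionary remark about whether a non-end point ``genuinely separates $D$'' needs no extra dendrite theory here: the paper defines an end point precisely as a point whose removal leaves $D$ connected, so disconnection of $D\setminus\{a\}$ is immediate.
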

	\begin{proof} Since $a,b\in D\setminus E_D$, the arc $ab$ can be extended from both ends to an arc $a^{\prime}b^{\prime}\subset D$ so that $a^{\prime}a\cap ab=\ofc{a}$ and $ab\cap bb^{\prime}=\ofc{b}$. Let $D_a$ and $D_b$ be dendrites contained in $D\setminus ab$ such that $a^{\prime}\in\operatorname{int}(D_a)$ and $b^{\prime}\in\operatorname{int}(D_b)$. Observe that each point of $ab$ separates $D$ between $D_a$ and $D_b$. Since $\mathcal{S}$ is dense in $D$,
		there are positive integers $n_a$ and $n_b$ such that $s_{n_a}\in D_a$ and $s_{n_b}\in D_b$. Clearly, $ab\subset s_{n_a}s_{n_b}$. Set $n=\max(n_a,n_b)$. Condition \ref{prop:Ai} (\ref{prop:Ai:si}) implies that both $s_{n_a}$ and $s_{n_b}$ belong to $\bigcup_{j=0}^nA_j$. So $s_{n_a}s_{n_b}\subset\bigcup_{j=0}^nA_j$  since $\bigcup_{j=0}^nA_j$ is a tree. Consequently, $ab\subset\bigcup_{j=0}^nA_j$.
	\end{proof}
	
	\begin{corollary}
		$D\setminus E_D\subset\bigcup_{j=0}^\infty A_j$.
	\end{corollary}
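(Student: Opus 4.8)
The goal is to show $D\setminus E_D\subset\bigcup_{j=0}^\infty A_j$. The plan is to reduce this to the previous proposition by handling the degenerate case separately, since Proposition \ref{prop:arcab} produces arcs between pairs of distinct non-endpoints, but membership of a single point is really a statement about a degenerate arc.

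First I would fix an arbitrary point $x\in D\setminus E_D$ and observe that it suffices to produce a second point $b\in D\setminus E_D$ with $x\in xb$, because then Proposition \ref{prop:arcab} applied to the pair $x,b$ yields an integer $n$ with $xb\subset\bigcup_{j=0}^nA_j$, and in particular $x\in\bigcup_{j=0}^nA_j\subset\bigcup_{j=0}^\infty A_j$. So the task is simply to find such a $b$. Since $x\notin E_D$, by definition $D\setminus\ofc{x}$ is disconnected, which guarantees the existence of at least two distinct components meeting $x$ in their closures; choosing any point $b\neq x$ in one of them gives a nondegenerate arc $xb$ containing $x$ as an endpoint.

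The one point needing care is that the chosen $b$ must itself lie in $D\setminus E_D$, since Proposition \ref{prop:arcab} is stated only for pairs of non-endpoints. The hard part, if any, is this: in a dendrite with only finitely many (or even no) end points on a given branch one must be sure a non-endpoint $b$ is available. This is easily arranged by taking $b$ to be an interior point of the arc $xc$ for any $c\neq x$; an interior point of an arc is never an end point of $D$, so $b\in D\setminus E_D$ and $x\in xb$ as required. Applying Proposition \ref{prop:arcab} to $x$ and $b$ then completes the argument.

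Alternatively, and perhaps more cleanly, I would note that since $x$ is a non-endpoint, $D\setminus\ofc{x}$ has at least two components $C_1,C_2$; pick interior points of short arcs emanating into each, call them $a,b$, so that $x$ is an interior point of the arc $ab$ with $a,b\in D\setminus E_D$. Proposition \ref{prop:arcab} then gives $ab\subset\bigcup_{j=0}^nA_j$ for some $n$, and since $x\in ab$ we conclude $x\in\bigcup_{j=0}^\infty A_j$. As $x$ was arbitrary, the desired inclusion follows.
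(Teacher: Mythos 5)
Your proposal is correct and takes essentially the same route as the paper, which states the corollary as an immediate consequence of Proposition \ref{prop:arcab}; your reduction to that proposition, with the careful choice of $b$ as an interior point of an arc (hence a cut point, not an end point of $D$), is a valid way to supply the second non-endpoint. Note only that since arcs $xy$ are allowed to be degenerate in this paper, one could even apply Proposition \ref{prop:arcab} directly with $a=b=x$, but your version is equally sound.
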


\begin{corollary}\label{c:arcinopensubsD}
	For each  nonempty open set $U\subset D$ there is a nonnegative integer $i$ such that $U\cap A_i$ contains a nondegenerate arc.
	\end{corollary}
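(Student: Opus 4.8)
The plan is to reduce the statement to a finite covering argument. Given a nonempty open set $U\subseteq D$, I would first produce a \emph{nondegenerate} arc $ab$ that lies entirely inside $U$ and whose endpoints $a,b$ are \emph{not} end points of $D$. Once such an arc is in hand, Proposition~\ref{prop:arcab} applies to $a,b\in D\setminus E_D$ and traps $ab$ inside a finite union $\bigcup_{j=0}^{n}A_j$. A pigeonhole-type argument then forces one of the arcs $A_i$ to meet $ab$ in a nondegenerate subarc, which is the desired conclusion since $ab\subseteq U$.

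To build the arc, I would use that $D$ is locally connected to choose a connected open set $V$ with $\emptyset\ne V\subseteq U$. Since a dendrite is a nondegenerate continuum it has no isolated points, so $V$ contains at least two distinct points $x,y$. The key dendrite fact I would invoke is that a connected subset of a dendrite contains the (unique) arc joining any two of its points; hence $xy\subseteq V\subseteq U$. Every interior point $z$ of $xy$ separates $x$ from $y$ in $D$, so $D\setminus\{z\}$ is disconnected and $z\notin E_D$. Choosing two distinct interior points $a,b$ of $xy$ then yields a nondegenerate arc $ab\subseteq U$ with $a,b\in D\setminus E_D$.

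For the covering step, Proposition~\ref{prop:arcab} gives an integer $n\ge0$ with $ab\subseteq\bigcup_{j=0}^{n}A_j$, so $ab=\bigcup_{j=0}^{n}(ab\cap A_j)$. Because a dendrite is hereditarily unicoherent, each intersection $ab\cap A_j$ of the two subcontinua $ab$ and $A_j$ is again a subcontinuum of the arc $ab$, i.e.\ either empty, a single point, or a nondegenerate subarc. A nondegenerate arc is uncountable and cannot be written as a finite union of points, so at least one $ab\cap A_i$ must be a nondegenerate arc. Since $ab\cap A_i\subseteq U\cap A_i$, this completes the proof.

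The main obstacle, and the reason the statement is not immediate from the earlier corollary $D\setminus E_D\subseteq\bigcup_{j=0}^\infty A_j$, is the passage to non-end-point endpoints: Proposition~\ref{prop:arcab} is only available for $a,b\in D\setminus E_D$, so I must first guarantee that $U$ contains an arc with such endpoints rather than merely a single point of $\bigcup_{j=0}^\infty A_j$. The two supporting dendrite facts --- that connected subsets carry their joining arcs, and that dendrites are hereditarily unicoherent (so intersections of arcs are subarcs) --- are standard, but they are exactly what make rigorous both the ``the arc stays inside $U$'' step and the ``finite cover yields a subarc'' step.
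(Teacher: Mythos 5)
Your argument is correct, but it takes a genuinely longer route than the corollary requires; the paper states it without proof because it is immediate from density. The short derivation: by Proposition \ref{prop:Ai} (\ref{prop:Ai:si}) the dense sequence $\mathcal{S}$ is contained in $\bigcup_{j=0}^{\infty}A_j$, so this union is dense in $D$ (a fact the paper uses explicitly in the proof of Proposition \ref{p:h*}); hence any nonempty open $U$ meets some $A_i$, and then $U\cap A_i$ is a nonempty \emph{relatively open} subset of the arc $A_i$, which therefore contains a nondegenerate subarc of $A_i$ --- no arc contained in $U$, no Proposition \ref{prop:arcab}, and no unicoherence is needed. In particular the ``main obstacle'' you identify dissolves: you never need to trap a whole arc with non-end-point endpoints inside $U$, only to observe that a point of contact with some $A_i$ automatically spreads to a subarc by relative openness. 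That said, each step of your version is sound: for the arc $xy\subset V$ you only need that open connected subsets of a locally connected continuum are arcwise connected (the stronger theorem that \emph{arbitrary} connected subsets of dendrites are arcwise connected is true but overkill); interior points of an arc do separate its endpoints in $D$ and hence lie outside $E_D$; and the fact that the intersection of two subcontinua of a dendrite is empty, a point, or a nondegenerate arc is invoked verbatim by the paper in the proof of Proposition \ref{p:h*}, so your pigeonhole over $ab=\bigcup_{j=0}^{n}\of{ab\cap A_j}$ is legitimate. What your extra machinery buys is marginally more information --- a nondegenerate subarc of a prescribed arc $ab\subset U$ lying entirely in a single $A_i$, with $A_i$ among the finitely many arcs covering $ab$ --- but for the statement as given, the density argument is both shorter and independent of the nontrivial dendrite facts you import.
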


	For every arc  $L\subset D\setminus E_D$, let $\nu\of{L}$ denote the least nonnegative integer  such that $L\subset\bigcup_{j=0}^{\nu\of{L}}A_j$.
	
	\begin{proposition}\label{p:supdiam}
		Let $d_i$ be the supremum of diameters of arcs contained in $D\setminus\bigcup_{j=0}^iA_j$. Then $\lim_{i\to\infty}d_i=0$.
	\end{proposition}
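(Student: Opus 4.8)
The plan is to prove that the nonincreasing sequence $\of{d_i}$ converges to $0$ by contradiction. Write $T_i=\bigcup_{j=0}^{i}A_j$; by Proposition \ref{p:tree} each $T_i$ is a tree, and $T_i\subset T_{i+1}$, so $D\setminus T_{i+1}\subset D\setminus T_i$ and hence $d_{i+1}\le d_i$. Thus $d_i$ decreases to some $d\ge0$, and it suffices to rule out $d>0$. First I would record that, since open connected subsets of a dendrite are arcwise connected and arcs are unique, the diameter of any component $C$ of the open set $D\setminus T_i$ equals the supremum of the diameters of the arcs it contains; consequently $d_i$ is the supremum of $\operatorname{diam}(C)$ taken over the components $C$ of $D\setminus T_i$. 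Assuming $d>0$, for every $i$ I can therefore choose a component $C_i$ of $D\setminus T_i$ with $\operatorname{diam}(C_i)>d/2$. Each closure $\overline{C_i}$ is a nondegenerate subcontinuum of $D$, so by compactness of the hyperspace of subcontinua (with the Hausdorff metric) I would pass to a subsequence along which $\overline{C_{i_k}}\to K$, where $K$ is a subcontinuum with $\operatorname{diam}(K)\ge d/2>0$.

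Since $K$ is a nondegenerate subcontinuum of a dendrite it is itself a dendrite and contains a nondegenerate arc; choosing two distinct points $a,b$ in the interior of that arc gives $a,b\in D\setminus E_D$ (interior points of an arc cannot be end points of $D$) with $ab\subset K$. By Proposition \ref{prop:arcab} there is an $N$ with $ab\subset T_N$. The idea now is to use the first-point retraction $r_N\colon D\to T_N$, the continuous map sending $x$ to the unique point of $T_N$ lying on $xt$ for every $t\in T_N$ (equivalently, the first point of $T_N$ met by an arc leaving $x$). Its defining properties are that $r_N$ fixes $T_N$ pointwise and that $r_N$ is constant on every component of $D\setminus T_N$: if $x,x'$ lie in a common component then the unique arc $xx'$ misses $T_N$, and a median argument among $x,x',t$ shows $r_N(x)=r_N(x')$ for each $t\in T_N$.

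Granting this, for $i\ge N$ the connected set $C_i\subset D\setminus T_i\subset D\setminus T_N$ lies in a single component of $D\setminus T_N$, so $r_N$ is constant on $C_i$ and hence, by continuity, on $\overline{C_i}$; that is, $r_N(\overline{C_i})$ is a single point of $T_N$. Passing to the limit along the subsequence, the map induced by $r_N$ on the hyperspace is continuous and carries the singletons $r_N(\overline{C_{i_k}})$ to $r_N(K)$, and a Hausdorff limit of singletons is a singleton, so $r_N(K)$ is a single point. On the other hand $ab\subset K\cap T_N$ and $r_N$ is the identity on $T_N$, so $r_N(K)\supset r_N(ab)=ab$ is nondegenerate, a contradiction. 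Hence $d=0$ and $\lim_{i\to\infty}d_i=0$.

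I expect the main obstacle to be the properties of the retraction $r_N$: establishing that the first-point map into a subtree is well defined and continuous, and in particular that it is constant on each complementary component (equivalently, that each such component attaches to $T_N$ at a single point). These are standard features of dendrites, but the constancy on components is precisely what makes the Hausdorff-limit argument work, so I would take care to prove it via the median and arc-uniqueness structure rather than merely invoke it.
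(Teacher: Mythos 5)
Your proof is correct, and it takes a genuinely different route from the paper's. The paper also argues by contradiction but stays more elementary: assuming $d_i>\epsilon$ for all $i$, it picks in each complement $D\setminus\bigcup_{j=0}^{i}A_j$ an arc of diameter $>\epsilon$, shrunk slightly into its interior so that it avoids $E_D$; then, using the index $\nu(L)$ (the least $n$ with $L\subset\bigcup_{j=0}^{n}A_j$, which exists by Proposition \ref{prop:arcab}), it recursively chooses arcs $L_0,L_1,L_2,\dots$ with $L_k$ disjoint from $\bigcup_{j=0}^{\nu(L_{k-1})}A_j$, a set containing all the previous arcs since $\nu$ strictly increases along the sequence; this yields infinitely many pairwise disjoint arcs of diameter $>\epsilon$, which is impossible in a dendrite (dendrites are finitely Suslinian: pairwise disjoint subcontinua must have diameters tending to $0$). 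You instead extract a Hausdorff-limit continuum $K$ from the closures of large complementary components and play the first-point retraction $r_N$ onto the subtree $T_N$ against the nondegenerate arc $ab\subset K\cap T_N$ supplied by Proposition \ref{prop:arcab}: $r_N$ collapses each $\overline{C_i}$ with $i\ge N$ to a point, hence collapses $K$ to a point, while fixing $ab$ pointwise. Both arguments funnel through Proposition \ref{prop:arcab} at exactly the same spot (to trap a long arc inside some finite union $T_N$); the difference is which piece of dendrite theory carries the contradiction. The paper's choice is lighter, resting on a single classical disjoint-continua property that it invokes without proof; yours buys a conceptually clean limit argument at the cost of heavier machinery — compactness of the hyperspace, continuity of the diameter and of the induced map on hyperspaces, and the existence, continuity and component-collapsing property of the first-point retraction, the last being the subtree analogue of Proposition \ref{p:Ci}(\ref{p:Ci:pi}) and, as you correctly flag, the one fact that must genuinely be proved (your median/arc-uniqueness sketch does work, since the arc $xx'$ between two points of a common component misses $T_N$, forcing the arcs $xt$ and $x't$ to enter $T_N$ at the same point). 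One cosmetic remark: your $C_i$ collides with the paper's notation $C_i$ for the component of $D\setminus\ofc{p_i}$ containing $A_i\setminus\ofc{p_i}$, so it should be renamed if this argument were merged into the text.
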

	\begin{proof}
		Clearly, $d_i\le d_j$ for all integers $i$ and $j$ such that $0\le j\le i$.
		
		Suppose the proposition is false. Then there is a positive number $\epsilon$ such that $d_i>\epsilon$ for all $i=0,1,\dots$. It follows that, for each $i$ there is an arc $J$ contained in $D\setminus\bigcup_{j=0}^iA_j$ such that $\operatorname{diam}(J)>\epsilon$. Let $L$ be a subarc of $J$ such that $L$ is contained in the interior of $J$, but $\operatorname{diam}(L)$ is still greater than $\epsilon$. Obviously, $L\subset J\setminus E_D$. So the following statement is true.
		\begin{claim*}
			For each integer $i\ge0$, there is an arc $L\subset D\setminus\of{E_D\cup \bigcup_{j=0}^iA_j}$ such that $\operatorname{diam}(L)>\epsilon$.
		\end{claim*}
		Let $L_0\subset D\setminus E_D$ be an arc with $\operatorname{diam}(L_0)>\epsilon$.  Use the claim with $i=\nu\of{L_0}$ to get $L_1$ contained in $D\setminus \of{E_D\cup\bigcup_{j=0}^{\nu\of{L_0}}A_j}$ such that $\operatorname{diam}(L_1)>\epsilon$. Continue using the claim repeatedly to  obtain a sequence of arcs $L_1,L_2,L_3,\dots$ such that, for each positive integer $k$, $L_k\subset D\setminus \of{E_D\cup\bigcup_{j=0}^{\nu\of{L_{k-1}}}A_j}$ and $\operatorname{diam}(L_k)>\epsilon$. Observe that the arcs $L_0,L_1,L_2,L_3,\dots$ are mutually disjoint and each of them has diameter greater than $\epsilon$ which is impossible in a dendrite. This contradiction completes the proof of the proposition.
	\end{proof}
	
	For each positive integer $i$, let $l(i)$ be the least nonnegative integer such that $p_i\in A_{l(i)}$. Clearly, $i>l(i)$.
	
	For each nonnegative integer $n$ and each positive integer $i$, let $\mu(n,i)$ denote the set of those integers $j$ such that $1\le j\le i$ and $l(j)=n$. Clearly, $\mu(n,i)=\emptyset$ if $i\le n$. Additionally, set $\mu(n,0)=\emptyset$.
	
	We say that a nonnegative integer $n$ precedes $i$, and write $n\prec i$, if $l^k(i)=n$ for some positive integer $k$. If $n$ does not precede $i$ we write $n\not\prec i$.
	
	Observe that $0\prec i$ for all positive integers $i$.	
	
	\begin{proposition}\label{p:notprec}
		There are no positive integers $i$ and $j$ such that $l\of{j}\prec i\prec j$.
		In particular, if $l\of{i}=l\of{j}$ then neither $i$ precedes $j$ nor $j$ precedes $i$.
	\end{proposition}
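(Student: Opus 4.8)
The plan is to extract and exploit the single structural fact that $l$ strictly decreases its argument. Recall we already know $i>l(i)$ for every positive integer $i$, so the iterates $i>l(i)>l^2(i)>\cdots$ form a strictly decreasing sequence of nonnegative integers that must terminate at $0$. The first step I would isolate as an internal observation: the relation $\prec$ refines the natural order, i.e. if $n\prec m$ then $n<m$. This is immediate from the definition $n=l^k(m)$ for some $k\geq 1$ together with strict monotonicity, since along the strictly decreasing chain $m>l(m)>\cdots>l^k(m)=n$ every displayed iterate is defined and positive until the last, whence $n<m$.

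With that observation in hand, I would prove the main assertion by contradiction. Suppose positive integers $i,j$ satisfy $l(j)\prec i\prec j$. The relation $i\prec j$ means $i=l^k(j)$ for some $k\geq 1$, which forces the parent-or-ancestor-of-parent dichotomy: either $i=l(j)$ (the case $k=1$), or $i\prec l(j)$ (the case $k\geq 2$, since then $i=l^{k-1}(l(j))$ with $k-1\geq 1$, the intermediate iterates being positive as they exceed $i>0$). In the first case the hypothesis $l(j)\prec i$ becomes $l(j)\prec l(j)$, which is impossible because $\prec$ refines $<$ and nothing is strictly less than itself. In the second case $i\prec l(j)$ yields $i<l(j)$, whereas $l(j)\prec i$ yields $l(j)<i$; these contradict each other. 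Either way the supposition collapses.

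For the ``in particular'' clause I would deduce it directly from the statement just proved. Assume $l(i)=l(j)$ for positive integers $i,j$, and suppose toward a contradiction that $i\prec j$. Since $l(i)\prec i$ always holds (take $k=1$ in the definition of $\prec$), substituting $l(j)=l(i)$ gives $l(j)\prec i$; combined with $i\prec j$ this produces $l(j)\prec i\prec j$, contradicting the first part. The possibility $j\prec i$ is handled identically after interchanging the roles of $i$ and $j$. Hence neither $i$ precedes $j$ nor $j$ precedes $i$.

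I do not anticipate a genuine obstacle here: once the monotonicity observation $n\prec m\Rightarrow n<m$ is isolated, the remainder is pure bookkeeping on a forest ordering. The only point requiring care is that $\prec$ is defined through iterates of $l$, which is undefined at $0$; I would therefore phrase the monotonicity observation so that it quantifies only over genuinely defined iterates. This restriction costs nothing, because every occurrence of $\prec$ in the statement places a positive integer on the right-hand side, so all relevant chains are well defined down to the positive value being compared.
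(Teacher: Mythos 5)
Your proof is correct. The paper states Proposition \ref{p:notprec} without any proof, treating it as immediate from the earlier remark that $i>l(i)$; your argument---first isolating that $\prec$ refines the strict order on integers because the iterates $j>l(j)>l^2(j)>\cdots$ strictly decrease, then splitting $i\prec j$ into the cases $i=l(j)$ and $i\prec l(j)$, and finally deducing the ``in particular'' clause from $l(i)\prec i$---is exactly the bookkeeping the authors left implicit, and your precaution about $l$ being undefined at $0$ is handled correctly, since every right-hand argument of $\prec$ appearing in your chains is a positive integer.
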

	
	For each nonnegative integer $i$, let $C_i$ denote the component of $D\setminus\ofc{p_i}$ containing $A_i\setminus\ofc{p_i}$.
	
	\begin{proposition}\label{p:Ci}
		The following statements are true for each positive integer $i$.
		\begin{enumerate}
			\item $C_i$ is an open path connected set.
			\item\label{p:Ci:pi} $\operatorname{cl}\of{C_i}=C_i\cup\ofc{p_i}$.
			\item $C_i\cap\bigcup_{j=0}^{i-1} A_j=\emptyset$.
			\item Let $j$ be an integer greater than $i$. Then the following three statements are equivalent.
			\begin{itemize}
				\item $A_j\cap C_i\ne\emptyset$.
				\item $A_j\subset C_i$.
				\item $i\prec j$.
			\end{itemize}
		\end{enumerate}
	\end{proposition}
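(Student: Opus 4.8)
The plan is to prove the four assertions largely independently, the first two being soft consequences of local connectivity and unique arcwise connectivity of dendrites, and the fourth carrying the real weight. For (1), since $D\setminus\{p_i\}$ is open and $D$ is locally connected, each of its components is open, so $C_i$ is open; for arcwise (hence path) connectivity I would argue directly that for $x,y\in C_i$ the unique arc $xy\subseteq D$ cannot contain $p_i$ (otherwise $x$ and $y$ would lie in different components of $D\setminus\{p_i\}$), so $xy$ is a connected subset of $D\setminus\{p_i\}$ meeting $C_i$ and therefore $xy\subseteq C_i$. For (2), any $z\in\operatorname{cl}(C_i)\setminus C_i$ with $z\neq p_i$ would possess a connected open neighborhood contained in $D\setminus\{p_i\}$ and meeting $C_i$, which would force $z\in C_i$; hence $\operatorname{cl}(C_i)\setminus C_i\subseteq\{p_i\}$, and since $p_i$ is an endpoint of the nondegenerate arc $A_i$ it is a limit of points of $A_i\setminus\{p_i\}\subseteq C_i$, giving the claimed equality.

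For (3) I would use that $\bigcup_{j=0}^{i-1}A_j$ is a tree (Proposition \ref{p:tree}) containing $p_i$ (because $p_i\in A_{l(i)}$ with $l(i)<i$), together with $A_i\cap\bigcup_{j=0}^{i-1}A_j=\{p_i\}$. Given $a\in A_i\setminus\{p_i\}$ and $b\in\big(\bigcup_{j=0}^{i-1}A_j\big)\setminus\{p_i\}$, the subarc $ap_i$ lies in $A_i$ while the subarc $p_ib$ lies in the tree $\bigcup_{j=0}^{i-1}A_j$, and these meet only in $p_i$; by uniqueness of arcs $ab=ap_i\cup p_ib$ passes through $p_i$. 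Thus $a$ and $b$ lie in different components of $D\setminus\{p_i\}$, so $C_i$ misses $\big(\bigcup_{j=0}^{i-1}A_j\big)\setminus\{p_i\}$, and since $p_i\notin C_i$ we conclude $C_i\cap\bigcup_{j=0}^{i-1}A_j=\emptyset$.

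For (4) the implication $A_j\subseteq C_i\Rightarrow A_j\cap C_i\neq\emptyset$ is trivial. The first reduction is that, for $j>i$, $p_i\in A_j$ forces $p_i=p_j$, since $p_i\in\bigcup_{k=0}^{j-1}A_k$ while $A_j\cap\bigcup_{k=0}^{j-1}A_k=\{p_j\}$. When $p_i=p_j$ (so in particular $l(i)=l(j)$), the arcs $A_i,A_j$ meet only at $p_i$, so $A_i\setminus\{p_i\}$ and $A_j\setminus\{p_j\}$ lie in different components of $D\setminus\{p_i\}$; hence $A_j\cap C_i=\emptyset$, while Proposition \ref{p:notprec} gives $i\not\prec j$, so all three statements fail together. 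When $p_i\neq p_j$ we have $A_j\subseteq D\setminus\{p_i\}$, so connectedness of $A_j$ makes the first two statements equivalent, and it remains to match them with $i\prec j$, which I would do by strong induction on $j$. If $i\prec j$, then either $l(j)=i$, whence $p_j\in A_i\setminus\{p_i\}\subseteq C_i$ and $A_j\subseteq C_i$, or $i\prec l(j)$ with $l(j)<j$, whence the inductive hypothesis gives $A_{l(j)}\subseteq C_i$, so $p_j\in C_i$ and again $A_j\subseteq C_i$. Conversely, if $A_j\cap C_i\neq\emptyset$ then $p_j\in C_i$; setting $m=l(j)<j$ we get $p_j\in A_m\cap C_i$. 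Here $m<i$ is impossible, because $A_m\subseteq\bigcup_{k=0}^{i-1}A_k$ is disjoint from $C_i$ by part (3); if $m=i$ then $i\prec j$; and if $m>i$ the inductive hypothesis applied to $A_m\cap C_i\neq\emptyset$ yields $i\prec m=l(j)$, hence $i\prec j$.

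I expect the main obstacle to be the bookkeeping in (4): correctly isolating the degenerate coincidence $p_i=p_j$, where one must invoke Proposition \ref{p:notprec} to see that precedence fails as well, and arranging the induction so that part (3) cleanly rules out the branch $l(j)<i$. The inputs for (1)–(3) are routine consequences of the local connectivity and unique arcwise connectivity of dendrites.
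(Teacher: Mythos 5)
The paper states Proposition \ref{p:Ci} without proof, treating it as a routine consequence of the construction, so there is no written argument to compare against. Your proof is correct and supplies exactly the standard verification the omission presupposes: parts (1)--(3) follow from local connectedness and the cut-point property of dendrites as you say, and your strong induction on $j$ in part (4) — in particular the isolation of the degenerate coincidence $p_i=p_j$, where Proposition \ref{p:notprec} makes all three statements fail simultaneously, and the use of part (3) to exclude the branch $l(j)<i$ — is sound.
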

	
	\begin{theorem}[Whyburn]\label{whyburn}
		$D=E_D\cup\bigcup_{i=0}^{\infty}A_i$ and $\lim_{i\to\infty}\operatorname{diam}\of{C_i}=0$.
	\end{theorem}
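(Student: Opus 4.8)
The plan is to treat the two assertions of the theorem separately, since the first is essentially a repackaging of an earlier corollary and the second is where the real work lies.

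For the equality $D=E_D\cup\bigcup_{i=0}^{\infty}A_i$, I would simply invoke the corollary already established, namely $D\setminus E_D\subset\bigcup_{j=0}^\infty A_j$. Since each $A_j$ and $E_D$ are subsets of $D$, writing $D=(D\setminus E_D)\cup E_D$ and feeding in that inclusion gives both containments simultaneously, so this half is immediate and requires no new ideas.

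The substance is in proving $\lim_{i\to\infty}\operatorname{diam}(C_i)=0$. My strategy is to bound $\operatorname{diam}(C_i)$ by the quantity $d_{i-1}$ of Proposition~\ref{p:supdiam}, whose limit is already known to vanish. Fix a positive integer $i$ and two points $x,y\in C_i$. Since $C_i$ is connected by Proposition~\ref{p:Ci}(1) and $D$ is a dendrite, the unique arc $xy$ must lie entirely inside $C_i$. Proposition~\ref{p:Ci}(3) tells us that $C_i\cap\bigcup_{j=0}^{i-1}A_j=\emptyset$, so this arc is contained in $D\setminus\bigcup_{j=0}^{i-1}A_j$; hence its diameter, and in particular $d(x,y)$, is at most $d_{i-1}$. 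Taking the supremum over all $x,y\in C_i$ gives $\operatorname{diam}(C_i)\le d_{i-1}$, and Proposition~\ref{p:supdiam} then forces $\operatorname{diam}(C_i)\to0$ as $i\to\infty$.

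The only genuinely delicate point I anticipate is the claim that the arc $xy$ stays inside $C_i$. This rests on the standard fact that a connected subset $S$ of a dendrite contains the arc joining any two of its points: any interior point of that arc separates its endpoints in $D$, so a connected $S$ could not meet both endpoints while avoiding the separating point. I expect this to be the main obstacle to making the write-up fully airtight, since it is the one ingredient not handed to us directly by a preceding proposition; everything else reduces to bookkeeping with the results already proved.
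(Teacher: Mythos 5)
Your proposal is correct and matches the paper's (one-line) proof, which derives the theorem from exactly the same ingredients: the corollary $D\setminus E_D\subset\bigcup_{j=0}^{\infty}A_j$ for the first equality, and Propositions \ref{p:supdiam} and \ref{p:Ci} combined via the bound $\operatorname{diam}\of{C_i}\le d_{i-1}$ for the second. Your one ``delicate point'' is in fact already supplied by Proposition \ref{p:Ci}(1), which asserts that $C_i$ is path connected, so the unique arc $xy$ lies in $C_i$ by uniqueness of arcs in a dendrite (your separation argument is an equally valid substitute).
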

	\begin{proof}
		The theorem follows from Propositions \ref{p:supdiam} and \ref{p:Ci}.
	\end{proof}
	
	\begin{proposition}\label{p:h*}
		Let $h_0:\bigcup_{j=0}^\infty A_j\to\ofb{0,1}$ such that $\operatorname{diam}\of{h_0\of{A_i}}\le 2^{-i}$ and $h_0$ is continuous on $\bigcup_{j=0}^{i}A_j$ for all nonnegative integer $i$.
		Then there is a unique extension of $h_0$ to a continuous mapping $h:D\to\ofb{0,1}$.
	\end{proposition}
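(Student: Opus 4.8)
The plan is to construct the extension as a uniform limit of continuous maps obtained by composing $h_0$ with first-point retractions onto the finite subtrees $T_i:=\bigcup_{j=0}^{i}A_j$, and to obtain uniqueness for free from density.

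First I would record that $\bigcup_{j=0}^{\infty}A_j$ is dense in $D$. Every point interior to an arc $ey$ with $e\in E_D$ fails to be an end point, since removing it separates the two halves of the arc; hence $D\setminus E_D$ is dense, and as $D\setminus E_D\subset\bigcup_{j=0}^{\infty}A_j$ by the corollary above, density follows. Consequently any two continuous extensions of $h_0$ to $D$ agree on a dense set and therefore coincide, which settles uniqueness at once.

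For existence, each $T_i$ is a subtree of $D$ by Proposition \ref{p:tree}, hence a subcontinuum, so the first-point retraction $r_i:D\to T_i$ (sending $x$ to the first point of $T_i$ encountered along the arc from $x$ to $T_i$) is continuous; this is a standard property of dendrites (see \cite{Nadler}). Since $h_0$ is continuous on $T_i$ by hypothesis, the composite $h_0\circ r_i:D\to\ofb{0,1}$ is continuous. I would then define $h(x)=\lim_{i\to\infty}h_0\of{r_i(x)}$ and verify the limit exists. The crucial observation is that $r_i(x)$ and $r_{i+1}(x)$ always lie in the common arc $A_{i+1}$: because $A_{i+1}\cap T_i=\ofc{p_{i+1}}$ (Proposition \ref{prop:Ai}), either $r_{i+1}(x)\in T_i$, in which case the arc from $x$ to $r_{i+1}(x)$ meets $T_i$ only at that point and so $r_{i+1}(x)=r_i(x)$, or else $r_{i+1}(x)\in A_{i+1}\setminus\ofc{p_{i+1}}$, in which case the arc from $x$ to $T_i$ must pass through $p_{i+1}$, forcing $r_i(x)=p_{i+1}\in A_{i+1}$. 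In both cases the diameter hypothesis yields $\ofa{h_0\of{r_i(x)}-h_0\of{r_{i+1}(x)}}\le\operatorname{diam}\of{h_0\of{A_{i+1}}}\le 2^{-(i+1)}$.

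From this telescoping bound the sequence $\of{h_0\of{r_i(x)}}_i$ is Cauchy uniformly in $x$, with $\ofa{h_0\of{r_i(x)}-h(x)}\le\sum_{k>i}2^{-k}=2^{-i}$; hence $h_0\circ r_i\to h$ uniformly on $D$, and $h$ is continuous as a uniform limit of continuous maps. Finally $h$ extends $h_0$, since for $x\in\bigcup_{j=0}^{\infty}A_j$ one has $x\in T_i$ and $r_i(x)=x$ for all large $i$, whence $h(x)=h_0(x)$. I expect the only genuinely delicate step to be controlling the retractions $r_i$—in particular establishing the dichotomy $r_{i+1}(x)=r_i(x)$ versus $r_i(x)=p_{i+1}$ that places both points in $A_{i+1}$—after which the convergence and continuity of $h$ follow routinely from the geometric $2^{-i}$ estimate.
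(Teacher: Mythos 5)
Your proof is correct, but it takes a genuinely different route from the paper's. The paper works pointwise: for each $x\in D$ it constructs nested continua $K_i\of{x}$ containing $x$ in their interiors on which $h_0$ oscillates by at most $2^{-i}$; the heart of that argument is a chain decomposition of an arbitrary arc $ab$ into the pieces $L_j=ab\cap A_j$ (together with a small tree $T\subset\bigcup_{j=0}^{i+1}A_j$), ordering a minimal subfamily of these pieces into a chain and summing $\operatorname{diam}\of{h_0\of{L_j}}\le 2^{-j}$ to get $\ofa{h_0\of{a}-h_0\of{b}}\le 2^{-i}$; then $h\of{x}$ is defined as the single point of $\bigcap_i\operatorname{cl}\of{H_i\of{x}}$ and continuity is checked directly using the $K_i\of{x}$ as neighborhoods. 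You instead use the first-point retractions $r_i:D\to T_i$ onto the trees $T_i=\bigcup_{j=0}^{i}A_j$, obtaining $h$ as a uniform limit of the continuous maps $h_0\circ r_i$. Your key dichotomy is sound and is the right delicate point: if $r_{i+1}\of{x}\in A_{i+1}\setminus\ofc{p_{i+1}}$, then the concatenation of the subarc of $A_{i+1}$ from $r_{i+1}\of{x}$ to $p_{i+1}$ with the arc $p_{i+1}r_i\of{x}\subset T_i$ meets only at $p_{i+1}$ (since $A_{i+1}\cap T_i=\ofc{p_{i+1}}$ by Proposition \ref{prop:Ai}), so by unique arcwise connectedness $p_{i+1}$ lies on $x\,r_i\of{x}$, which meets $T_i$ only at $r_i\of{x}$, forcing $r_i\of{x}=p_{i+1}$; hence both retraction points lie in $A_{i+1}$ and the $2^{-(i+1)}$ bound applies. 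The trade-offs: your argument is shorter and structurally cleaner, but it imports an external ingredient --- continuity of the first-point map onto a subcontinuum of a dendrite (standard, since dendrites are smooth; see \cite{Nadler}) --- which the paper's self-contained chain-sum argument avoids, while your uniform Cauchy estimate yields the quantitative bonus $\ofa{h_0\of{r_i\of{x}}-h\of{x}}\le 2^{-i}$. Two small remarks: your density argument for $\bigcup_{j}A_j$ via $D\setminus E_D$ works but is a detour, as density is immediate from Proposition \ref{prop:Ai}(\ref{prop:Ai:si}) because the dense sequence $\mathcal{S}$ lies in $\bigcup_j A_j$; and note that your proof, like the paper's, uses only continuity of $h_0$ on each $T_i$ (not on the full union), exactly as hypothesized.
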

	\begin{proof}
		\begin{claim*}\label{p:h*:cl}
			For each $x\in D$ and each nonnegative integer $i$ there is a continuum $K_i\of{x}\subset B_D\of{x,2^{-i}}$ containing $x$ in its interior such that $\ofa{h_0\of{a}-h_0\of{b}}\le 2^{-i}$ for all $a,b\in K_i\of{x}\cap\bigcup_{j=0}^\infty A_j$.
		\end{claim*}
		\begin{proof}[Proof of the claim]\renewcommand{\qedsymbol}{} If $x\notin\bigcup_{j=0}^{i+1}A_j$, set $T=\emptyset$. Otherwise, let $T\subset \bigcup_{j=0}^{i+1}A_j$ be a tree containing $x$ in its interior with respect to $\bigcup_{j=0}^{i+1}A_j$, and such that $\operatorname{diam}\of{h_0\of{T}}\le2^{-(i+1)}$. Clearly, $Z=\operatorname{cl}\of{\bigcup_{j=0}^{i+1}A_j\setminus T}$ is a compact set not containing $x$. Let $K_i\of{x}\subset B_D\of{x,2^{-i}}\setminus Z$ be a continuum such that $x\in\operatorname{int}\of{K_i\of{x}}$.
			Take any two points $a,b\in K_i\of{x}\cap \bigcup_{j=0}^\infty A_j$. To prove the claim it remains to prove that $\ofa{h_0\of{a}-h_0\of{b}}\le 2^{-i}$.
			
			There is an integer $k>i+1$ such that $ab\subset \bigcup_{j=0}^{k}A_j$, see Corollary \ref{c:arcab}. Since $ab\subset K_i\of{x}\subset D\setminus Z$, we get the result that $ab\subset T\cup \bigcup_{j=i+2}^{k}A_j$. Set $L_{i+1}=ab\cap T$, $L_{i+2}=ab\cap A_{i+2}$, $L_{i+3}=ab\cap A_{i+3}$, \dots, $L_{k}=ab\cap A_{k}$.
			Observe that $\operatorname{diam}\of{h_0\of{L_{j}}}\le2^{-j}$ for all $j=i+1,\dots,k$. Thus
			\begin{equation*}\label{p:h*:cl:eq1}\tag{$*$}
				\sum_{j=i+1}^{k}\operatorname{diam}\of{h_0\of{L_{j}}}\le\sum_{j=i+1}^{k}2^{-j}<\sum_{j=i+1}^{\infty}2^{-j}=2^{-i}
			\end{equation*}
			
			Clearly, $\bigcup_{j=i+1}^{k}L_j=ab$.
			Let $M$ be a subset of $\ofc{i+1,i+2,\dots,k}$ minimal with respect to the property $\bigcup_{j\in M}L_j=ab$. Let $m$ denote the number of elements of $M$. Since the intersection of an arc with a continuum, both contained in a dendrite, is either the empty set, or a point, or a nondegenerate arc, we infer that $L_j$ is a nondegenerate arc for each $j\in M$. Since $\bigcup_{j=j+1}^{k}L_j=ab$ is connected, there is a one-to-one function of $\ofc{1,\dots,m}$ onto $M$ such that $a\in L_{\sigma\of{1}}$ and $L_{\sigma\of{n}}\cap\of{\bigcup_{j=1}^{n-1}L_{\sigma\of{j}}}\ne\emptyset$ for all $n=2,\dots,m$. It follows from the minimality of $M$ that $b\in L_{\sigma\of{m}}$ and
			$L_{\sigma\of{j}}\cap L_{\sigma\of{n}}\ne\emptyset$ if and only if $\ofa{n-j}\le 1$ for all $j,n=1,\dots,m$.
			Consequently, $\ofa{h_0\of{a}-h_0\of{b}}\le \sum_{j=1}^m\operatorname{diam}\of{h_0\of{L_{\sigma\of{j}}}}\le\sum_{j=i+1}^{k}\operatorname{diam}\of{h_0\of{L_{j}}}$. Thus, it follows from \eqref{p:h*:cl:eq1} that $\ofa{h_0\of{a}-h_0\of{b}}<2^{-i}$ and the Claim is true.
		\end{proof}
		
		For an arbitrary point $x\in D$ and an arbitrary nonnegative integer $i$, let $K_i\of{x}$ be the continuum defined in the Claim. Observe that $K^{\prime}_i\of{x}= \bigcap_{j=0}^iK_j\of{x}$ is a continuum containing $x$ in its interior. So, we may replace $K_i\of{x}$ in the claim by $K^{\prime}_i\of{x}$ and have the additional property that $K_{i+1}\of{x}\subset K_i\of{x}$ for each nonnegative integer $i$.
		
		$K_i\of{x}\cap\bigcup_{j=0}^\infty A_j\ne\emptyset$ because $K_i\of{x}$ has nonempty interior and $\bigcup_{j=0}^\infty A_j$ is dense in $D$. So, $H_i\of{x}=h_0\of{K_i\of{x}\cap\bigcup_{j=0}^\infty A_j}$ is not empty. It follows from the choice of $K_i\of{x}$ that $H_{i+1}\of{x}\subset H_i\of{x}$ and $\operatorname{diam}\of{H_i\of{x}}\le2^{-i}$.
		Consequently, $\operatorname{cl}\of{H_i\of{x}}\subset\ofb{0,1}$ is a closed nonempty set, $\operatorname{cl}\of{H_{i+1}\of{x}}\subset \operatorname{cl}\of{H_i\of{x}}$ and $\operatorname{diam}\of{\operatorname{cl}\of{H_i\of{x}}}\le2^{-i}$ for all nonnegative $i$. It follows that $ \bigcap_{j=0}^{\infty} \operatorname{cl}\of{H_j\of{x}}$ is a single point. We denote this point by $h\of{x}$. Clearly, $h\of{x}\in \operatorname{cl}\of{H_j\of{x}}$ for all nonnegative integers $j$.
		
		We will show that $h$ is continuous. Take an arbitrary point $x\in D$ and a positive number $\epsilon$. We will show that there is an open neighborhood $U$ of $x$ in $D$ such that $\ofa{h\of{z}-h\of{x}}<\epsilon$ for each $z\in U$. Let $i$ be a nonnegative integer such that $2^{-i}<\epsilon$. Set $U=\operatorname{int}\of{K_i\of{x}}$ and take an arbitrary point $z\in U$. There is an integer $n$ such that $B_D\of{z,2^{-n}}\subset U=\operatorname{int}\of{K_i\of{x}}\subset K_i\of{x}$. Hence $K_n\of{z}\subset K_i\of{x}$. It follows that $H_n\of{z}= h_0\of{K_n\of{z}\cap\bigcup_{j=0}^\infty A_j} \subset h_0\of{K_i\of{x}\cap\bigcup_{j=0}^\infty A_j}=H_i\of{x}$.
		So, $h\of{z}\in \operatorname{cl}\of{H_n\of{z}} \subset \operatorname{cl}\of{H_i\of{x}}$. Since $\operatorname{diam}\of{\operatorname{cl}\of{H_i\of{x}}}\le2^{-i}$ and both $h\of{z}$ and $h\of{x}$ belong to $\operatorname{cl}\of{H_i\of{x}}$, we have the result $\ofa{h\of{z}-h\of{x}}\le 2^{-i}<\epsilon$. Hence, $h$ is continuous.
		
		Finally, we must observe that $h$ is an extension of $h_0$. Suppose that $x\in \bigcup_{j=0}^\infty A_j$. Then $x\in K_i\of{x}\cap\bigcup_{j=0}^\infty A_j$ for each nonnegative integer $i$. It follows that $h_0\of{x}\in H_i\of{x}$ for all $i\ge 0$.  Consequently, $\bigcap_{i=0}^{\infty} \operatorname{cl}\of{H_i\of{x}}=\ofc{h_0\of{x}}$ and, therefore, $h\of{x}=h_0\of{x}$. The extension is unique since it is continuous and 
		$\bigcup_{j=0}^\infty A_j$ is dense in $D$.
	\end{proof}
	
	\section{Factorization Lemma and the Small Folds Property}\label{factor}
	\begin{equation*}
		\begin{tikzcd}[baseline=\the\dimexpr\fontdimen22\textfont2\relax]
			&  & X \ar[ld,"g"]& X \ar[l,dashed,"h\circ g"]\ar[ld,"g"] & X \ar[l,dashed,"h\circ g"]\ar[ld,"g"]& X \ar[l,dashed,"h\circ g"]\ar[ld,"g"]& X \ar[l,dashed,"h\circ g"]\ar[ld,"g"]&\ar[l, dotted]\\
			& Y & Y \ar[u,"h"] \ar[l,dashed,"g\circ h"] & Y \ar[u,"h"]\ar[l,dashed,"g\circ h"] & Y \ar[u,"h"]\ar[l,dashed,"g\circ h"]& Y \ar[u,"h"]\ar[l,dashed,"g\circ h"] &\ar[l, dotted]\\
		\end{tikzcd}
	\end{equation*}
	\begin{proposition}\label{p:hg-gh}
		Let $X$ and $Y$ be two compact spaces, and let $g:X\to Y$ and $h:Y\to X$ be two continuous mappings. Then $\varprojlim\of{X,h\circ g}$ and $\varprojlim\of{Y,g\circ h}$ are homeomorphic. Moreover, the following statements are true.
\begin{enumerate}
  \item\label{p:hg-gh:tr} Suppose $g$ is a surjection and $h\circ g$ is transitive on $X$.  Then $g\circ h$ is transitive on $Y$.
  \item\label{p:hg-gh:tm} Suppose $g$ is a surjection and $h\circ g$ is topologically mixing on $X$.  Then $g\circ h$ is topologically mixing on $Y$.
  \item\label{p:hg-gh:nnk2p} Suppose $g$ is a surjection, $\operatorname{int}_X\of{h(U)}\ne\emptyset$ for each nonempty open set $U\subset Y$, and $h\circ g$ is topologically exact on $X$.  Then $g\circ h$ is topologically exact on $Y$.
\end{enumerate}

	\end{proposition}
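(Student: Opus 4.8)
The plan is to build the homeomorphism explicitly from $g$ and $h$ applied coordinatewise. I would define
$\Phi:\varprojlim\of{X,h\circ g}\to\varprojlim\of{Y,g\circ h}$ by $\Phi\of{\of{x_i}_i}=\of{g(x_i)}_i$, and symmetrically
$\Psi:\varprojlim\of{Y,g\circ h}\to\varprojlim\of{X,h\circ g}$ by $\Psi\of{\of{y_i}_i}=\of{h(y_i)}_i$. First I would verify these land in the correct inverse limits: if $x_i=(h\circ g)(x_{i+1})$ then $g(x_i)=g\of{h\of{g(x_{i+1})}}=(g\circ h)\of{g(x_{i+1})}$, so $\Phi$ is well defined, and the computation for $\Psi$ is identical with the roles of $g$ and $h$ swapped. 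Both maps are continuous since each coordinate is a projection followed by $g$ (respectively $h$).

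The subtle point, and the one I expect to be the main conceptual obstacle, is that $\Phi$ and $\Psi$ are \emph{not} mutually inverse. Computing $\Psi\circ\Phi\of{\of{x_i}_i}=\of{(h\circ g)(x_i)}_i$ and using $(h\circ g)(x_{i+1})=x_i$, one finds this equals $\of{(h\circ g)(x_0),x_0,x_1,\dots}$, which is precisely the shift (natural extension) $\sigma_{h\circ g}$; likewise $\Phi\circ\Psi=\sigma_{g\circ h}$. Since the shift on any inverse limit is a homeomorphism, both compositions are homeomorphisms, so $\Phi$ is injective (because $\Psi\circ\Phi$ is) and surjective (because $\Phi\circ\Psi$ is). A continuous bijection between compact metric spaces is a homeomorphism, which proves the first claim. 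As a bonus, a one-line check gives $\Phi\circ\sigma_{h\circ g}=\sigma_{g\circ h}\circ\Phi$, so $\Phi$ in fact conjugates the two natural extensions, which is the statement the introduction relies on.

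For the three transfer statements the engine is the algebraic identity $g\circ(h\circ g)^n=(g\circ h)^n\circ g$, proved by a one-step induction. For transitivity, given nonempty open $U,V\subset Y$, I would pass to the preimages $g^{-1}(U),g^{-1}(V)$, which are open and nonempty since $g$ is a surjection; transitivity of $h\circ g$ gives an $n$ and a point $x\in g^{-1}(U)$ with $(h\circ g)^n(x)\in g^{-1}(V)$, and then $y=g(x)\in U$ satisfies $(g\circ h)^n(y)=g\of{(h\circ g)^n(x)}\in V$ by the identity, so $(g\circ h)^n(U)\cap V\ne\emptyset$. The mixing statement is verbatim the same argument carrying the ``for all $n>N$'' quantifier through unchanged. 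For topological exactness, the extra hypothesis is exactly what is needed: set $W=\operatorname{int}_X\of{h(U)}$, which is nonempty by assumption; exactness of $h\circ g$ yields $n$ with $(h\circ g)^n(W)=X$; applying the surjection $g$ and the identity gives $Y=g\of{(h\circ g)^n(W)}=(g\circ h)^n\of{g(W)}$; finally $W\subset h(U)$ forces $g(W)\subset(g\circ h)(U)$, whence $Y\subset(g\circ h)^{n+1}(U)$ and equality follows.

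The only real care-points, beyond recognizing that the coordinatewise maps compose to the shift rather than the identity, are bookkeeping ones: keeping the index shift to $n+1$ straight in the exactness argument, and noticing that the interior hypothesis in part~(\ref{p:hg-gh:nnk2p}) is precisely what guarantees a nonempty open set $W$ on which to invoke exactness. Everything else is formal manipulation resting on the single conjugacy-type identity above.
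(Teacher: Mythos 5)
Your proof is correct, and it differs from the paper's in two genuine ways, both to your credit. For the homeomorphism, the paper interleaves the two systems into a single inverse sequence $X \xleftarrow{h} Y \xleftarrow{g} X \xleftarrow{h} Y \xleftarrow{g} \cdots$ and invokes \cite[Corollary 2.5.11]{engelking} twice: restricting threads to the even coordinates is a homeomorphism onto $\varprojlim\of{X,h\circ g}$, and to the odd coordinates a homeomorphism onto $\varprojlim\of{Y,g\circ h}$. Your coordinatewise maps $\Phi$ and $\Psi$ are in effect the composition of those two restriction maps made explicit, and your observation that $\Psi\circ\Phi=\sigma_{h\circ g}$ and $\Phi\circ\Psi=\sigma_{g\circ h}$ is exactly the right mechanism; note that you do not even need the compactness step at the end, since $\Psi\circ\sigma_{g\circ h}^{-1}=\sigma_{h\circ g}^{-1}\circ\Psi$ is an explicit continuous two-sided inverse of $\Phi$. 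What your route buys is precisely the identity $\Phi\circ\sigma_{h\circ g}=\sigma_{g\circ h}\circ\Phi$: Proposition \ref{p:hg-gh} as stated asserts only that the inverse limits are homeomorphic, yet the Introduction, Theorem \ref{t:mainresult} and Theorem \ref{t:mainfactorization} all rely on the conjugacy of the natural extensions, which the paper's proof via Engelking never verifies explicitly; your map supplies it in one line, and is self-contained besides. Second, for statement (\ref{p:hg-gh:tr}) the paper argues through a dense orbit (transitivity of $h\circ g$ yields a point $x$ with dense orbit, and $g\of{x}$ then has dense orbit under $g\circ h$), which tacitly uses the equivalence of open-set transitivity --- the paper's definition --- with the existence of a dense orbit; your direct argument with $g^{-1}\of{U}$ and $g^{-1}\of{V}$ avoids that equivalence (whose ``dense orbit implies transitive'' half requires some care, e.g.\ in spaces with isolated points) and makes (\ref{p:hg-gh:tr}) literally the same computation as (\ref{p:hg-gh:tm}), where the paper's proof and yours coincide. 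Your proof of (\ref{p:hg-gh:nnk2p}) also matches the paper's exactly: set $W=\operatorname{int}_X\of{h\of{U}}$, apply exactness of $h\circ g$ to $W$, push forward by the surjection $g$, and absorb $g\of{W}\subset\of{g\circ h}\of{U}$ into the exponent $n+1$.
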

	\begin{proof}
		Consider the sequence $\of{Z_i}_{i=1}^\infty$ where $Z_i=X$ for even $i$ and $Z_i=Y$ odd $i$. Let $f_i:Z_{i+1}\to Z_i$ be $h$ if $i$ is even and $g$ if $i$ is odd.
		Observe that restricting all threads $\of{z_i}_{i=0}^\infty\in\varprojlim\of{Z_i,f_i}$ to even terms results in all threads belonging to $\varprojlim\of{X,h\circ g}$. Since such restriction is a homeomorphism between the corresponding inverse limits, see \cite[Corollary 2.5.11]{engelking}, $\varprojlim\of{X,h\circ g}$ and $\varprojlim\of{Z_i,f_i}$ are homeomorphic. Similarly, $\varprojlim\of{Y,g\circ y}$ and $\varprojlim\of{Z_i,f_i}$ are homeomorphic, since restricting $\of{z_i}_{i=0}^\infty$ to odd terms results in all threads belonging to $\varprojlim\of{Y,g\circ h}$. Hence, $\varprojlim\of{X,h\circ g}$ and $\varprojlim\of{Y,g\circ h}$ are homeomorphic.
		
Suppose that assumptions of the statement (\ref{p:hg-gh:tr}) are satisfied. Then there is $x\in X$ such that $\of{\of{h\circ g}^i\of{x}}_{i=1}^\infty$ is dense in $X$. Observe that
		$g\of{\of{h\circ g}^i\of{x}}=\of{g\circ h}^i\of{g\of{x}}$ for each positive integer $i$. Since $g$ is a surjection, the image of a dense set in $X$ is dense in $Y$. Consequently, $\of{g\of{\of{h\circ g}^i\of{x}}}_{i=1}^\infty=\of{\of{g\circ h}^i\of{g\of{x}}}_{i=1}^\infty$ is dense in $Y$. So, the orbit of $g\of{x}$ under $g\circ h$ is dense in $Y$. Thus, $g\circ h$ is transitive on $Y$ and the statement (\ref{p:hg-gh:tr}) is true.

Now, suppose that assumptions of the statement (\ref{p:hg-gh:tm}) are satisfied. Let $U$ and $V$ be arbitrary open nonempty subsets of $Y$. Clearly, $g{}^{-1}\of{U}$ and $g{}^{-1}\of{V}$ are open nonempty subsets of $X$. Also, $g\of{g{}^{-1}\of{U}}=U$ and $g\of{g{}^{-1}\of{V}}=V$. Since $h\circ g$ is topologically mixing, there exists a number $N$ such that $\of{h\circ g}^i\of{g{}^{-1}\of{U}}\cap g{}^{-1}\of{V}\ne\emptyset$ for all $i>N$. So,
\begin{equation*}
  g\of{\of{h\circ g}^i\of{g{}^{-1}\of{U}}\cap g{}^{-1}\of{V}} \ne\emptyset \quad \text{for all $i>N$.}
\end{equation*}
Since $g\of{A\cap B}\subset g\of{A}\cap g\of{B}$ for all $A,B\subset X$, we infer that
\begin{equation*}
  g\of{\of{h\circ g}^i\of{g{}^{-1}\of{U}}}\cap g\of{g{}^{-1}\of{V}} \ne\emptyset \quad \text{for all $i>N$.}
\end{equation*}
Since $g\of{\of{h\circ g}^i\of{g{}^{-1}\of{U}}}=\of{g\circ h}^i\of{g\of{g{}^{-1}\of{U}}}=\of{g\circ h}^i\of{U}$ and $g\of{g{}^{-1}\of{V}} = V$, we get the result that
$\of{g\circ h}^i\of{U}\cap V \ne\emptyset$ for all $i>N$. Hence, the statement (\ref{p:hg-gh:tm}) is true.

Finally, suppose that assumptions of the statement (\ref{p:hg-gh:nnk2p}) are satisfied. Let $U$ be an arbitrary nonempty open subset of $Y$. Then $V=\operatorname{int}_X\of{h(U)}$ is a nonempty open subset of $X$. Since  $h\circ g$ is topologically exact on $X$, there is a positive integer $i$ such that $\of{h\circ g}^i\of{V}=X$. It follows that $g\circ\of{h\circ g}^i\of{V}=g\of{X}=Y$ since $g$ is a surjection. Since $V\subset h\of{U}$, we infer that  $\of{g\circ h}^{i+1}\of{U}=g\circ\of{h\circ g}^i\of{h\of{U}}\supset g\circ\of{h\circ g}^i\of{V}=Y$.
Consequently,   $\of{g\circ h}^{i+1}\of{U}=Y$ and $g\circ h$ is topologically exact on $Y$.
\end{proof}
	
	\begin{proposition}\label{p:weakconf}
		Let $f$ be a continuous real function defined on an interval $I$. Suppose $a,b\in f\of{I}$ and $a\ne b$.
		Then there are points $c,d\in I$ such that $f(c)=a$, $f(d)=b$ and $f(t)\in\of{a,b}$ for each $t\in\of{c,d}$.
	\end{proposition}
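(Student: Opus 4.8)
The plan is to reduce the statement to a single ``first crossing'' between the two levels $a$ and $b$, and then to invoke the intermediate value theorem to prevent the function from overshooting. Since the intervals $\of{a,b}$ and $\of{c,d}$ are understood in the order-agnostic sense of Section \ref{s:prelim}, interchanging $a$ with $b$ (and correspondingly $c$ with $d$) does not affect the conclusion, so I may assume $a<b$. Choosing $c_0,d_0\in I$ with $f\of{c_0}=a$ and $f\of{d_0}=b$, a reflection of the parametrization of $I$ lets me also assume $c_0<d_0$, so that the entire argument takes place inside the compact interval $\ofb{c_0,d_0}$.

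Next I would pin down the endpoints $c$ and $d$ as extremal preimages, in a carefully nested order. By continuity the set $Z=\ofc{t\in\ofb{c_0,d_0}:f\of{t}=b}$ is closed and nonempty (it contains $d_0$), so it has a least element $d=\min Z$; thus $f\of{d}=b$ while $f\of{t}\ne b$ for every $t\in\ofb{c_0,d_0}$ with $t<d$. Likewise $\ofc{t\in\ofb{c_0,d}:f\of{t}=a}$ is closed and nonempty (it contains $c_0$), so it has a greatest element $c$; thus $f\of{c}=a$ while $f\of{t}\ne a$ for every $t\in\of{c,d}$. Since $f\of{c}=a\ne b=f\of{d}$ we have $c<d$, and by construction $f\of{t}\notin\ofc{a,b}$ for all $t\in\of{c,d}$.

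The only real content is to upgrade ``$f\of{t}\notin\ofc{a,b}$'' to ``$f\of{t}\in\of{a,b}$'' on $\of{c,d}$, which I would handle by contradiction. If some $t_0\in\of{c,d}$ had $f\of{t_0}>b$, then applying the intermediate value theorem to $f$ on $\ofb{c,t_0}$, where $f\of{c}=a<b<f\of{t_0}$, produces $s\in\of{c,t_0}$ with $f\of{s}=b$; but then $s<d$ would contradict $d=\min Z$. Symmetrically, if some $t_0\in\of{c,d}$ had $f\of{t_0}<a$, the intermediate value theorem applied to $f$ on $\ofb{t_0,d}$, where $f\of{t_0}<a<b=f\of{d}$, yields $s\in\of{t_0,d}$ with $f\of{s}=a$ and $s>c$, contradicting the maximality of $c$. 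Hence $a<f\of{t}<b$ for every $t\in\of{c,d}$, as required.

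The step I expect to require the most care is not the inequalities themselves but the \emph{ordering} of the two selections: one must choose $d$ first, as the earliest point at level $b$, and only then choose $c$ as the latest point at level $a$ lying before $d$. This nesting is precisely what makes the minimality of $d$ and the maximality of $c$ simultaneously available in the two overshoot cases; reversing the order of the two choices would break the argument.
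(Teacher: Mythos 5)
Your proof is correct and takes essentially the same route as the paper: the paper also fixes arbitrary preimages $c_0,d_0$, lets $d$ be the first point of the oriented interval $\ofb{c_0,d_0}$ with $f\of{d}=b$ and then $c$ the last point of $\ofb{c_0,d}$ with $f\of{c}=a$, leaving the intermediate-value verification that you spell out implicit. One small quibble with your closing aside: the nesting is not actually essential, since choosing $c$ first as the last $a$-preimage in all of $\ofb{c_0,d_0}$ and then $d$ as the first $b$-preimage in $\ofb{c,d_0}$ also makes both extremality properties available on $\of{c,d}$, and the same two IVT contradictions go through.
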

	\begin{proof}
		Let $c_o,d_0\in I$ be such that $f\of{c_0}=a$ and $f\of{d_0}=b$. Let $d$ be the first point in the oriented interval $\ofb{c_0,d_0}$ such that $f\of{d}=b$. Finally, let $c$ be the last point in the oriented interval $\ofb{c_0,d}$ such that $f\of{c}=a$.
	\end{proof}
	
	\begin{definition}[see {\cite[p 1166]{minc-transue}}]
		Let $f:\ofb{0, 1}\to\ofb{0, 1}$ be a continuous function. Let $a$ and $b$ be
		two points of the interval $\ofb{0, 1}$, and let $\delta$ be a positive number. We say
		that \emph{$f$ is $\delta$-crooked between $a$ and $b$} if, for every two points $c,d\in\ofb{0, 1}$
		such that $f(c) = a$ and $f(d) = b$ , there is a point $c^{\prime}$ between $c$ and $d$
		and there is a point $d^{\prime}$ between $c^{\prime}$ and $d$ such that $\ofa{b - f(c^{\prime})} \le\delta$ and
		$\ofa{a - f(d^{\prime})} \le\delta$. We say that \emph{$f$ is $\delta$-crooked} if it is $\delta$-crooked between
		every pair of points.
	\end{definition}
	
	\begin{definition}\label{def:smallfolds}
		Let $f:\ofb{0, 1}\to\ofb{0, 1}$ be a continuous function. We say that $f$ has \emph{the small folds property} if, for every positive number $\lambda<1$, there exist positive numbers $\beta<\lambda$ and $\xi<\beta/4$ satisfying the following condition
		\begin{equation*}
			\text{for every $a$ and $b$ such that} \ofa{a - b} < \beta \text{, $f$ is $\xi$-crooked between $a$ and $b$.}
		\end{equation*}
	\end{definition}
	
	\begin{lemma}[Factorization Lemma]\label{l:factf}
		Let $f:\ofb{0, 1}\to\ofb{0, 1}$ be a continuous surjection with the small folds property and let $D$ be a dendrite.
		Then there are continuous surjections $g:\ofb{0,1}\to D$ and $h:D\to\ofb{0,1}$ such that $h\circ g=f$ and $\operatorname{int}_{\ofb{0,1}}\of{h(U)}\ne\emptyset$ for each nonempty open set $U\subset D$.
	\end{lemma}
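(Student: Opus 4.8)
The plan is to build $h$ from a Whyburn decomposition of $D$ and then to construct $g$ as a uniform limit of lifts of $f$ into the finite trees that exhaust $D$, using the small folds property to route those lifts through ever deeper arcs. First I would fix the decomposition $D=E_D\cup\bigcup_{i=0}^\infty A_i$ of Theorem \ref{whyburn}, with $A_i=p_iq_i$, $A_i\cap\bigcup_{j<i}A_j=\ofc{p_i}$, and $p_i\in A_{l(i)}$. I define $h_0$ on $\bigcup_jA_j$ by sending $A_0$ affinely onto $\ofb{0,1}$ and, for $i\ge1$, sending $A_i$ homeomorphically onto a closed interval $I_i\subset\ofb{0,1}$ of length at most $2^{-i}$ having the already-defined value $h_0\of{p_i}$ as an endpoint. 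Then $\operatorname{diam}\of{h_0\of{A_i}}\le2^{-i}$ and $h_0$ is continuous on each $\bigcup_{j\le i}A_j$ (the values agree at the points $p_i$), so Proposition \ref{p:h*} extends $h_0$ uniquely to a continuous $h:D\to\ofb{0,1}$, which is onto because $h\of{A_0}=\ofb{0,1}$. For the interior condition, Corollary \ref{c:arcinopensubsD} puts a nondegenerate subarc of some $A_i$ inside any nonempty open $U\subset D$; as $h|_{A_i}$ is a homeomorphism onto $I_i$, its image is a nondegenerate interval, so $\operatorname{int}_{\ofb{0,1}}\of{h\of{U}}\ne\emptyset$.

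To build $g$, write $T_n=\bigcup_{j=0}^nA_j$ and construct maps $g_n:\ofb{0,1}\to T_n$ with $h\circ g_n=f$ and $g_n\of{\ofb{0,1}}=T_n$, starting from $g_0=\of{h|_{A_0}}^{-1}\circ f$ (onto $A_0$, with $h\circ g_0=f$). Passing from $g_n$ to $g_{n+1}$ amounts to covering $A_{n+1}$, attached at $p_{n+1}\in A_{l(n+1)}$. Since lifting into $A_{n+1}$ forces $f$ to take values in $I_{n+1}=h\of{A_{n+1}}$, I would search inside the short subintervals on which $g_n$ already lies on $A_{l(n+1)}$ near $p_{n+1}$ for a point $t^*$ with $g_n\of{t^*}=p_{n+1}$, hence $f\of{t^*}=h\of{p_{n+1}}$, and then select a short interval $\ofb{c,d}\ni t^*$ with $g_n\of{c}=g_n\of{d}=p_{n+1}$, $f\of{\ofb{c,d}}\subseteq I_{n+1}$, and $f$ attaining the far endpoint of $I_{n+1}$. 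On $\ofb{c,d}$ I set $g_{n+1}=\of{h|_{A_{n+1}}}^{-1}\circ f$, which equals $A_{n+1}$ as a set, matches $g_n$ at the endpoints, and keeps $h\circ g_{n+1}=f$; off these intervals $g_{n+1}=g_n$.

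The crux is the existence of such an interval $\ofb{c,d}$, and this is exactly what the small folds property delivers. Choosing $\lambda$ with $\beta<\ofa{I_{n+1}}$ gives $\xi$-crookedness, $\xi<\beta/4$, between $h\of{p_{n+1}}$ and the far endpoint of $I_{n+1}$; together with Proposition \ref{p:weakconf} (used to trap $f$ strictly between two prescribed levels) this produces a deep fold that reaches across $I_{n+1}$ and returns to $h\of{p_{n+1}}$, so that $A_{n+1}$ is entirely covered and $g_{n+1}$ closes up continuously. The inequality $\xi<\beta/4$ is what leaves room to repeat this inside the fold, realizing the nested sequence of folds needed to route a lift from $A_0$ down the chain $0\prec\cdots\prec l\of{n+1}\prec n+1$ into a deeply attached arc; this nesting is the main obstacle, and the reason crookedness at every scale, rather than mere surjectivity of $f$, is required. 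A second point is preserving $g_n\of{\ofb{0,1}}=T_n$: redefining $g_{n+1}$ on $\ofb{c,d}$ deletes the coverage of the small set $g_n\of{\ofb{c,d}}$ about $p_{n+1}$, so I must pick $\ofb{c,d}$ among the several intervals on which $g_n$ passes near $p_{n+1}$ (their multiplicity again guaranteed by crookedness) so that this set stays covered off $\ofb{c,d}$.

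Because crookedness forces $f$ to oscillate at all scales near $t^*$, the fold $\ofb{c,d}$ can be taken arbitrarily short, so by continuity of $g_n$ the set $g_n\of{\ofb{c,d}}$ has arbitrarily small diameter, while $g_{n+1}\of{\ofb{c,d}}=A_{n+1}$ has diameter $\operatorname{diam}\of{A_{n+1}}\to0$ (Theorem \ref{whyburn}); hence the stagewise change $\|g_{n+1}-g_n\|_\infty$ can be forced below any prescribed summable $\delta_{n+1}$, and $\of{g_n}$ converges uniformly to a continuous $g$ with $h\circ g=f$. Finally, $g$ is onto: any $x\in A_i$ lies in $T_n=g_n\of{\ofb{0,1}}$ for every $n\ge i$, so $x=g_n\of{t_n}$ for some $t_n$, and a convergent subsequence $t_n\to t$ gives $g\of{t}=\lim g_n\of{t_n}=x$ by uniform convergence; thus $\bigcup_iA_i\subseteq g\of{\ofb{0,1}}$, and since $g\of{\ofb{0,1}}$ is closed and $\bigcup_iA_i$ is dense in $D$, $g$ maps onto $D$.
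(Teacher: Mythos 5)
Your overall architecture is the paper's: Whyburn decomposition $D=E_D\cup\bigcup_i A_i$, extension of $h_0$ via Proposition \ref{p:h*}, recursive lifts $g_n$ redefined on small fold intervals located with the small folds property, and a reserve mechanism to keep coverage of earlier arcs. But there is a genuine gap at the decisive step: you fix $h$ \emph{completely before} constructing $g$, prescribing $h\of{A_i}=I_i$ as an essentially arbitrary interval of length at most $2^{-i}$. Once $I_{n+1}$ is fixed, the rigidity of $h\circ g=f$ means that to cover the endpoint $q_{n+1}\in E_D$ the map $f$ must attain \emph{exactly} the far endpoint $b$ of $I_{n+1}$ as a local maximum, on an interval $\ofb{c,d}$ with $f\of{c}=f\of{d}=h_0\of{p_{n+1}}$ and $f\of{\ofb{c,d}}\subset I_{n+1}$ (no overshoot: near $q_{n+1}$ the only points of $D$ available to a continuous lift lie on $A_{n+1}$). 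Crookedness cannot deliver this: it produces folds reaching within $\xi$ of a level $b$, never exactly to a prescribed level without overshooting. Indeed, every local maximum value of a continuous $f$ is $\max f\of{\ofb{p,q}}$ for some rational interval, so the set of attainable fold tops is countable, and for all but countably many choices of $I_{n+1}$ the required fold simply does not exist. The paper avoids this by building $h$ and the lift \emph{simultaneously}: at stage $i$ it first locates the fold $\ofb{u_i,v_i}$ at the forced level $r_i=\tau_{l(i)}{}^{-1}\of{p_i}$ using the small folds property, and only then defines $s_i=\max\of{f\ofb{u_i,v_i}}$; that is, $h\of{A_i}=\ofb{r_i,s_i}$ is read off from what $f$ actually does, not prescribed in advance.

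Two further steps would fail as written. First, your uniform convergence argument: since $g_{n+1}$ must cover $A_{n+1}$ on $\ofb{c,d}$ while $g_n$ stays near $p_{n+1}$ there, the increment satisfies $\|g_{n+1}-g_n\|_\infty\gtrsim d\of{p_{n+1},q_{n+1}}$, and while $\operatorname{diam}\of{A_i}\to0$, the sequence of arc diameters is \emph{not} summable for general dendrites (a null comb with teeth of diameter $1/n$ already defeats it), so you cannot force the increments below a prescribed summable sequence; the paper instead proves the Cauchy property directly by trapping $g_i\of{t}$ and $g_m\of{t}$ in $C_n\cup\ofc{p_n}$ for the first fold index $n>m$ containing $t$, using $\operatorname{diam}\of{C_n}\to0$ together with condition $(8)_i$ bounding $\operatorname{diam}\of{\tau_{l(i)}\of{f\ofb{u_i,v_i}}}$. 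Second, your surjectivity repair (``pick $\ofb{c,d}$ so that the deleted coverage stays covered off $\ofb{c,d}$'') is a one-stage fix, but coverage of each $A_n$ must survive \emph{all} future stages, each of which carves further subintervals out of $\of{u_n,v_n}$; this requires an inductively maintained invariant, which is exactly the paper's condition $(10)_i$ with the spare interval $J$ satisfying $f\of{\ofb{u_i,v_i}}\subset\operatorname{int}\of{f\of{J}}$, not merely multiplicity of folds at a single stage.
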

	\begin{proof} We will assume here that $D$ is not a tree. The proof in case where $D$ is a tree is similar, but much simpler. We include a short sketch of the proof in this case at the end of our argument.
		
		Let $A_0=p_0q_0, A_1=p_1q_1, A_2=p_2q_2,\dots$, $l(i)$, $\mu(n,i)$ and $\prec$  be as in Section \ref{s:dendrite}.	
		
		Let $r_0=0$, $s_0=1$, and let $\tau_0$ be a homeomorphism of $\ofb{r_0,s_0}$ onto $A_0$ such that $\tau_0\of{r_0}=p_0$ and $\tau_0\of{s_0}=q_0$.
		Additionally, let $u_0,v_0\in\ofb{0,1}$ be such that $u_0<v_0$ and the interval $\ofb{u_0,v_0}$ is minimal with respect to the property $f\of{\ofb{u_0,v_0}}=\ofb{0,1}=\ofb{r_0,s_0}$.
		
		We will construct sequences $\of{r_i}_{i=1}^\infty$, $\of{s_i}_{i=1}^\infty$, $\of{\tau_i}_{i=1}^\infty$, $\of{u_i}_{i=1}^\infty$,  and $\of{v_i}_{i=1}^\infty$ satisfying the following conditions for all positive integers $i$.
				
		\begin{list}{$(\arabic{lcount})_i$ }{\usecounter{lcount}}
			\item\label{l:factf:cond1} $0\le r_i<s_i\le r_i+2^{-i}$.
			\item \label{l:factf:tau}$\tau_i$ is a homeomorphism of $\ofb{r_i,s_i}$ onto $A_i$ such that $\tau_i\of{r_i}=p_i$ and $\tau_i\of{s_i}=q_i$.
			\item \label{l:factf:tauinverse} $r_i=\tau_{l\of{i}}{}^{-1}\of{p_i}$.
			\item $u_i<v_i$.
			\item \label{l:factf:fui=fvi}$f\of{u_i}=f\of{v_i}=r_i$.
			\item $f\of{t}> r_i$ for $t\in\of{u_i,v_i}$.
			\item\label{l:factf:si} $s_i=\max\of{f\ofb{u_i,v_i}}$.
			\item\label{l:factf:diamtauli} $\operatorname{diam}\of{\tau_{l(i)}\of{f\of{\ofb{u_i,v_i}}}}<2^{-i}$ where $\operatorname{diam}(*)$ is the diameter in $D$.
			\item\label{l:factf:intesofunvnanduivi} Suppose $n$ is integer such that $0\le n< i$. Then the following three statements are equivalent.
			\begin{itemize}
				\item $\ofb{u_i,v_i}\cap\ofb{u_n,v_n}\ne\emptyset$.
				\item $\ofb{u_i,v_i}\subset\of{u_n,v_n}$.
				\item $n\prec i$.
			\end{itemize}
			\item\label{l:factf:xinf(I)} Suppose $n$ is an integer such that $0\le n\le i$. Suppose also $x\in\of{r_n,s_n}$. Then there is an interval   $I\subset \of{u_n,v_n}\setminus\bigcup_{j\in\mu(n,i)}\ofb{u_j,v_j}$ such that $x\in\operatorname{int}\of{f\of{I}}$.
		\end{list}
		Observe that $(\ref{l:factf:xinf(I)})_0$ is satisfied.
		
		Let $i$ be a positive integer. Suppose $\of{r_j}_{j=0}^{i-1}$, $\of{s_j}_{j=0}^{i-1}$, $\of{\tau_j}_{j=0}^{i-1}$, $\of{u_j}_{j=1}^{i-1}$,  and $\of{v_j}_{j=1}^{i-1}$ satisfying  the above conditions have been constructed. We will now construct $r_i$, $s_i$, $\tau_i$, $u_i$ and $v_i$.
		
		Set $r_i=\tau_{l(i)}{}^{-1}\of{p_i}$. Using $(\ref{l:factf:xinf(I)})_{i-1}$  with $n=l(i)$ and $x=r_i$ we get an interval $I\subset\of{u_{l(i)},v_{l(i)}}\setminus\bigcup_{j\in\mu\of{l(i),i-1}}\ofb{u_j,v_j}$ such that $r_i\in\operatorname{int}\of{f\of{I}}$.
		Let $\lambda$ be a positive number satisfying the following conditions.
		\begin{list}{($\lambda$-\arabic{lcount})}{\usecounter{lcount}}
			\item $\lambda< 2^{-i}$,
			\item $\operatorname{diam}\of{\tau_{l(i)}\of{\ofb{r_i-\lambda,r_i+\lambda}}}<2^{-i}$, and
			\item $\ofb{r_i-\lambda,r_i+\lambda}\subset f\of{I}$.
		\end{list}
		Let $\beta<\lambda$ and $\xi<\beta/4$ be positive numbers as in Definition \ref{def:smallfolds}.
		Set $a=r_i-2\xi$ and $b=r_i+2\xi$. Clearly, $\ofb{a,b}\subset\of{r_i-\lambda,r_i+\lambda}\subset f\of{I}$.
		Using Proposition \ref{p:weakconf} we get points $c,d\in I$ such $f(c)=a$, $f(d)=b$ and $f(t)\in\of{a,b}$ for each $t$ between $c$ and $d$. Since $b-a=4\xi<\beta$ and $f$ is $\xi$-crooked between $a$ and $b$,
		there is a point $c^{\prime}$ between $c$ and $d$
		and there is a point $d^{\prime}$ between $c^{\prime}$ and $d$ such that $\ofa{b - f(c^{\prime})} \le\xi$ and
		$\ofa{a - f(d^{\prime})} \le\xi$. It follows that
		\begin{equation*}
			r_i+\xi\le f\of{c^{\prime}}<r_i+2\xi=b \quad\text{ and }\quad a=r_i-2\xi<f\of{d^{\prime}}\le r_i-\xi.
		\end{equation*}
		
		We will now consider the cases $c<d$ and $d<c$ to define $u_i$, $v_i$ and an interval $J\subset \of{u_{l(i)},v_{l(i)}}\setminus\bigcup_{j\in\mu\of{l(i),i}}\ofb{u_j,v_j}$ such that
		\begin{equation*}\label{l:factf:J}
			f\of{\ofb{u_i,v_i}}\subset\operatorname{int}\of{f\of{J}}.\tag{$*$}
		\end{equation*}
		
		Case: $c<d$. In this case $c<c^{\prime}<d^{\prime}<d$. Let $u_i$ be the greatest number in the interval $\ofb{c,c^{\prime}}$ such that $f\of{u_i}=r_i$, and let $v_i$ be the least number in the interval $\ofb{c^{\prime},d^{\prime}}$ such that $f\of{v_i}=r_i$. Also, set $J=\ofb{d^{\prime},d}$.
		
		Case: $d<c$. In this case $d<d^{\prime}<c^{\prime}<c$. Let $u_i$ be the greatest number in the interval $\ofb{d^{\prime},c^{\prime}}$ such that $f\of{u_i}=r_i$, and let $v_i$ be the least number in the interval $\ofb{c^{\prime},c}$ such that $f\of{v_i}=r_i$. Also, set $J=\ofb{d,d^{\prime}}$.
		
		Observe that \thetag{\ref{l:factf:J}} is satisfied in both cases. To conclude the construction we set $s_i=\max\of{f\ofb{u_i,v_i}}$ as required in condition $(\ref{l:factf:si})_i$. It is easy to check that conditions $(\ref{l:factf:cond1})_i$-$(\ref{l:factf:intesofunvnanduivi})_i$ are true.
		
		Proof of $(\ref{l:factf:xinf(I)})_i$. If $n=i$, then $\mu\of{n,i}=\emptyset$. So, $\of{u_n,v_n}\setminus\bigcup_{j\in\mu(n,i)}\ofb{u_j,v_j}=\of{u_n,v_n}$ and $(\ref{l:factf:xinf(I)})_i$ follows from $(\ref{l:factf:fui=fvi})_i$ and $(\ref{l:factf:si})_i$. So we may assume that $n<i$. Using $(\ref{l:factf:xinf(I)})_{i-1}$ for
		$x\in\of{u_n,v_n}$ we infer that there is an interval $I_{i-1}\subset \of{u_n,v_n}\setminus\bigcup_{j\in\mu(n,i-1)}\ofb{u_j,v_j}$ such that $x\in\operatorname{int}\of{f\of{I_{i-1}}}$. If $n\ne l(i)$, then $i\notin\mu\of{n,i}$, $\mu\of{n,i}=\mu\of{n,i-1}$ and $(\ref{l:factf:xinf(I)})_i$ is satisfied by letting $I=I_{i-1}$. So, we may assume that $n=l(i)$. To finish the proof of $(\ref{l:factf:xinf(I)})_i$ we will consider the following two cases $x\notin f\of{\ofb{u_i,v_i}}$ and $x\in f\of{\ofb{u_i,v_i}}$.
		
		Case $x\notin f\of{\ofb{u_i,v_i}}$. In this case there is an interval $L\subset f\of{I_{i-1}}$ such that $x\in\operatorname{int}\of{L}$ and $L\cap f\of{\ofb{u_i,v_i}}=\emptyset$.
		It follows from \ref{p:weakconf} that there is an interval $I\subset I_{i-1}$ such that $f(I)=L$. Observe that this choice of $I$ satisfies condition $(\ref{l:factf:xinf(I)})_i$.
		
		Case $x\in f\of{\ofb{u_i,v_i}}$. In this case set $I=J$ and observe that $(\ref{l:factf:xinf(I)})_i$ follows from \thetag{\ref{l:factf:J}}.
		
		The construction of $\of{r_i}_{i=1}^\infty$, $\of{s_i}_{i=1}^\infty$, $\of{\tau_i}_{i=1}^\infty$, $\of{u_i}_{i=1}^\infty$,  and $\of{v_i}_{i=1}^\infty$ satisfying $(\ref{l:factf:cond1})_i$-$(\ref{l:factf:xinf(I)})_i$ is now complete.
		
		Let $h_0$ be a real function of $\bigcup_{j=0}^{\infty}A_j$ defined by $h_0\of{x}=\tau_i{}^{-1}\of{x}$ for $x\in A_i$ for every nonnegative integer $i$. Observe that conditions $(\ref{l:factf:cond1})_i$-$(\ref{l:factf:si})_i$ guarantee that $h_0$ is a well-defined function onto $\ofb{0,1}$ satisfying the assumptions of Proposition  \ref{p:h*}.
		Thus, there is a unique extension of $h_0$ to a continuous mapping $h:D\to\ofb{0,1}$.
		
 Since  $\tau_i{}^{-1}$ is an embedding of $A_i$ into $\ofb{0,1}$ for each nonnegative integer $i$, it follows from Corollary \ref{c:arcinopensubsD} that  $\operatorname{int}_{\ofb{0,1}}\of{h(U)}\ne\emptyset$ for each nonempty open set $U\subset D$.

		For each nonnegative integer $i$, we will define a function $g_i:\ofb{0,1}\to\bigcup_{j=0}^{i}A_j$ by a recursive formula:
		Set $g_0=\tau_0\circ f$ and, for each positive integer $i$, let $g_i$ be defined by
		$$g_i(t)=\left\{
		\begin{array}{ll}
			g_{i-1}(t), & \hbox{if $t\in\ofb{0,1}\setminus\of{u_i,v_i}$;} \\
			\tau_i\circ f(t), & \hbox{if $t\in\of{u_i,v_i}.$}
		\end{array}
		\right.
		$$
		
		The following claim is an easy consequence of the above definition.
		\begin{claim}\label{l:factf:c0}
			Suppose $n$ and $i$ are integers such that $0\le n<i$. Then $g_i\of{t}=g_n\of{t}$ for each $t\in\ofb{0,1}\setminus\bigcup_{j=n+1}^i\of{u_j,v_j}$.
		\end{claim}
		
		\begin{claim}\label{l:factf:c}
			Let $i$ be a nonnegative integer. Then the following properties are true.
			\begin{list}{$(\rm{P-}\arabic{lcount})_i$ }{\usecounter{lcount}}
				\item \label{l:factf:c:cont}$g_i$ is a continuous surjection onto $\bigcup_{j=0}^iA_j$.
				\item \label{l:factf:c:h}$h\circ g_i=f$.
				\item \label{l:factf:c:n}Suppose $n$ is an integer such that $0\le n\le i$ then
				\begin{list}{$(\rm\roman{lcount1})_n$}{\usecounter{lcount1}}
					\item\label{l:factf:c:n:t} $g_i\of{t}=g_n\of{t}=\tau_n\circ f\of{t}$ for $t\in\ofb{u_n,v_n}\setminus\bigcup_{j\in\mu\of{n,i}}\of{u_j,v_j}$,
					\item\label{l:factf:c:n:A} $g_i\of{\ofb{u_n,v_n}\setminus\bigcup_{j\in\mu\of{n,i}}\of{u_j,v_j}}=A_n$, and
					\item\label{l:factf:c:n:C} $g_i\of{\of{u_n,v_n}}\subset C_n$.
				\end{list}
			\end{list}
		\end{claim}
		
		\begin{proof}[Proof of \ref{l:factf:c}]\renewcommand{\qedsymbol}{}
			
			We will prove the claim by induction with respect to $i$. Observe that $(\rm{P-}\ref{l:factf:c:cont})_0$ -- $(\rm{P-}\ref{l:factf:c:n})_0$ are true. Suppose that $i$ is a positive integer such that $(\rm{P-}\ref{l:factf:c:cont})_{i-1}$ -- $(\rm{P-}\ref{l:factf:c:n})_{i-1}$ are satisfied. We will prove $(\rm{P-}\ref{l:factf:c:cont})_i$ -- $(\rm{P-}\ref{l:factf:c:n})_i$.
			
			Clearly, $l\of{i}<i$. If $j\in\mu\of{l\of{i},i-1}$ then $j\not\prec i$ by Proposition \ref{p:notprec}. So, it follows from $(\ref{l:factf:intesofunvnanduivi})_i$ used with $n=j$ that
			$\ofb{u_i,v_i}\cap\bigcup_{j\in\mu\of{l\of{i},i-1}}\ofb{u_j,v_j}=\emptyset$. Using $(\ref{l:factf:intesofunvnanduivi})_i$ again, this time with $n=l\of{i}$ we get the result that
			$\ofb{u_i,v_i}\subset\of{u_{l\of{i}},v_{l\of{i}}}$. It follows from $(\ref{l:factf:tau})_{l\of{i}}$, $(\ref{l:factf:tauinverse})_i$, $(\ref{l:factf:fui=fvi})_i$ and  $(\rm{P-}\ref{l:factf:c:n})_{i-1}$ $(\rm\romannumeral 0\ref{l:factf:c:n:t}\relax)_{l\of{i}}$ that $g_{i-1}\of{u_i}=g_{i-1}\of{v_i}=p_i$.
			So, $g_{i-1}$ restricted to $\ofb{0,1}\setminus\of{u_i,v_i}$ and $\tau_i\circ f$ defined on $\ofb{u_i,v_i}$ are two continuous functions agreeing on the intersection of their (compact) domains. Consequently, $g_i$ which is the union of these two functions is continuous on the interval $\ofb{0,1}$. Also, observe that this definition of $g_i$ guarantees    $(\rm{P-}\ref{l:factf:c:n})_{i}$ $(\rm\romannumeral 0\ref{l:factf:c:n:t}\relax)_{i}$. If $0\le n<i$ then $(\rm{P-}\ref{l:factf:c:n})_{i}$ $(\rm\romannumeral 0\ref{l:factf:c:n:t}\relax)_{n}$  follows automatically from $(\rm{P-}\ref{l:factf:c:n})_{i-1}$ $(\rm\romannumeral 0\ref{l:factf:c:n:t}\relax)_{n}$ because $\mu\of{n,i-1}\subset\mu\of{n,i}$. So, $(\rm{P-}\ref{l:factf:c:n})_{i}$ $(\rm\romannumeral 0\ref{l:factf:c:n:t}\relax)_{n}$ is true for all integers $n$ such that $0\le n\le i$.
			The property $(\rm{P-}\ref{l:factf:c:n})_{i}$ $(\rm\romannumeral 0\ref{l:factf:c:n:A}\relax)_n$ follows from continuity of $g_i$, $(\rm{P-}\ref{l:factf:c:n})_{i}$ $(\rm\romannumeral 0\ref{l:factf:c:n:t}\relax)_{n}$ and $(\ref{l:factf:xinf(I)})_i$.
			
			Proof of $(\rm{P-}\ref{l:factf:c:n})_{i}$ $(\rm\romannumeral 0\ref{l:factf:c:n:C}\relax)_{n}$. Observe that $(\rm{P-}\ref{l:factf:c:n})_{i}$ $(\rm\romannumeral 0\ref{l:factf:c:n:C}\relax)_{i}$ is true since $g_i\of{\of{u_i,v_i}}\subset A_i\setminus\ofc{p_i}\subset C_i$. Hence, it is enough to prove $(\rm{P-}\ref{l:factf:c:n})_{i}$ $(\rm\romannumeral 0\ref{l:factf:c:n:C}\relax)_{n}$ for each nonnegative integer $n<i$. In this case we may use $(\rm{P-}\ref{l:factf:c:n})_{i-1}$ $(\rm\romannumeral 0\ref{l:factf:c:n:C}\relax)_{n}$ to infer that $g_{i-1}\of{\of{u_n,v_n}}\subset C_n$. Suppose $n\not\prec i$. Then $\ofb{u_i,v_i}\cap\ofb{u_n,v_n}=\emptyset$ by $(\ref{l:factf:intesofunvnanduivi})_i$, and $g_i\mid\ofb{u_n,v_n}=g_{i-1}\mid\ofb{u_n,v_n}$. So $g_{i}\of{\of{u_n,v_n}}=g_{i-1}\of{\of{u_n,v_n}}\subset C_n$. Hence, we may assume that $n\prec i$. In such case $\ofb{u_i,v_i}\subset\of{u_n,v_n}$ by $(\ref{l:factf:intesofunvnanduivi})_i$. Consequently, $g_i\of{u_i}=g_{i-1}\of{u_i}=\tau_n\circ f\of{u_i}=p_i$ belongs to $C_i$. Thus, $A_i\subset C_i$ since $p_n\notin A_i$. This implies that $g_i\of{\ofb{u_i,v_i}}\subset A_i\subset C_n$. It follows that $g_i\of{\ofb{u_n,v_n}}\subset C_n$
			since $g_{i}\of{\ofb{u_n,v_n}\setminus\of{u_i,v_i}}=g_{i-1}\of{\ofb{u_n,v_n}\setminus\of{u_i,v_i}}$. This completes the proof of $(\rm{P-}\ref{l:factf:c:n})_{i}$ $(\rm\romannumeral 0\ref{l:factf:c:n:C}\relax)_{n}$, and the proof of $(\rm{P-}\ref{l:factf:c:n})_{i}$ in general.
			
			It follows from $(\rm{P-}\ref{l:factf:c:n})_{i}$ $(\rm\romannumeral 0\ref{l:factf:c:n:A}\relax)_n$ that $g_i\of{\ofb{0,1}}=\bigcup_{j=0}^iA_j$. So, $(\rm{P-}\ref{l:factf:c:cont})_i$ is true since we  have already proven that $g_i$ is continuous.
			
			To show $(\rm{P-}\ref{l:factf:c:h})_i$, recall that $h\mid \bigcup_{j=0}^\infty A_j=h_0$ and $h_0\of{x}=\tau_i{}^{-1}\of{x}$ for all $x\in A_i$. It follows from the definition of $g_i$ that $g_i\of{t}=\tau_i\circ f\of{t}\in A_i$ for all $t\in\of{u_i,v_i}$. So, $h\circ g_i\of{t}=\tau_i{}^{-1}\circ\tau_i\circ f\of{t}=f\of{t}$ for all $t\in\of{u_i,v_i}$.
			Now, $(\rm{P-}\ref{l:factf:c:h})_i$ follows from $(\rm{P-}\ref{l:factf:c:h})_{i-1}$. Hence, the claim is true.
		\end{proof}
		\begin{claim}\label{l:factf:cauchy}
			$\of{g_i}$ is a Cauchy sequence.
		\end{claim}
		\begin{proof}[Proof of \ref{l:factf:cauchy}]\renewcommand{\qedsymbol}{}
			Let $\epsilon$ be an arbitrary positive number. It follows from Theorem \ref{whyburn} that there is an integer $m$ such that $2^{-m}<\epsilon$ and
			$\operatorname{diam}\of{C_j}<\epsilon/2$ for each $j\ge m$. Let $i$ be an arbitrary integer grater than $m$ and let $t$ be an arbitrary element of $\ofb{0,1}$.
			To complete the proof of the claim, we will show that
			\begin{equation*}\label{l:factf:cauchy:?}
				d\of{g_i\of{t},g_m\of{t}}<\epsilon \tag{$*0$}
			\end{equation*}
			If $t\notin\bigcup_{j=m+1}^i\of{u_j,v_j}$ then $g_i\of{t}=g_m\of{t}$ by \ref{l:factf:c0}, and the \eqref{l:factf:cauchy:?} is true. So, we may assume that $t\in\bigcup_{j=m+1}^i\of{u_j,v_j}$. Let $n$ be the least integer such that $m<n\le i$ and $t\in\of{u_n,v_n}$.
			It follows from $(\rm{P-}\ref{l:factf:c:n})_{i}$ $(\rm\romannumeral 0\ref{l:factf:c:n:C}\relax)_n$ that $g_i\of{t}\in C_n$.
			Since $p_n\in\operatorname{cl}\of{C_n}$ by Proposition \ref{p:Ci}(\ref{p:Ci:pi}), we infer that
			\begin{equation*}\label{l:factf:cauchy:*1}
				d\of{g_i\of{t},p_n}\le \operatorname{diam}\of{C_n}<\epsilon/2 \tag{$*1$}
			\end{equation*}
			
			Clearly, $l\of{n}<n\le i$. Since $t\in\of{u_n,v_n}\subset\of{u_{l\of{n}},v_{l\of{n}}}$ by $(\ref{l:factf:intesofunvnanduivi})_n$, the choice of $n$ implies that $l\of{n}\le m$.
			
			Suppose there exists an integer $j$ such that $l\of{n}\le m$ and $t\in\of{u_j,v_j}$. Then, since $m<n$ and $t\in\of{u_n,v_n}\cap\of{u_j,v_j}\cap\of{u_{l\of{n}},v_{l\of{n}}}$, $(\ref{l:factf:intesofunvnanduivi})_j$ and $(\ref{l:factf:intesofunvnanduivi})_n$ imply that $l\of{n}\prec j\prec n$ which contradicts Proposition \ref{p:notprec}. So, $t\notin\bigcup_{j=l(n)+1}^m\of{u_j,v_j}$ and Claim \ref{l:factf:c0} implies
			\begin{equation*}\label{l:factf:cauchy:*2}
				g_{l\of{n}}\of{t}=g_m\of{t} \tag{$*2$}
			\end{equation*}
			
			Using $(\ref{l:factf:tau})_n$, $(\ref{l:factf:tauinverse})_n$ and $(\ref{l:factf:fui=fvi})_n$ we infer that $\tau_{l\of{n}}\of{f\of{u_n}}=p_n$. It follows from $(\rm{P-}\ref{l:factf:c:n})_{l\of{n}}$ $(\rm\romannumeral 0\ref{l:factf:c:n:t}\relax)_{l\of{n}}$ that $g_{l\of{n}}\of{u_n}=\tau_{l\of{n}}\of{f\of{u_n}}=p_n$ and
			$g_{l\of{n}}\of{t}=\tau_{l\of{n}}\of{f\of{t}}$. We now apply $(\ref{l:factf:diamtauli})_n$ to estimate the distance between $g_{l\of{n}}\of{t}$ and $p_n$ in the following way:
			$d\of{p_n,g_{l\of{n}}\of{t}}=d\of{\tau_{l\of{n}}\of{f\of{u_n}},\tau_{l\of{n}}\of{f\of{t}}}\le \operatorname{diam}\of{\tau_{l\of{n}}\of{f\ofb{u_n,v_n}}}<2^{-n}$.
			Since $2^{-n}\le2^{-m-1}<\epsilon$ we get the result
			\begin{equation*}\label{l:factf:cauchy:*3}
				d\of{p_n,g_{l\of{n}}\of{t}}<\epsilon/2 \tag{$*3$}
			\end{equation*}
			
			Combining \eqref{l:factf:cauchy:*1}, \eqref{l:factf:cauchy:*3} and \eqref{l:factf:cauchy:*2} we infer that
			\begin{equation*}\label{l:factf:cauchy:!}
				d\of{g_i\of{t},g_m\of{t}}\le  d\of{g_i\of{t},p_n}+d\of{p_n,g_{l\of{n}}\of{t}}+d\of{g_{l\of{n}}\of{t},g_m\of{t}}<\epsilon/2+\epsilon/2+0
			\end{equation*}
			Hence, \eqref{l:factf:cauchy} is true and the proof of the claim is complete.
		\end{proof}
		
		Let $g=\lim_{i\to\infty}g_i$. Clearly, $g$ is continuous as the limit of a uniformly convergent sequence of continuous functions into a compact space $D$.  Observe that $g\of{\ofb{0,1}}=D$ since $\bigcup_{j=0}^\infty A_j$ is dense in $D$ and $g_i$ is a surjection onto $\bigcup_{j=0}^iA_j$ by $(\rm{P-}\ref{l:factf:c:cont})_i$.
		Finally, observe that the sequence $\of{h\circ g_i}$ converges uniformly to $h\circ g$ since the
		sequence $\of{g_i}$ converges uniformly to $g$ and $h$ is continuous. But $h\circ g_i=f$ for all $i$. Consequently,
		$h\circ g=f$. This completes the proof of the lemma in the case when $D$ is not a tree.
		\vspace{0.25cm}
		
		\noindent
		\emph{Sketch of Proof in the case when $D$ is a tree.} In this case $D$ may be represented as a finite union $\bigcup_{i=0}^kA_i$, see \ref{p:AiTree}.
		Set $r_0$, $s_0$, $\tau_0$, $u_0$ and $v_0$ the same way as before and then construct $\of{r_i}_{i=1}^k$, $\of{s_i}_{i=1}^k$, $\of{\tau_i}_{i=1}^k$, $\of{u_i}_{i=1}^k$,  and $\of{v_i}_{i=1}^k$ satisfying conditions $(\ref{l:factf:cond1})_{i}$-$(\ref{l:factf:xinf(I)})_{i}$ for all  positive integers $i\le k$. Note that $(\ref{l:factf:diamtauli})_{i}$ and other estimates of distance by $2^{-i}$ are irrelevant in this finite case and may be omitted. After the $k$th step of the construction, define $h:D\to\ofb{0,1}$ by $h\of{x}=\tau_i{}^{-1}\of{x}$ for $x\in A_i$ for every nonnegative integer $i\le k$.  Then construct $g_0, g_1,\dots,g_k$ using the same recursive formula as above. Finally, set $g=g_k$ and observe that $h$ and $g$ defined this way satisfy the lemma.
	\end{proof}
	
	\section{A transitive map on $[0,1]$ with the small folds property}\label{s:transitive+sfp}
	W.R.R. Transue and the second author of the present paper constructed in \cite{minc-transue} a transitive map $f$ of $\ofb{0,1}$ onto itself such that $\varprojlim\of{\ofb{0,1},f}$ is homeomorphic to the pseudo-arc. It is possible, but not entirely clear that the map on $[0,1]$ constructed in \cite{minc-transue} has the small folds property. In this section we will tweak the original construction very slightly to be able to show that the small folds property is satisfied.
	
	\subsection{Summary of the original construction in \cite{minc-transue}}
	The two key elements of that construction are   \cite[Proposition 5, p. 1166]{minc-transue} and \cite[Lemma, p. 1167]{minc-transue}. The lemma is used repeatedly by the inductive construction in the proof of the main result in \cite{minc-transue} (Theorem on page 1169). In turn, the lemma uses Proposition 5 in each pass. We will summarize the proposition by briefly describing arguments passed to the routines and the output produced by them.
	
	\noindent $\bullet$ \cite[Proposition 5, p. 1166]{minc-transue}. \emph{Input}: positive numbers $\epsilon<1$ and $\gamma<\epsilon/4$. \emph{Output}: A piecewise linear continuous function $g$ mapping $\ofb{0,1}$ onto itself such that the distance between $g$ and the identity is estimated by $\epsilon$, and $g$ is $\gamma$-crooked between all $a,b\in\ofb{0,1}$ such that $\ofa{a-b}<\epsilon$. (See the original statement of the proposition in \cite{minc-transue} for more essential properties of $g$.)

	A continuous and piecewise linear function $f$ of $\ofb{0,1}$ onto itself is called admissible if $\ofa{f^{\prime}\of{t}}\ge 4$ for every t such that $f^{\prime}\of{t}$ exists and for every $0 \le a < b \le 1$ there is a positive integer $m$ such that $f^m\of{\ofb{a,b}}=\ofb{0,1}$. For example, the second iteration of the full tent map is admissible.
	
	\noindent $\bullet$ \cite[Lemma, p. 1167]{minc-transue}.  \emph{Input}: an admissible map $f$ and positive numbers $\eta$ and $\delta$.  \emph{Output}: A positive integer $n$ and an admissible map $F$ such that $f$ and $F$ are $\eta$ close, $F^n$ is $\delta$-crooked. Moreover, if $0\le a < b \le 1$ and $b - a\ge \eta$ , then
	$f\of{\ofb{a,b}}\subset F\of{\ofb{a,b}}$ and $F^n\of{\ofb{a,b}}=\ofb{0,1}$.
	
	In the proof of the lemma, properties of the input are used to select a positive number $\epsilon$, a positive integer $n$, and a positive number $\gamma$. (The order of this choice is important. The choice of $n$ depends on that of $\epsilon$. The choice of $\gamma$ depends on $\epsilon$ $n$.) Then \cite[Proposition 5, p. 1166]{minc-transue} is used to obtain $g$. The function $F=f\circ g$ satisfies the lemma.
	
	In the following observation we use the same notation as in the first three lines of page 1168 in \cite{minc-transue}.
	\begin{observation}\label{o:eps&gaminlemma}
		In the proof of \cite[Lemma, p. 1167]{minc-transue}, $\epsilon$ is selected to be exactly $\eta/s$ where $s$ is such that $\ofa{f^{\prime}\of{t}}<s$ whenever $f^{\prime}\of{t}$  exists. In fact, we may set $\epsilon$ to be any positive number $\le\eta/s$ and apply the same proof as it is written in \cite{minc-transue} without any need for an additional change. Another degree of freedom in the proof of the lemma is the choice of $\gamma$. After $\epsilon$ and $n$ are selected, $\gamma$ may be chosen to be any positive number less than $\min\of{\alpha,s^{-n},\epsilon/4,\delta s^{-n}/5}$. This will allow us to strengthen the lemma by imposing an additional condition on $\gamma$.
	\end{observation}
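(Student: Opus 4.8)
The plan is to revisit the proof of \cite[Lemma, p. 1167]{minc-transue} step by step, isolating every place where the specific choice $\epsilon=\eta/s$ and the specific upper bound on $\gamma$ are invoked, and to check that in each such place only an \emph{inequality} is actually needed, pointing in the direction we want. Recall that that proof sets $F=f\circ g$, where $g$ is the function produced by \cite[Proposition 5, p. 1166]{minc-transue} for the input pair $(\epsilon,\gamma)$; thus $g$ maps $\ofb{0,1}$ onto itself, is $\epsilon$-close to the identity, and is $\gamma$-crooked between every pair $a,b$ with $\ofa{a-b}<\epsilon$. The whole observation is then a matter of bookkeeping against this construction.

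First I would treat the degree of freedom in $\epsilon$. The equality $\epsilon=\eta/s$ enters only through the estimate that forces $f$ and $F$ to be $\eta$-close: since $g$ is $\epsilon$-close to the identity and $\ofa{f^{\prime}}<s$, we have $\ofa{F(t)-f(t)}=\ofa{f(g(t))-f(t)}\le s\,\ofa{g(t)-t}\le s\epsilon\le\eta$, and the last inequality holds as soon as $\epsilon\le\eta/s$. Every other appearance of $\epsilon$ is monotone in the safe direction: the inclusion $f\of{\ofb{a,b}}\subset F\of{\ofb{a,b}}$ and the covering conclusion $F^n\of{\ofb{a,b}}=\ofb{0,1}$ for $b-a\ge\eta$ improve as $g$ gets closer to the identity, so replacing $\epsilon$ by a smaller value can only help them. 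The one point to verify carefully is that $n$ is chosen \emph{after} $\epsilon$ is fixed and is adjusted to it; a smaller $\epsilon$ may force a larger $n$, but since $n$ is selected afterward this is harmless, and it is exactly the dependence ``the choice of $n$ depends on that of $\epsilon$'' recorded in the statement.

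Next, with $\epsilon$ and $n$ now regarded as fixed, I would collect all the upper bounds imposed on $\gamma$ throughout the argument. The hypothesis of \cite[Proposition 5, p. 1166]{minc-transue} itself requires $\gamma<\epsilon/4$; the bound $\gamma<s^{-n}$ keeps the perturbation controlled over the $n$ iterates of $F$; the bound $\gamma<\delta s^{-n}/5$ is the one that converts the $\gamma$-crookedness of $g$ into the $\delta$-crookedness of $F^n$ after accounting for an expansion factor at most $s$ per iterate; and $\gamma<\alpha$ is inherited verbatim from the original proof. Taking $\gamma$ strictly below $\min\of{\alpha,s^{-n},\epsilon/4,\delta s^{-n}/5}$ satisfies all four constraints simultaneously, which is precisely the assertion of the observation and is what will later give us room, in Theorem \ref{t:smfolds}, to shrink $\gamma$ further.

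The step I expect to be the main obstacle is the crookedness accounting in the previous paragraph: one must confirm that turning $\gamma$-crookedness of $g$ into $\delta$-crookedness of $F^n$ genuinely uses only $\gamma<\delta s^{-n}/5$ together with $\gamma<s^{-n}$, and that shrinking $\epsilon$ does not spoil it, since a smaller $\epsilon$ narrows the band of pairs on which $g$ is known to be crooked. Because the intermediate points produced along the $n$ iterates remain within a distance controlled by $s^{-n}<\epsilon$, that band is never exceeded and the estimate carries over unchanged; verifying this containment is the crux that licenses both degrees of freedom claimed in the observation.
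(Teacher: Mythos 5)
Your proposal is correct and takes essentially the same route as the paper, which justifies the Observation purely by inspection of the proof in \cite{minc-transue}: the equality $\epsilon=\eta/s$ enters only through the one-sided estimate $\ofa{F(t)-f(t)}\le s\,\ofa{g(t)-t}\le s\epsilon\le\eta$, every other occurrence of $\epsilon$ and every constraint on $\gamma$ is an upper bound that survives shrinking, and $n$ is selected after $\epsilon$, so the original argument goes through verbatim. The containment you flag as the crux is settled exactly as you suggest: since $n$ is chosen so that $f^n$ maps intervals of length greater than $\epsilon/4$ onto $\ofb{0,1}$, one gets $1\le s^n\epsilon/4$, hence $s^{-n}\le\epsilon/4<\epsilon$, so the band on which $g$ is known to be $\gamma$-crooked is never exceeded.
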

	
	In the proof of the main result in \cite{minc-transue} (Theorem on page 1169) a sequence of admissible functions $f_1,f_2,\dots$ and a sequence of positive integers $n\of{1},n\of{2},\dots$ are constructed by induction to satisfy the following three conditions
	\begin{list}{(\roman{lcount})}{\usecounter{lcount}}
		\item $\ofa{f_{i+1}\of{t}-f_i\of{t}}<2^{-i}$ for each $t\in\ofb{0,1}$,
		\item $f_i^{n\of{k}}$ is $\of{2^{-k}-2^{-k-i}}$-crooked for each positive integer $k\le i$, and
		\item if $0\le a<b\le1$ and $b-a\ge 2^{-k}$, then $f_i^{n\of{k}}\of{\ofb{a,b}}=\ofb{0,1}$ for each positive integer $k\le i$.
	\end{list}
	
	For each integer $i\ge 2$, \cite[Lemma, p. 1167]{minc-transue} is used with $f=f_{i-1}$ and with a certain choice of $\eta$ and $\delta$. Then $n\of{i}$ and $f_i$ are defined by  setting $n\of{i}=n$ and $f_i=F$ where $n$ and $F$ are output by the lemma.
	
	The first condition in the construction guarantees that the sequence $\of{f_i}$ converges uniformly. The second condition implies that the inverse limit of copies of $\ofb{0,1}$ with $\lim_{i\to\infty}f_i$ as the bonding map is the pseudo-arc. Finally, $\lim_{i\to\infty}f_i$ is transitive by (iii) and Theorem 6 of \cite{barge-martin}.
	
	\subsection{Adjustments to the construction}
	We will use Observation \ref{o:eps&gaminlemma} to obtain the following lemma.
	\begin{lemma}[{Replacement for \cite[Lemma, p. 1167]{minc-transue}}]\label{l:repl}
		Let $f:\ofb{0,1}\to\ofb{0,1}$ be an admissible map. Let $\eta$, $\delta$ and $\lambda$ be three positive numbers. Then there is an integer $n$ and there are continuous maps $g$ and $F$ of $\ofb{0,1}$ onto itself satisfying the following conditions:
		\begin{enumerate}
			\item $F=f\circ g$,
			\item \label{l:repl:F-f g-id} $\ofa{F\of{t}-f\of{t}}<\eta$ and $\ofa{g(t)-t}<\eta$ for each $t\in\ofb{0,1}$,
			\item $F^n$ is $\delta$-crooked,
			\item \label{l:repl:fjinFj} if $0\le a<b\le 1$ and $b-a\ge\eta$, then $f^j\of{\ofb{a,b}}\subset F^j\of{\ofb{a,b}}$ for each positive integer $j$,
			\item if $0\le a<b\le 1$ and $b-a\ge\eta$, then $F^n\of{\ofb{a,b}}=\ofb{0,1}$,
			\item $F$ is admissible, and
			\item \label{l:repl:lambda}
			there exist positive numbers $\beta<\lambda$ and $\xi<\beta/4$ satisfying the following condition:
			for every $a$ and $b$ such that $\ofa{a - b} < \beta$, $F$ is $\xi$-crooked between $a$ and $b$.
		\end{enumerate}
	\end{lemma}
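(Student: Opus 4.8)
The plan is to re-run the proof of \cite[Lemma, p. 1167]{minc-transue} essentially verbatim, exploiting the two degrees of freedom isolated in Observation \ref{o:eps&gaminlemma} and superimposing conditions (2), (4) and (7) by shrinking $\epsilon$ and $\gamma$ further. Fix a constant $s$ with $|f'(t)| < s$ wherever the derivative exists, so that $f$ is Lipschitz-$s$; admissibility forces $s > 4$, in particular $s>1$. Running the original argument applies \cite[Proposition 5, p. 1166]{minc-transue} with inputs $\epsilon$ and $\gamma$ and sets $F = f\circ g$. With this $F$, conditions (1), (3), (5), (6), the estimate $|F(t)-f(t)|<\eta$ in (2), and the $j=1$ instance of (4) are exactly what the original lemma delivers, so nothing new is required for them. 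The remaining part of (2) is also immediate: Proposition 5 returns $g$ within $\epsilon$ of the identity, and Observation \ref{o:eps&gaminlemma} lets us take $\epsilon \le \eta/s < \eta$, whence $|g(t)-t| \le \epsilon < \eta$.

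For (4) I would argue by induction on $j$, the case $j=1$ being given. The engine is the containment $g(S)\supset S$ for intervals $S$ (of length at least $\eta$), which underlies the original inclusion $f(S)\subset F(S)=f(g(S))$ and reflects that $g$, being within $\epsilon<\eta$ of the identity and crooked, oscillates across the diagonal over any such interval. Assuming $f^{j-1}([a,b])\subset F^{j-1}([a,b])$, I apply the engine to $S=F^{j-1}([a,b])$ to get $g(F^{j-1}([a,b]))\supset F^{j-1}([a,b])\supset f^{j-1}([a,b])$, and then apply $f$ to obtain $f^{j}([a,b])\subset F^{j}([a,b])$. The one point needing care is that the intermediate images $F^{j-1}([a,b])$ stay long enough for the engine to apply; here I would use admissibility (and the fact that, by (5), these images eventually equal $[0,1]$), extending the engine to shorter intervals if necessary via the local surjectivity of the crooked near-identity map $g$.

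The heart of the lemma is (7), and this is where the additional constraints enter. I would choose $\beta$ to be a suitable multiple of $\epsilon$, small enough that $\beta<\lambda$ — permissible because Observation \ref{o:eps&gaminlemma} allows us to take $\epsilon<\lambda$ — and $\xi$ a suitable multiple of $s\gamma$, small enough that $\xi<\beta/4$, which is exactly the promised extra condition on $\gamma$ (e.g.\ $\gamma<\epsilon/(4s)$). The mechanism is that $g$ is $\gamma$-crooked between every pair of \emph{values} at distance less than $\epsilon$, while $f$ is Lipschitz-$s$ with $|f'|\ge 4$, so that two values $a,b$ with $|a-b|<\beta$ possess matching $f$-preimages lying within $\beta/4<\epsilon$ of one another. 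Given $c,d$ with $F(c)=a$ and $F(d)=b$, one converts a fold of $g$ at scale $\gamma$ between such nearby preimages into a fold of $F=f\circ g$ at scale $s\gamma=\xi$ between $a$ and $b$, locating the relevant preimages inside $[c,d]$ by means of Proposition \ref{p:weakconf}.

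The hard part will be precisely this last conversion. The crookedness supplied by Proposition 5 is guaranteed only between pairs of values at distance less than $\epsilon$, whereas for arbitrary $c,d$ with $F(c)=a$ and $F(d)=b$ the points $g(c)$ and $g(d)$ — which are $f$-preimages of the nearby values $a$ and $b$ — may be far apart in $[0,1]$, so one cannot simply feed $g(c),g(d)$ into the crookedness of $g$. I expect to resolve this by a case analysis on the configuration of $g(c)$ and $g(d)$: when they lie within $\epsilon$ of each other the crookedness of $g$ applies directly and yields $c'$ and $d'$ with $|b-F(c')|\le s\gamma$ and $|a-F(d')|\le s\gamma$; when they are far apart one first moves along $[c,d]$ to a point whose $g$-value is a preimage of $b$ (respectively $a$) that is within $\epsilon$ of a matching preimage of the other value, and only then invokes the local crookedness of $g$ there. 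Checking that the points produced this way occur in the correct order between $c$ and $d$, and that the estimates $|b-F(c')|\le\xi$ and $|a-F(d')|\le\xi$ survive the Lipschitz push-forward through $f$, is the delicate core of the argument.
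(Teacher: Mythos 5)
Your overall route is the paper's own: re-run the proof of \cite[Lemma, p.\ 1167]{minc-transue} using the freedoms isolated in Observation \ref{o:eps&gaminlemma} (the paper takes $\epsilon<\min\of{\eta/s,\alpha}$ and appends $\xi/s$ to the list bounding $\gamma$), delegate (1), (3), (5), (6) and the first half of (2) to the original argument, prove (4) by induction from the containment $A\subset g\of{A}$, and prove (7) by converting a $\gamma$-fold of $g$ into a $\xi$-fold of $F=f\circ g$ via the Lipschitz bound $s$. But at the point you yourself flag as the delicate core of (7) there is a genuine gap, and the case analysis you propose for the configuration where $g(c)$ and $g(d)$ are far apart is both unnecessary and unlikely to close as described: crookedness of $g$ must be fed a \emph{specific} preimage pair with order control relative to the original $c$ and $d$, and your existence-type remark that $a,b$ possess \emph{some} matching $f$-preimages within $\beta/4$ of each other does not bear on the pair that actually arises. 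The paper's closing observation is that after the first-return reduction the far-apart case is vacuous: let $c_0$ be the last point of $\ofb{c,d}$ with $F\of{c_0}=a$ and $d_0$ the first point of $\ofb{c_0,d}$ with $F\of{d_0}=b$; then $F\of{\ofb{c_0,d_0}}=\ofb{a,b}$, so $g\of{\ofb{c_0,d_0}}$ is an interval containing $g\of{c_0}$ and $g\of{d_0}$ and contained in $f^{-1}\of{\ofb{a,b}}$, hence $\ofb{g\of{c_0},g\of{d_0}}\subset f^{-1}\of{\ofb{a,b}}$. Since $\beta<2\epsilon$ (a constraint you must impose explicitly -- your ``suitable multiple of $\epsilon$'' has to be less than $2$), the expansion property of the admissible $f$ (property (1) of \cite{minc-transue}, used with $\epsilon<\alpha$) gives $\ofa{g\of{c_0}-g\of{d_0}}<\epsilon$ outright. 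Then $\gamma$-crookedness of $g$ between $g\of{c_0}$ and $g\of{d_0}$, applied to the preimages $c_0,d_0$ themselves, yields $c'$ and $d'$ in the correct order inside $\ofb{c_0,d_0}\subset\ofb{c,d}$, and $\gamma<\xi/s$ pushes the two estimates through $f$.

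Your sketch of (4) also needs repair at the threshold. The engine $A\subset g\of{A}$ from \cite[Proposition 5 (v)]{minc-transue} holds for intervals of diameter at least $\gamma$, not $\eta$, and a near-identity map gives no such containment below the scale of its folds, so ``extending the engine to shorter intervals via local surjectivity'' is not available; likewise, lower-bounding the lengths of $F^{j-1}\of{\ofb{a,b}}$ via your appeal to (5) would require a Lipschitz constant for $F$, which Proposition 5 does not supply for $g$. The paper instead proves $\operatorname{diam}\of{f^{j}\of{\ofb{a,b}}}\ge\gamma$ for every $j$ using $f$ alone -- otherwise $f^{j+n}\of{\ofb{a,b}}$ would have diameter at most $s^{n}\gamma<1$, contradicting $f^{j+n}\of{\ofb{a,b}}=\ofb{0,1}$; this is exactly why $\gamma<s^{-n}$ appears among the constraints -- and then transfers the bound to $F^{j-1}\of{\ofb{a,b}}$ through the induction hypothesis $f^{j-1}\of{\ofb{a,b}}\subset F^{j-1}\of{\ofb{a,b}}$. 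With these two repairs your outline coincides with the paper's proof.
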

	\begin{proof}
		Let $\alpha$ and $s$ be defined in the same way as in the proof of \cite[Lemma, p. 1167]{minc-transue}. Let $\epsilon$ be a positive number less than $\min\of{\eta/s,\alpha}$.
		It follows from (1) on page 1167 in  \cite[Lemma, p. 1167]{minc-transue} that
		\begin{observationInTh}\label{l:repl:obs}
			Suppose that $a,b,a^{\prime},b^{\prime}\in\ofb{0,1}$ are such that $\ofa{a-b}<2\epsilon$ and $\ofb{a^{\prime},b^{\prime}}\subset f{}^{-1}\of{\ofb{a,b}}$. Then $\ofa{a^{\prime}-b^{\prime}}<\epsilon$.
		\end{observationInTh}
		Let $n$ be defined in the same way as in the proof of Lemma in \cite{minc-transue}, that is if $0\le a<b\le 1$ and $b-a>\epsilon/4$, then $f^n\of{\ofb{a,b}}=\ofb{0,1}$.
		Let $\beta$ be a positive number less than $\min\of{2\epsilon,\lambda}$, let Let $\xi$ be a positive number less than $\beta/4$, and let $\gamma$ be a positive number less than $\min\of{\alpha,s^{-n},\epsilon/4,\delta s^{-n}/5,\xi/s}$. As it was done in the original proof we now use \cite[Proposition 5]{minc-transue} to get the map $g$ and define $F=f\circ g$. (Notice that $\ofa{g(t)-t}$ could be estimated in (\ref{l:repl:F-f g-id}) by $\eta/s$ instead just by $\eta$ as it is stated in that condition.) The proof of all conditions except for (\ref{l:repl:fjinFj}) and (\ref{l:repl:lambda}) was given in \cite{minc-transue} and will be omitted here. We will only prove (\ref{l:repl:fjinFj}) and (\ref{l:repl:lambda}).
		
		\begin{proof}[Proof of (\ref{l:repl:fjinFj})]\renewcommand{\qedsymbol}{}
			Recall that the number $s$ was defined in \cite{minc-transue} such that $\ofa{f^{\prime}\of{t}}<s$ for all $t$ such that $f^{\prime}\of{t}$ is defined. Observe that $s>4$ because $f$ is admissible. It was observed in \cite{minc-transue} that $\operatorname{diam}\of{f\of{C}}\le s\operatorname{diam}\of{C}$ for every $C\subset\ofb{0,1}$; see (2) on page 1167 in \cite{minc-transue}.
			
			Let $a$ and $b$ be such that $0\le a<b\le 1$ and $b-a\ge\eta$. Since $\epsilon<\eta/s<\eta/4$, $b-a\ge4\epsilon>\epsilon/4$. It follows from the choice of $n$ that $f^n\of{\ofb{a,b}}=\ofb{0,1}$. Observe that
			\begin{equation*}\label{l:repl:eq1}
				\operatorname{diam}\of{f^j\of{\ofb{a,b}}}\ge \gamma \quad \text{for each nonnegative integer $j$.} \tag{$*$}
			\end{equation*}
			Otherwise, $\operatorname{diam}\of{f^{j+n}\of{\ofb{a,b}}}\le s^n\operatorname{diam}\of{f^{j}\of{\ofb{a,b}}}<s^n\gamma$ which is a contradiction because $f^{j+n}\of{\ofb{a,b}}=\ofb{0,1}$ and $s^n\gamma<s^ns^{-n}=1$.
			
			\cite[Proposition 5 (v)]{minc-transue} states that $A\subset g\of{A}$ for each interval $A\subset\ofb{0,1}$ such that $\operatorname{diam}\of{A}\ge\gamma$. Applying $f$ to both sides of the inclusion $A\subset g\of{A}$ we get  $f\of{A}\subset f\circ g\of{A}=F\of{A}$. Hence,
			\begin{equation*}\label{l:repl:eq2}
				f\of{A}\subset F\of{A} \quad \text{for each interval $A\subset\ofb{0,1}$ such that $\operatorname{diam}\of{A}\ge\gamma$.}\tag{$**$}
			\end{equation*}
			We will prove the inclusion
			\begin{equation*}
				f^{j}\of{\ofb{a,b}}\subset F^{j}\of{\ofb{a,b}} \tag{I$_j$}
			\end{equation*}
			by induction with respect to $j$.  It follows from \eqref{l:repl:eq1} for $j=0$ that $b-a\ge\gamma$. So we  may use \eqref{l:repl:eq2} with $A=\ofb{a,b}$ to get
			\thetag{I$_1$}. Now, suppose $j\ge2$ and \thetag{I$_{j-1}$} is true. We need to show \thetag{I$_{j}$}.
			Applying $f$ to both sides of the inequality \thetag{I$_{j-1}$} we infer that $f^{j}\of{\ofb{a,b}}\subset f\of{F^{j-1}\of{\ofb{a,b}}}$.
			Since it follows from \eqref{l:repl:eq1} for $j-1$ and \thetag{I$_{j-1}$} that $\operatorname{diam}\of{F^{j-1}\of{\ofb{a,b}}}\ge\gamma$, we  may use \eqref{l:repl:eq2} with $A=F^{j-1}\of{\ofb{a,b}}$ to get $f\of{F^{j-1}\of{\ofb{a,b}}}\subset F\of{F^{j-1}\of{\ofb{a,b}}}=F^{j}\of{\ofb{a,b}}$. Hence,
			$$f^{j}\of{\ofb{a,b}}\subset f\of{F^{j-1}\of{\ofb{a,b}}}\subset F^{j}\of{\ofb{a,b}}.$$
			So, \thetag{I$_{j}$} is true and the proof of
			(\ref{l:repl:fjinFj}) is complete.
		\end{proof}
		
		\begin{proof}[Proof of (\ref{l:repl:lambda})]\renewcommand{\qedsymbol}{}
			Take any $a$ and $b$ such that $\ofa{a - b} < \beta$. We need to show that $F=f\circ g$ is $\xi$-crooked between $a$ and $b$.
			Take $c,d\in\ofb{0,1}$ such that $f\circ g\of{c}=a$ and $f\circ g\of{d}=b$. Let $c_0$ be the last point in $\ofb{c,d}$ such that $f\circ g\of{c_0}=a$. Clearly, $c_0\in[c,d)$.
			Let $d_0$ be the first point in $\ofb{c_0,d}$ such that $f\circ g\of{d_0}=b$. Clearly, $f\circ g\of{\ofb{c_0,d_0}}=\ofb{a,b}$ and $f\circ g\of{\of{c_0,d_0}}=\of{a,b}$.
			Consequently, $g\of{\ofb{c_0,d_0}}=\ofb{g\of{c_0},g\of{d_0}}$ and $g\of{\of{c_0,d_0}}=\of{g\of{c_0},g\of{d_0}}$.
			Since $\ofa{a - b} < \beta<2\epsilon$, it follows from \ref{l:repl:obs} that $\ofa{g\of{c_0}-g\of{d_0}}<\epsilon$. By \cite[Proposition 5 (ii)]{minc-transue}, $g$ is $\gamma$-crooked between $g\of{c_0}$ and $g\of{d_0}$. So, there exists $c^{\prime}$ between $c_0$ and $d_0$, and there exists $d^{\prime}$ between $c^{\prime}$ and $d_0$ such that $\ofa{g\of{d_0}-g\of{c^{\prime}}}<\gamma$ and $\ofa{g\of{c_0}-g\of{d^{\prime}}}<\gamma$.
			It follows from the choice of $c_0$ and $d_0$ that $c^{\prime}$ is between $c$ and $d$, and $d^{\prime}$ is between $c^{\prime}$ and $d$.
			Since $\operatorname{diam}\of{f\of{C}}<s\operatorname{diam}\of{C}$ for every nonempty set $C\subset\ofb{0,1}$ by \cite[(2), p. 1167]{minc-transue}, $\gamma<\xi/s$, $f\circ g\of{d_0}=b$ and $f\circ g\of{c_0}=a$ we infer that
			$\ofa{b-f\circ g\of{c^{\prime}}}<\xi$ and
			$\ofa{a-f\circ g\of{d^{\prime}}}<\xi$. Thus, $F=f\circ g$ is $\xi$-crooked between $a$ and $b$.
		\end{proof}
	\end{proof}
	
	\begin{proposition}\label{p:crookcomp}
		Let $f$ and $g$ be continuous functions of $\ofb{0,1}$ into $\ofb{0,1}$. Suppose that $f$ is $\xi$-crooked between $a$ and $b$ for some $a,b\in\ofb{0,1}$ and a positive number $\xi$. Then  $f\circ g$ is also $\xi$-crooked between $a$ and $b$.
	\end{proposition}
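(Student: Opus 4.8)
The plan is to verify the definition of $\xi$-crookedness for $f\circ g$ directly, reducing it to the $\xi$-crookedness of $f$ by pulling the auxiliary points back through $g$ with the Intermediate Value Theorem. Accordingly, I would fix arbitrary points $C,D\in\ofb{0,1}$ with $\of{f\circ g}\of{C}=a$ and $\of{f\circ g}\of{D}=b$, and aim to produce a point $C'$ between $C$ and $D$ and a point $D'$ between $C'$ and $D$ satisfying $\ofa{b-\of{f\circ g}\of{C'}}\le\xi$ and $\ofa{a-\of{f\circ g}\of{D'}}\le\xi$, which is exactly what the definition of crookedness for $f\circ g$ between $a$ and $b$ demands.

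First I would set $c=g\of{C}$ and $d=g\of{D}$, so that $f\of{c}=a$ and $f\of{d}=b$. Applying the hypothesis that $f$ is $\xi$-crooked between $a$ and $b$ to this pair $c,d$ yields a point $c'$ between $c$ and $d$ and a point $d'$ between $c'$ and $d$ such that $\ofa{b-f\of{c'}}\le\xi$ and $\ofa{a-f\of{d'}}\le\xi$.

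The heart of the argument is to transport $c'$ and $d'$ back to the domain of $f\circ g$ while preserving the required ordering. Since $c'$ lies between $g\of{C}$ and $g\of{D}$ and $g$ is continuous, the Intermediate Value Theorem applied to $g$ on the interval with endpoints $C$ and $D$ gives a point $C'$ between $C$ and $D$ with $g\of{C'}=c'$. Now $d'$ lies between $c'=g\of{C'}$ and $d=g\of{D}$, so a second application of the Intermediate Value Theorem, this time to $g$ on the interval with endpoints $C'$ and $D$, gives a point $D'$ between $C'$ and $D$ with $g\of{D'}=d'$. Note this second step succeeds for \emph{any} preimage $C'$ produced above, precisely because $g\of{C'}$ and $g\of{D}$ bracket $d'$; the possible non-monotonicity of $g$ therefore causes no difficulty.

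Finally, since $\of{f\circ g}\of{C'}=f\of{c'}$ and $\of{f\circ g}\of{D'}=f\of{d'}$, the two estimates obtained from the crookedness of $f$ transfer verbatim to $f\circ g$, while $C'$ and $D'$ have the prescribed positions. This establishes that $f\circ g$ is $\xi$-crooked between $a$ and $b$. I do not anticipate a genuine obstacle here; the only point demanding care is the bookkeeping of the ``between'' relations, so that the two invocations of the Intermediate Value Theorem return points in the correct order. This is guaranteed by the nesting of the conclusion of $f$'s crookedness ($c'$ between $c$ and $d$, and $d'$ between $c'$ and $d$), which is exactly what makes the two successive intermediate-value steps compatible.
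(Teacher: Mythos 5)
Your proof is correct and takes essentially the same route as the paper's: the paper also applies the $\xi$-crookedness of $f$ to the pair $g\of{c}$, $g\of{d}$ and then pulls the resulting two points back through $g$ by continuity, merely compressing your two explicit intermediate-value steps into the single sentence ``since $g$ is continuous.'' The nesting of the ``between'' relations that you carefully verify is exactly the bookkeeping implicit in that sentence, so nothing is missing on either side.
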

	\begin{proof}
		Suppose there are $c,d\in\ofb{0,1}$ such that $f\circ g\of{c}=a$ and $f\circ g\of{d}=b$.
		Since $f$ is $\xi$-crooked between $a$ and $b$ for some $a,b\in\ofb{0,1}$, there is a point $c_1$ between $g\of{c}$ and $g\of{d}$, and there is a point $d_1$ between $c_1$ and $g\of{d}$ such that $\ofa{b-f\of{c_1}}\le\xi$ and $\ofa{a-f\of{d_1}}\le\xi$. Since $g$ is continuous, there is a point $c^{\prime}$ between $c$ and $d$, and there is a point $d^{\prime}$ between  $c^{\prime}$ and $d$ such that $g\of{c^{\prime}}=c_1$ and $g\of{d^{\prime}}=d_1$. Observe that $\ofa{b-f\circ g\of{c^{\prime}}}=\ofa{b-f\of{c_1}}\le\xi$ and $\ofa{a-f\circ g\of{d^{\prime}}}=\ofa{a-f\of{d_1}}\le\xi$.
	\end{proof}	
	
	\begin{proposition}\label{p:compg}
		Let $\of{g_i}_{j=1}^\infty$ be a sequence of continuous functions of $\ofb{0,1}$ into $\ofb{0,1}$. For all integers $i$ and $j$ such that $1\le i< j$, let $g_{i,j}$ denote the composition $g_i\circ g_{i+1}\circ\dots g_j$. Additionally, set $g_{i,i}=g_i$. Suppose that, for each nonnegative integer $i$, the sequence $\of{g_{i,j}}_{j=i}^\infty$ uniformly converges. Let $g_{i,\infty}=\lim_{j\to\infty}g_{i,j}$. Then $g_{i,j}\circ g_{j+1,\infty}=g_{i,\infty}$ for all positive integers $i$ and $j$ such that $i\le j$.
	\end{proposition}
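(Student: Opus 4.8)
The plan is to reduce this infinite-composition identity to the completely routine associativity of \emph{finite} compositions and then pass to a limit, the only analytic input being the continuity of the fixed finite composition $g_{i,j}$ on the compact interval. First I would fix positive integers $i\le j$ and observe that for every integer $k>j$ the definition of $g_{i,k}$ together with associativity of composition yields
\[
g_{i,k}=g_i\circ g_{i+1}\circ\cdots\circ g_k=\left(g_i\circ\cdots\circ g_j\right)\circ\left(g_{j+1}\circ\cdots\circ g_k\right)=g_{i,j}\circ g_{j+1,k}.
\]
(When $i=j$ this reads $g_{i,k}=g_i\circ g_{i+1,k}$, consistent with the convention $g_{i,i}=g_i$.) This identity holds for \emph{all} $k>j$, so it is legitimate to let $k\to\infty$ in it.

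Next I would pass to the limit on both sides. By hypothesis the left-hand side $g_{i,k}$ converges uniformly to $g_{i,\infty}$ as $k\to\infty$, so in particular $g_{i,k}(t)\to g_{i,\infty}(t)$ for each $t\in\ofb{0,1}$. For the right-hand side, the inner sequence $g_{j+1,k}$ converges uniformly, hence pointwise, to $g_{j+1,\infty}$; since $g_{i,j}$ is a composition of finitely many continuous functions on $\ofb{0,1}$, it is itself continuous, and therefore $g_{i,j}\of{g_{j+1,k}(t)}\to g_{i,j}\of{g_{j+1,\infty}(t)}$ for each fixed $t$. Because $g_{i,k}(t)=g_{i,j}\of{g_{j+1,k}(t)}$ for every $k>j$, equating the two limits gives $g_{i,\infty}(t)=g_{i,j}\of{g_{j+1,\infty}(t)}$ for all $t\in\ofb{0,1}$, which is exactly $g_{i,j}\circ g_{j+1,\infty}=g_{i,\infty}$.

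There is essentially no serious obstacle here: the content is the interchange of the limit with the outer map $g_{i,j}$, and this is immediate from the continuity of $g_{i,j}$ (mere pointwise convergence of $g_{j+1,k}$ suffices to identify the limit). I would note only that one must use associativity for finite compositions \emph{before} taking limits, since the objects $g_{i,\infty}$ and $g_{j+1,\infty}$ are infinite compositions whose manipulation is not a priori justified; the finite identity is what legitimizes everything, and the uniform convergence assumed in the statement guarantees all the relevant limits exist.
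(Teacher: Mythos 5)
Your proof is correct. The paper states Proposition~\ref{p:compg} without any proof, evidently regarding it as routine, and your argument --- the finite associativity identity $g_{i,k}=g_{i,j}\circ g_{j+1,k}$ for all $k>j$, followed by a pointwise passage to the limit justified by the continuity of the fixed finite composition $g_{i,j}$ --- is precisely the standard argument the authors must have had in mind, including your correct observation that mere pointwise convergence of $g_{j+1,k}$ suffices even though uniform convergence is assumed.
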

	
	In the next proposition we will use the same notation as in the previous one.
	\begin{proposition}\label{p:smfolds} Let $\of{g_i}_{j=1}^\infty$ be a sequence with the same properties as in Proposition \ref{p:compg}.
		Suppose also that for each $\lambda>0$ there is a positive integer $j$, and there exist positive numbers $\beta<\lambda$ and $\xi<\beta/4$ satisfying the following condition
		\begin{equation*}\label{p:smfolds:*}
			\text{for every $a$ and $b$ such that\ } \ofa{a - b} < \beta \text{,\ $g_{1,j}$ is $\xi$-crooked between $a$ and $b$.} \tag{$*_{1,j}$}
		\end{equation*}
		Then $g_{1,\infty}$ has the small folds property.
	\end{proposition}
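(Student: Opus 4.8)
The plan is to reduce the statement directly to the two preceding results: the factorization identity of Proposition~\ref{p:compg} together with the stability of crookedness under precomposition established in Proposition~\ref{p:crookcomp}. The guiding observation is that whether a composition $f\circ g$ is $\xi$-crooked between two prescribed values depends only on the outer map $f$; consequently, once a finite initial block $g_{1,j}$ of the infinite composition is crooked enough, the entire tail $g_{j+1,\infty}$ becomes irrelevant.

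First I would fix an arbitrary $\lambda$ with $0<\lambda<1$, as required by Definition~\ref{def:smallfolds}. The standing hypothesis applied to this $\lambda$ yields a positive integer $j$ and positive numbers $\beta<\lambda$ and $\xi<\beta/4$ such that $g_{1,j}$ is $\xi$-crooked between $a$ and $b$ whenever $\ofa{a-b}<\beta$. These very numbers $\beta$ and $\xi$ will serve as the witnesses demanded by the small folds property, with no further adjustment.

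Next I would invoke Proposition~\ref{p:compg} with this $j$ to obtain
\[
  g_{1,\infty}=g_{1,j}\circ g_{j+1,\infty}.
\]
Fix any $a,b\in\ofb{0,1}$ with $\ofa{a-b}<\beta$. By the choice of $j$, $\beta$ and $\xi$, the outer map $g_{1,j}$ is $\xi$-crooked between $a$ and $b$, so Proposition~\ref{p:crookcomp}, applied with outer function $g_{1,j}$ and inner function $g_{j+1,\infty}$, shows that the composition $g_{1,j}\circ g_{j+1,\infty}=g_{1,\infty}$ is also $\xi$-crooked between $a$ and $b$. Since $a$ and $b$ were arbitrary subject to $\ofa{a-b}<\beta$, the pair $(\beta,\xi)$ verifies the small folds condition for this $\lambda$; as $\lambda\in(0,1)$ was arbitrary, $g_{1,\infty}$ has the small folds property.

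I do not expect a genuine obstacle here: the single piece of substantive content, namely that the crookedness of a composition is governed entirely by its outermost factor, has already been isolated in Proposition~\ref{p:crookcomp}, while the identification of $g_{1,\infty}$ with a finite composition followed by its tail is exactly Proposition~\ref{p:compg}. The only point requiring care is that the hypothesis furnishes a single $j$ that works uniformly for all pairs $a,b$ with $\ofa{a-b}<\beta$, which matches precisely the uniform quantifier structure of the small folds property, so the two statements align without any rearrangement of quantifiers.
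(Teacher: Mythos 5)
Your proposal is correct and follows essentially the same route as the paper's own proof: invoke Proposition~\ref{p:compg} to write $g_{1,\infty}=g_{1,j}\circ g_{j+1,\infty}$ and then apply Proposition~\ref{p:crookcomp} with outer map $g_{1,j}$, noting that the hypothesis hands you $\beta<\lambda$ and $\xi<\beta/4$ exactly as Definition~\ref{def:smallfolds} requires. In fact your version is slightly cleaner, since the paper's final line says to apply Proposition~\ref{p:crookcomp} with ``$g=g_{1,\infty}$'' where the inner function should be $g_{j+1,\infty}$, a small slip that your write-up gets right.
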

	\begin{proof}
		In order to prove the proposition it is enough to show \thetag{$*_{1,\infty}$} that is \eqref{p:smfolds:*} with $g_{1,j}$ replaced by $g_{1,\infty}$.
		For that purpose observe that $g_{1,\infty}=g_{1,j}\circ g_{j+1,\infty}$ by \ref{p:compg}.
		Now, use Proposition \ref{p:crookcomp} with $f=g_{1,j}$ and $g=g_{1,\infty}$.
	\end{proof}
	
	The following proposition is well-known. We state it here for convenience. Note that $F$ in this proposition does not have to be continuous. Also, a similar proposition with $\ofb{0,1}$ replaced by an arbitrary compact metric space is true.
	
	\begin{proposition}\label{p:f^n-F^n}
		Suppose $n$ is a positive integer and $f:\ofb{0,1}\to\ofb{0,1}$ be a continuous function. Then, for each $\epsilon>0$ there exists $\eta>0$ with the property $\ofa{f^n\of{t}-F^n\of{t}}<\epsilon$ for all $t\in\ofb{0,1}$ and each function $F:\ofb{0,1}\to\ofb{0,1}$ such that $\ofa{f\of{t}-F\of{t}}<\eta$ for all $t\in\ofb{0,1}$.
	\end{proposition}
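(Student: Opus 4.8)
The plan is to proceed by induction on $n$. The base case $n=1$ is immediate: given $\epsilon>0$, simply take $\eta=\epsilon$, since then the hypothesis $\ofa{f\of{t}-F\of{t}}<\eta=\epsilon$ is exactly the conclusion. The substance of the argument lies in the inductive step, and the single analytic fact I would lean on is that $f$, being continuous on the compact interval $\ofb{0,1}$, is \emph{uniformly} continuous there.

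For the inductive step, I would assume the statement holds for $n-1$ and fix $\epsilon>0$. Writing $f^n=f\circ f^{n-1}$ and $F^n=F\circ F^{n-1}$, I would split via the triangle inequality:
\begin{equation*}
  \ofa{f^n\of{t}-F^n\of{t}}\le\ofa{f\of{f^{n-1}\of{t}}-f\of{F^{n-1}\of{t}}}+\ofa{f\of{F^{n-1}\of{t}}-F\of{F^{n-1}\of{t}}}.
\end{equation*}
The second summand is at most $\eta$ for \emph{every} argument, by the assumption on $F$, so it will be controlled directly once $\eta$ is chosen small. The first summand is where uniform continuity enters: choose $\delta>0$ so that $\ofa{x-y}<\delta$ forces $\ofa{f\of{x}-f\of{y}}<\epsilon/2$, and then it suffices to guarantee $\ofa{f^{n-1}\of{t}-F^{n-1}\of{t}}<\delta$ for all $t$.

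That last requirement is supplied by the inductive hypothesis, applied with target accuracy $\delta$ in place of $\epsilon$: it produces some $\eta'>0$ such that $\eta'$-closeness of $F$ to $f$ yields $\delta$-closeness of $F^{n-1}$ to $f^{n-1}$ uniformly in $t$. I would then set $\eta=\min\of{\eta',\epsilon/2}$. With this choice the first summand is below $\epsilon/2$ and the second is below $\eta\le\epsilon/2$, giving $\ofa{f^n\of{t}-F^n\of{t}}<\epsilon$ as required.

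There is no genuine obstacle here; the result is standard continuity of iteration under uniform perturbation. The only point demanding care is the \emph{order} of the quantifiers in the inductive step—one must fix $\delta$ from uniform continuity first, then invoke the inductive hypothesis to obtain $\eta'$ matching that $\delta$, and only afterwards set $\eta$. Note also that the argument never differentiates $f$ or $F$ and never uses continuity of $F$, which is consistent with the remark preceding the statement that $F$ need not be continuous and that $\ofb{0,1}$ could be replaced by any compact metric space (with $\ofa{\cdot}$ read as the metric).
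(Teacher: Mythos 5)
Your proof is correct and follows essentially the same route as the paper's: induction on $n$, with uniform continuity of $f$ supplying a $\delta$ for accuracy $\epsilon/2$, the inductive hypothesis invoked at accuracy $\delta$ to produce $\eta'$, and the triangle inequality applied to the same decomposition $\ofa{f^n\of{t}-F^n\of{t}}\le\ofa{f\of{f^{n-1}\of{t}}-f\of{F^{n-1}\of{t}}}+\ofa{f\of{F^{n-1}\of{t}}-F\of{F^{n-1}\of{t}}}$. The only cosmetic difference is that the paper builds the bound $\eta\le\epsilon/2$ into the choice from the inductive hypothesis, while you take $\eta=\min\of{\eta',\epsilon/2}$ at the end; the quantifier order you flag is handled identically in both.
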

	\begin{proof}
		The proposition is trivial if $n=1$. Suppose $n>1$ and the proposition is true for $n-1$. We will prove that it is also true for $n$.
		
		Take an arbitrary $\epsilon>0$. Since $f$ is continuous, there is $\delta>0$ such that $\ofa{f\of{a}-f\of{b}}<\epsilon/2$ for all $a,b\in\ofb{0,1}$ such that $\ofa{a-b}<\delta$.
		Using the proposition with $n-1$ and $\epsilon$ replaced by $\delta$, we infer that there is a positive number $\eta\le\epsilon/2$ with the property
		$\ofa{f^{n-1}\of{t}-F^{n-1}\of{t}}<\delta$ for all $t\in\ofb{0,1}$ and each function $F:\ofb{0,1}\to\ofb{0,1}$ such that $\ofa{f\of{t}-F\of{t}}<\eta$ for all $t\in\ofb{0,1}$. Suppose $F$ is a specific function such that $\ofa{f\of{t}-F\of{t}}<\eta$ for all $t\in\ofb{0,1}$. In particular, $\ofa{f\of{F^{n-1}\of{t}}-F\of{F^{n-1}\of{t}}}<\eta\le\epsilon/2$ for all $t\in\ofb{0,1}$. It follows from the choices of $\eta$ and $\delta$ that
		$\ofa{f\of{f^{n-1}\of{t}}-f\of{F^{n-1}\of{t}}}<\epsilon/2$ for all $t\in\ofb{0,1}$. Consequently,
		$\ofa{f^{n}\of{t}-F^{n}\of{t}}=\ofa{f^{n}\of{t}-f\of{F^{n-1}\of{t}}+f\of{F^{n-1}\of{t}}-F^{n}\of{t}}\le
		\ofa{f\of{f^{n-1}\of{t}}-f\of{F^{n-1}\of{t}}}+\ofa{f\of{F^{n-1}\of{t}}-F\of{F^{n-1}\of{t}}}<\epsilon/2+\epsilon/2=\epsilon$ for all $t\in\ofb{0,1}$.
	\end{proof}
	
	\begin{theorem}\label{t:smfolds}
		There is a map $f:\ofb{0,1}\to\ofb{0,1}$ such that
		\begin{enumerate}
			
			\item\label{t:smfolds:ps} the inverse limit of copies of $\ofb{0,1}$ with $f$ as the bonding map is a pseudo-arc,
			\item\label{t:smfolds:tr} $f$ is topologically exact, and
			\item\label{t:smfolds:sfp} $f$ has the small folds property.
		\end{enumerate}
	\end{theorem}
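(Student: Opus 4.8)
The plan is to re-run the inductive construction of \cite{minc-transue} recalled above, but with Lemma~\ref{l:repl} in place of the original \cite[Lemma, p.~1167]{minc-transue}. Since Lemma~\ref{l:repl} reproduces (in its conditions (1)--(6)) exactly the conclusions that the original lemma supplies to the induction, the entire scheme producing a pseudo-arc bonding map survives verbatim, while the two extra conclusions of Lemma~\ref{l:repl}, condition (\ref{l:repl:fjinFj}) and condition (\ref{l:repl:lambda}), will buy topological exactness and the small folds property. Concretely, I would fix an admissible map $f_1$ (for instance the second iterate of the full tent map), set $g_1=f_1$, and for each integer $i\ge 2$ apply Lemma~\ref{l:repl} to $f_{i-1}$ with suitable positive parameters $\eta_i,\delta_i,\lambda_i$ to obtain an integer $n(i)$, a map $g_i$, and the map $f_i=f_{i-1}\circ g_i$. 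In the notation of Proposition~\ref{p:compg} this yields $g_{1,i}=g_1\circ\cdots\circ g_i=f_i$, so the map to be constructed is $f=g_{1,\infty}=\lim_i f_i$.

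For the pseudo-arc property I would choose $\delta_i$ and an upper bound on $\eta_i$ exactly as in \cite{minc-transue}, so that conditions (i)--(iii) of the summarized construction hold: $\ofa{f_{i+1}(t)-f_i(t)}<2^{-i}$, $f_i^{n(k)}$ is $(2^{-k}-2^{-k-i})$-crooked for $k\le i$, and $f_i^{n(k)}(\ofb{a,b})=\ofb{0,1}$ whenever $b-a\ge 2^{-k}$ and $k\le i$. Condition (i) makes $(f_i)$ uniformly Cauchy, so $f$ exists, and condition (ii) guarantees, exactly as in \cite{minc-transue}, that $\varprojlim(\ofb{0,1},f)$ is the pseudo-arc, settling (\ref{t:smfolds:ps}). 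In addition I would require $\lambda_i\to 0$.

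The small folds property (\ref{t:smfolds:sfp}) will follow from Proposition~\ref{p:smfolds}, whose hypotheses I must supply; this is the genuinely new analytic point. The convergence hypothesis of Propositions~\ref{p:compg} and \ref{p:smfolds}, namely that $(g_{i,j})_{j\ge i}$ converges uniformly for every $i$, is arranged by an extra smallness demand on the $\eta_i$'s: when step $j+1$ is reached the maps $g_1,\dots,g_j$, hence the compositions $g_{i,j}$ and their moduli of continuity $\omega_{i,j}$ for $i\le j$, are already fixed, so I may take $\eta_{j+1}$ small enough that $\omega_{i,j}(\eta_{j+1})<2^{-j}$ for all $i\le j$. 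Since $\ofa{g_{j+1}(t)-t}<\eta_{j+1}$ by condition (\ref{l:repl:F-f g-id}), this forces $\ofa{g_{i,j+1}(t)-g_{i,j}(t)}=\ofa{g_{i,j}(g_{j+1}(t))-g_{i,j}(t)}<2^{-j}$, so each $(g_{i,j})_j$ is uniformly Cauchy. Condition (\ref{l:repl:lambda}) applied at step $j$ then provides $\beta_j<\lambda_j$ and $\xi_j<\beta_j/4$ for which $g_{1,j}=f_j$ is $\xi_j$-crooked between any $a,b$ with $\ofa{a-b}<\beta_j$; as $\lambda_j\to 0$, for every $\lambda>0$ some $j$ satisfies $\beta_j<\lambda$, which is precisely the hypothesis of Proposition~\ref{p:smfolds}. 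Hence $f=g_{1,\infty}$ has the small folds property.

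Finally, topological exactness (\ref{t:smfolds:tr}) comes from the monotonicity in condition (\ref{l:repl:fjinFj}). Fix $\ofb{a,b}$ with $b-a\ge 2^{-k}$; keeping $\eta_i\le 2^{-k}$ for all $i>k$ (a further smallness demand, compatible with the ones above), condition (\ref{l:repl:fjinFj}) gives $f_{i-1}^{n(k)}(\ofb{a,b})\subset f_i^{n(k)}(\ofb{a,b})$ for each $i>k$, while (iii) gives $f_k^{n(k)}(\ofb{a,b})=\ofb{0,1}$; hence $f_i^{n(k)}(\ofb{a,b})=\ofb{0,1}$ for all $i\ge k$. Since $f_i\to f$ uniformly, Proposition~\ref{p:f^n-F^n} yields $f_i^{n(k)}\to f^{n(k)}$ uniformly, and a routine compactness argument (pull back each $y\in\ofb{0,1}$ to $x_i\in\ofb{a,b}$ with $f_i^{n(k)}(x_i)=y$ and pass to a convergent subsequence) upgrades these equalities to $f^{n(k)}(\ofb{a,b})=\ofb{0,1}$. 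As every nonempty open set contains such an interval, $f$ is topologically exact. The main obstacle is thus not any single deep step but the simultaneous bookkeeping of the countably many constraints on the $\eta_i$: they must be small enough at once to preserve (i)--(iii), to drive the exactness chain, and to force convergence of every tail composition $(g_{i,j})_j$; each is an ``$\eta_i$ sufficiently small'' requirement imposed after the finitely many relevant maps are already built, so all can be met, but one must check that shrinking $\eta_i$ never conflicts with the crookedness conclusions of Lemma~\ref{l:repl}.
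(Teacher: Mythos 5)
Your proposal is correct and takes essentially the same route as the paper's own proof: the paper likewise re-runs the inductive scheme of \cite{minc-transue} with Lemma~\ref{l:repl} in place of the original lemma (choosing $\lambda_i=2^{-i}$, $\delta_i=2^{-i}-2^{-2i}$, and $\eta$ below $2^{-i}$, below a modulus-of-continuity threshold $\eta^{\prime}$ playing the role of your $\omega_{i,j}$ condition, and below the numbers $\eta_k$ from Proposition~\ref{p:f^n-F^n} that preserve the crookedness conditions), then gets the small folds property from condition (\ref{l:repl:lambda}) via Proposition~\ref{p:smfolds} and exactness from condition (\ref{l:repl:fjinFj}) together with condition (iii), exactly as you do. The only difference is cosmetic: you spell out the compactness argument passing from $f_i^{n(k)}\of{\ofb{a,b}}=\ofb{0,1}$ to $f^{n(k)}\of{\ofb{a,b}}=\ofb{0,1}$, which the paper leaves implicit.
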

	\begin{proof}
		The proof of this theorem is very similar to that of \cite[Theorem, p 1169]{minc-transue}. As it was done in \cite{minc-transue}, we construct a sequence of positive integers $n\of{1}, n\of{2}, \dots$ and a sequence of admissible functions $f_1, f_2, \dots$ of $\ofb{0,1}$ onto itself. In \cite{minc-transue}, the lemma was used with $f=f_{i-1}$ to define $f_i$ as $F=f\circ g$ for $i\ge 2$. We will use here Lemma \ref{l:repl} instead and remember $g$ as $g_i$ for future use.  So, we will also construct another sequence of continuous functions $g_2,g_3,\dots$ of $\ofb{0,1}$ onto itself. Additionally, we set $g_1=f_1$. This allows us to use the notation from Proposition  \ref{p:compg}. In particular, $f_i=g_{1,i}$ for each integer $i$.
		
		Our construction will have the following properties for each positive integer $i$:
		\begin{list}{(\roman{lcount})}{\usecounter{lcount}}
			\item if $i>1$, then $\ofa{g_{k,i-1}\of{t}-g_{k,i}\of{t}}<2^{-i}$ for each $t\in\ofb{0,1}$ and each positive integer $k\le i-1$,
			\item $f_i^{n\of{k}}$ is $\of{2^{-k}-2^{-k-i}}$-crooked for each positive integer $k\le i$,
			\item if $0\le a<b\le1$ and $b-a\ge 2^{-k}$, then $f_i^{n\of{k}}\of{\ofb{a,b}}=\ofb{0,1}$ for each positive integer $k\le i$, and
			\item there are positive numbers $\beta<2^{-i}$ and $\xi<\beta/4$ satisfying the condition:
			for every $a$ and $b$ such that $\ofa{a - b} < \beta$, $g_1^i=f_i$ is $\xi$-crooked between $a$ and $b$.
		\end{list}
		
		To construct $n\of{1}$ and $f_1$, we use Lemma \ref{l:repl} with any admissible map $f$, $\eta=1/2$, $\delta=1/4$ and $\lambda=1/2$. Then we set $n\of{1}=n$, $f_1=F$ and $g_1=F$ where $n$ and $F$ are from the lemma. We assume that $n\of{1},\dots,n\of{i-1}$, $f_1,\dots,f_{i-1}$ and $g_1,\dots,g_{i-1}$ have already been constructed for some integer $i\ge2$. We will construct $n\of{i}$, $f_i$ and $g_i$.
		
		Since each of the functions  $g_1,\dots,g_{i-1}$ is continuous, there is a positive number $\eta^{\prime}$ with the property that if $g:\ofb{0,1}\to\ofb{0,1}$ is a function such that $\ofa{g\of{t}-t}<\eta^{\prime}$ for all $t\in\ofb{0,1}$, then $\ofa{g_{k,i-1}\of{t}-g_{k,i-1}\circ g\of{t}}<2^{-i}$ for each positive integer $k\le i-1$ and all $t\in\ofb{0,1}$.
		
		For each positive integer $k\le i-1$, use Proposition \ref{p:f^n-F^n} with $n=n\of{k}$, $f=F_{i-1}$ and $\epsilon=2^{-k-i-1}$ to get a positive number $\eta_k$ with the property
		\begin{equation*}\label{t:smfolds:eq}
			\ofa{f_{i-1}^{n\of{k}}\of{t}-F^{n\of{k}}\of{t}}<2^{-k-i-1} \quad \text{for all $t\in\ofb{0,1}$} \tag{$*$}
		\end{equation*}
		and each function $F:\ofb{0,1}\to\ofb{0,1}$ such that $\ofa{f_{i-1}\of{t}-F\of{t}}<\eta_k$ for all $t\in\ofb{0,1}$.
		Observe that it follows from condition (ii) for $i-1$, \eqref{t:smfolds:eq} and \cite[Proposition 2]{minc-transue} that
		\begin{equation*}\label{t:smfolds:eq2}
			F^{n\of{k}} \text{ is $(2^{-k}-2^{-k-i})$-crooked}. \tag{$**$}
		\end{equation*}
		
		Let $\eta$ be a positive number less than $\min\of{2^{-i},\eta^{\prime},\eta_1,\eta_2,\dots,\eta_{i-1}}$. Now we use Lemma \ref{l:repl} with $\eta$ we defined, $f=f_{i-1}$, $\delta=2^{-i}-2^{-i-i}$ and $\lambda=2^{-i}$. Then we set $n\of{i}=n$, $f_i=F$ and $g_i=g$ where $n$, $F$ and $g$ are obtained from the lemma. Clearly, $f_i=f_{i-1}\circ g_i$ and $f_i=g_{1,i}$. Observe that (i) is satisfied since $\eta<\eta^{\prime}$. Condition (ii) follows from \eqref{t:smfolds:eq2} since $\eta<\eta_k$ for each positive integer $k\le i-1$. To prove (iii), it is enough to observe that if $b-a\ge 2^{-i}>\eta$, then $f_{i-1}^j\of{\ofb{a,b}}\subset f_i^j\of{\ofb{a,b}}$ for each positive integer $j$, see \ref{l:repl} (\ref{l:repl:fjinFj}). Finally, (iv) follows from \ref{l:repl} (\ref{l:repl:lambda}) since $\lambda=2^{-i}$.
		
		By (i), the sequence $\of{g_{i,j}}_{j=i}^\infty$ converges uniformly for each positive integer $i$. In particular, $\of{g_{1,j}}_{j=1}^\infty=\of{f_j}_{j=1}^\infty$ converges uniformly. Denote its limit by $f$.  Our proof of \ref{t:smfolds} (\ref{t:smfolds:ps}) and \ref{t:smfolds} (\ref{t:smfolds:tr}) exactly follows \cite{minc-transue}.
		Applying Propositions 1 and 3 in \cite{minc-transue}, we infer that $f^{n\of{k}}$ is $\of{2^{-k}}$-crooked for each positive integer $k$. Applying Propositions 1 and 4 in \cite{minc-transue}, we get the result that the inverse limit of copies of $\ofb{0,1}$ with $f$ as the bonding map is a pseudo-arc.
		Condition (iii) of the construction implies that if $0\le a<b\le1$ and $b-a\ge2^{-k}$, then $f^{n\of{k}}\of{\ofb{a,b}}=\ofb{0,1}$. It follows that $f$ is topologically exact. Since the sequence $\of{g_{i,j}}_{j=i}^\infty$ converges uniformly for each positive integer $i$, condition (iv) of the construction allows us to use Proposition  \ref{p:smfolds} and get the result that $f$ has the small folds property.
	\end{proof}
	
\begin{theorem}\label{t:mainfactorization}
	There exists a topologically mixing map $f$ of $\ofb{0,1}$ onto itself such that the inverse limit space $\varprojlim\of{\ofb{0,1},f}$ is the pseudo-arc, and for any nondegenerate dendrite $D$, there exist onto maps $g:\ofb{0,1}\to D$ and $h:D\to\ofb{0,1}$ such that $h\circ g=f$. Moreover, the map $F=g\circ h$ of $D$ onto itself is topologically mixing, the natural extensions of $f$ and $F$ are conjugate, and the inverse limit space $\varprojlim\of{D,F}$ is the pseudo-arc.
	\end{theorem}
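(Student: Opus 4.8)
The plan is to assemble the theorem from the three ingredients developed above: the interval map produced by Theorem \ref{t:smfolds}, the Factorization Lemma \ref{l:factf}, and the transfer of dynamics across the two inverse limits recorded in Proposition \ref{p:hg-gh}. First I would take $f:[0,1]\to[0,1]$ to be the map furnished by Theorem \ref{t:smfolds}. By that theorem $f$ is topologically exact, hence topologically mixing, has the small folds property, and $\varprojlim([0,1],f)$ is the pseudo-arc. This already secures the first assertion of the theorem and, crucially, makes $f$ an admissible input to the Factorization Lemma.

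Next I would fix an arbitrary nondegenerate dendrite $D$. Since $f$ is a continuous surjection with the small folds property, Lemma \ref{l:factf} supplies continuous surjections $g:[0,1]\to D$ and $h:D\to[0,1]$ with $h\circ g=f$ and with the extra property that $\operatorname{int}_{[0,1]}(h(U))\neq\emptyset$ for every nonempty open $U\subset D$. Setting $F=g\circ h$, I would invoke Proposition \ref{p:hg-gh} with $X=[0,1]$ and $Y=D$: its homeomorphism part gives that $\varprojlim([0,1],f)=\varprojlim(X,h\circ g)$ and $\varprojlim(D,F)=\varprojlim(Y,g\circ h)$ are homeomorphic, so $\varprojlim(D,F)$ is the pseudo-arc as well. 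Part (\ref{p:hg-gh:tm}), together with surjectivity of $g$, yields that $F=g\circ h$ is topologically mixing on $D$; the interior condition would even give topological exactness of $F$ through part (\ref{p:hg-gh:nnk2p}), although only mixing is claimed.

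The one assertion not literally contained in Proposition \ref{p:hg-gh} is the conjugacy of the natural extensions, and this is where I would concentrate the argument. I would realize the homeomorphism explicitly as
\[
\Phi(x_0,x_1,x_2,\dots)=(g(x_0),g(x_1),g(x_2),\dots).
\]
First $\Phi$ is well defined into $\varprojlim(D,F)$: from $x_i=(h\circ g)(x_{i+1})$ one gets $F(g(x_{i+1}))=g(h(g(x_{i+1})))=g(x_i)$. Injectivity holds because $g(x_i)=g(x_i')$ forces $x_i=(h\circ g)(x_{i+1})=(h\circ g)(x_{i+1}')=x_i'$, and surjectivity follows by sending $(y_0,y_1,\dots)$ to the thread $x_i=h(y_{i+1})$; continuity of $\Phi$ and of its coordinatewise inverse then makes $\Phi$ a homeomorphism of compacta. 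The intertwining is a direct coordinate computation: using $g\circ(h\circ g)=(g\circ h)\circ g$, both $\Phi\circ\sigma_f$ and $\sigma_F\circ\Phi$ send $(x_0,x_1,\dots)$ to $(F(g(x_0)),g(x_0),g(x_1),\dots)$, so $\Phi$ conjugates $\sigma_f$ to $\sigma_F$.

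I expect the genuine difficulty to lie not in this final assembly but in the ingredients already in hand, namely Theorem \ref{t:smfolds} and the Factorization Lemma, which carry the real weight. Within the present argument the only subtle point is confirming that the abstract homeomorphism of Proposition \ref{p:hg-gh} can be chosen to intertwine the shift maps; the explicit formula for $\Phi$ makes this transparent, since applying $g$ in each coordinate manifestly commutes with prepending a new image coordinate, so the homeomorphism of inverse limits upgrades at no cost to a conjugacy of the natural extensions.
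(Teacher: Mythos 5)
Your proposal is correct and takes essentially the same route as the paper, whose proof of this theorem is exactly the one-line assembly of Theorem \ref{t:smfolds}, Lemma \ref{l:factf} and Proposition \ref{p:hg-gh} that you carry out. Your explicit coordinatewise map $\Phi(x_0,x_1,\dots)=(g(x_0),g(x_1),\dots)$ correctly fills in the one detail the paper leaves implicit, namely that the homeomorphism of the inverse limits can be chosen to conjugate $\sigma_f$ to $\sigma_F$.
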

\begin{proof}
  The theorem follows easily from Lemma \ref{l:factf}, Theorem \ref{t:smfolds} and Proposition \ref{p:hg-gh}.
\end{proof}
Our construction gives in fact the following stronger result.
\begin{theorem}\label{t:mfk}
	There exists a topologically mixing map $f$ of $\ofb{0,1}$ onto itself such that the inverse limit space $\varprojlim\of{\ofb{0,1},f}$ is the pseudo-arc, and for any $k\in\mathbb{N}$ and any nondegenerate dendrites $D_1,\ldots,D_k$, there exist onto maps $g_i:\ofb{0,1}\to D_i$ and $h_i:D_i\to\ofb{0,1}$, for $i=1,\ldots,k$, such that $h_i\circ g_i=f$. Moreover, the map $F_i = g_i \circ \cdots \circ h_i$ of $D_i$ onto itself is topologically mixing, the natural extensions of $f$ and $F_i$ are conjugate, and the inverse limit space $\varprojlim\of{D_i,F_i}$ is the pseudo-arc, for $i=1,\ldots,k$.
\end{theorem}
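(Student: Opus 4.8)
The plan is to derive Theorem \ref{t:mfk} directly from the three tools already in place, the only genuinely new point being that the interval map may be chosen once and for all, independently of the dendrites and of $k$. First I would fix the map $f:\ofb{0,1}\to\ofb{0,1}$ furnished by Theorem \ref{t:smfolds}: this single $f$ is topologically exact (hence topologically mixing, since exactness implies mixing), it has the small folds property, and $\varprojlim\of{\ofb{0,1},f}$ is the pseudo-arc. Its construction does not mention any dendrite, which is exactly what lets the same $f$ serve all the $D_i$ at once. Given $k$ and $D_1,\dots,D_k$, I would then apply the Factorization Lemma \ref{l:factf} separately to the pair $(f,D_i)$, obtaining for each $i$ continuous surjections $g_i:\ofb{0,1}\to D_i$ and $h_i:D_i\to\ofb{0,1}$ with $h_i\circ g_i=f$ and with $\operatorname{int}_{\ofb{0,1}}\of{h_i(U)}\ne\emptyset$ for every nonempty open $U\subset D_i$. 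Setting $F_i=g_i\circ h_i$ completes the construction.

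Next I would transfer the dynamical properties of $f$ to each $F_i$ through Proposition \ref{p:hg-gh}, applied with $X=\ofb{0,1}$, $Y=D_i$, $g=g_i$ and $h=h_i$. Since $h_i\circ g_i=f$ and $g_i\circ h_i=F_i$, the proposition produces a homeomorphism between $\varprojlim\of{\ofb{0,1},f}$ and $\varprojlim\of{D_i,F_i}$; as the former is the pseudo-arc, so is the latter. Because $g_i$ is a surjection, $f$ is topologically mixing, and the interior condition on $h_i$ holds, parts (\ref{p:hg-gh:tm}) and (\ref{p:hg-gh:nnk2p}) of Proposition \ref{p:hg-gh} show that $F_i$ is topologically mixing (in fact topologically exact). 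At this stage every claim of the theorem is established except the conjugacy of the natural extensions.

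The step I expect to require genuine care --- and hence the main obstacle, since Proposition \ref{p:hg-gh} as stated only asserts that the inverse limits are homeomorphic --- is verifying that the homeomorphism there intertwines the shift homeomorphisms $\sigma_f$ and $\sigma_{F_i}$. I would make the map explicit: reading off the odd coordinates of the interleaved thread exhibits it as $\Phi_i\of{x_0,x_1,x_2,\dots}=\of{g_i\of{x_1},g_i\of{x_2},\dots}$, a homeomorphism from $\varprojlim\of{\ofb{0,1},f}$ onto $\varprojlim\of{D_i,F_i}$. A short computation then gives
\begin{equation*}
\Phi_i\of{\sigma_f\of{x_0,x_1,\dots}}=\of{g_i\of{x_0},g_i\of{x_1},\dots}=\sigma_{F_i}\of{\Phi_i\of{x_0,x_1,\dots}},
\end{equation*}
where the first equality is immediate from the definitions of $\sigma_f$ and $\Phi_i$, and the second uses $F_i\of{g_i\of{x_1}}=\of{g_i\circ h_i\circ g_i}\of{x_1}=g_i\of{f\of{x_1}}=g_i\of{x_0}$, the last step being the defining relation $x_0=f\of{x_1}$ of the inverse limit. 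Thus $\Phi_i$ conjugates $\sigma_f$ to $\sigma_{F_i}$, so the natural extensions of $f$ and $F_i$ are conjugate for each $i$. Running this argument over $i=1,\dots,k$ completes the proof; the remaining work is routine, the substantive difficulties having already been resolved in Theorem \ref{t:smfolds} and Lemma \ref{l:factf}.
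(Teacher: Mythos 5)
Your argument is internally correct, and it runs on exactly the paper's machinery: the single map $f$ from Theorem \ref{t:smfolds} (topologically exact, hence mixing, with the small folds property and pseudo-arc inverse limit), one application of Lemma \ref{l:factf} per dendrite, and Proposition \ref{p:hg-gh} for the transfer of mixing and of the pseudo-arc. Your explicit conjugacy $\Phi_i\of{x_0,x_1,\dots}=\of{g_i\of{x_1},g_i\of{x_2},\dots}$ is the right map, the computation $F_i\of{g_i\of{x_1}}=g_i\of{f\of{x_1}}=g_i\of{x_0}$ is correct, and you are right that this intertwining step needs to be supplied: Proposition \ref{p:hg-gh} as stated gives only a homeomorphism of inverse limits, and the paper leaves the equivariance implicit (Theorem \ref{t:mfk} is given no proof beyond the remark that the construction yields it, together with a diagram). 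However, you have silently resolved the ellipsis in $F_i=g_i\circ\cdots\circ h_i$ as $F_i=g_i\circ h_i$, whereas the paper's diagram following the theorem defines, for $k=2$, $F_1=g_1\circ h_2\circ g_2\circ h_1$: the intended $F_i$ is the composition around the whole cycle through all $k$ dendrites, obtained by threading $D_1,\dots,D_k$ into a single periodic zigzag over $\varprojlim\of{\ofb{0,1},f}$, with the bottom row supplying explicit semi-conjugacies $g_i\circ h_{i+1}:D_{i+1}\to D_i$ between the dendrite systems (item (1) of the introduction's version).

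The comparison cuts both ways. For the paper's cyclic $F_i$ one has $F_i=g\circ h$ with $g=g_i$ and $h=h_{i+1}\circ g_{i+1}\circ\cdots\circ h_i$, whence $h\circ g=f^k$; the interleaving argument then gives $\sigma_{F_i}$ conjugate to $\sigma_{f^k}$, i.e.\ to $\sigma_f^k$ --- enough for the mutual conjugacy of the $\sigma_{F_i}$ and for all the inverse limits to be the pseudo-arc, but not, on its face, for conjugacy with $\sigma_f$ itself when $k\ge2$. Your reading makes the clause ``the natural extensions of $f$ and $F_i$ are conjugate'' literally true, so your proposal establishes every dynamical claim of the theorem for a legitimate, and in this respect cleaner, choice of $F_i$; moreover nothing substantive is lost, since under your reading $g_j\circ h_i$ still semi-conjugates $F_i$ to $F_j$, because $\of{g_j\circ h_j}\circ\of{g_j\circ h_i}=g_j\circ f\circ h_i=\of{g_j\circ h_i}\circ\of{g_i\circ h_i}$. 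Just be aware that your $F_i$ is not the $F_i$ the paper's diagram intends, and that for the cyclic $F_i$ the conjugacy clause should involve $f^k$ rather than $f$; your route can fairly be viewed as a repair of that point rather than a reproduction of the paper's argument.
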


	\begin{equation*}
	\begin{tikzcd}[baseline=\the\dimexpr\fontdimen22\textfont2\relax]
		&  & I \ar[ld,"g_1"] & I \ar[l,dashed,"f = h_2 \circ g_2" above] \ar[ld,"g_2"] & I \ar[l,dashed,"f = h_1 \circ g_1" above] \ar[ld,"g_1"] & 
		I \ar[l,dashed,"f = h_2 \circ g_2" above] \ar[ld,"g_2"] & I \ar[l,dashed,"f = h_1 \circ g_1" above] \ar[ld,"g_1"] & \ar[l, dotted]\\
		& D_1 & D_2 \ar[u,"h_2" right] \ar[l,dashed,"g_1\circ h_2"] & D_1 \ar[u,"h_1" right] \ar[l,dashed,"g_2\circ h_1"] \ar[ll, bend left=40,"F_1 = g_1 \circ h_2 \circ g_2 \circ h_1"] 
		& D_2 \ar[u,"h_2" right]\ar[l,dashed,"g_1\circ h_2"]& D_1 \ar[u,"h_1" right]\ar[l,dashed,"g_2\circ h_1"] \ar[l, dotted] \ar[ll, bend left=40,"F_1 = g_1 \circ h_2 \circ g_2 \circ h_1"] & \ar[l, dotted]\\
	\end{tikzcd}
\end{equation*}

\vspace{0.5cm}
	
	\bibliographystyle{amsplain}

\begin{thebibliography}{99}
		\bibitem{OprochaD}
		 {\sc G. Acosta, R. Hernández-Gutiérrez, I. Naghmouchi, P. Oprocha} {\em Periodic points and transitivity on dendrites.} {\bf Ergodic Theory Dynam. Systems 37} (2017), 2017–2033.
		
		 \bibitem{Stu} {\sc S. Baldwin,} {\em Entropy estimates for transitive maps on trees.} {\bf Topology 40} (2001),  551–569.
		
		\bibitem{barge-martin} {\sc M. Barge and J. Martin}, \textit{Dense orbits on the interval}, {\bf Michigan Math. J.} 34(1987), 3--11.
		
		\bibitem{Barge} {\sc M. Barge}, {\it Homoclinic intersections and indecomposability.}
		{\bf Proc. Amer. Math. Soc. 101} (1987), no. 3, 541–544.
		
		\bibitem{Barge1} {\sc M.\ Barge, S. Holte} \emph{Nearly one-dimensional H\'enon attractors and inverse limits},
		 {\bf Nonlinearity 8} (1995), 29-42.
		
		\bibitem{Barge2} {\sc M. Barge, B. Diamond,} {\it Stable and unstable manifold structures in the H\'enon family.} {\bf Ergodic Theory Dynam. Systems 19 }(1999),  309--338.
		
		\bibitem{Bing} {\sc R. H. Bing}, {\it A homogeneous indecomposable plane continuum.} {\bf Duke Math. J. 15} (1948), 729--742.
		
		 \bibitem{Blokh} {\sc A. M. Blokh}, {\it The “spectral” decomposition for one-dimensional maps.} In Dynamics Reported, Dynam. Report. Expositions Dynam. Systems (N. S.), no. 4, pages 1–59. Springer, Berlin, 1995.
		 
		\bibitem{BOBirk} {\sc J. Boro\'nski, P. Oprocha} \textit{Rotational chaos and strange attractors on the 2-torus}, \textbf{Math. Z. 279} (2015), 689--702.
		
		\bibitem{BO} {\sc J. P. Boro\'nski, P. Oprocha } \textit{On Entropy of Graph Maps That Give Hereditarily Indecomposable Inverse Limits}, \textbf{Journal of Dynamics and Differential Equations 29} (2017), 685--699.
		
		\bibitem{boronski-stimac} {\sc J. Boro\'nski and S. \v Stimac}, \textit{Densely branching trees as models for Henon-like and Lozi-like attractors},  	arXiv:2104.14780
		
		\bibitem{BoylandBLMS} {\sc P. Boyland, A. de Carvalho, T. Hall,} \textit{Inverse limits as attractors in parametrized families}, {\bf Bull.\ Lond.\ Math.\ Soc.\ 45(5)} (2013), 1075--1085.	
		
		\bibitem{BoylandInventiones} {\sc P. Boyland, A. de Carvalho, T. Hall,}  {\it New rotation sets in a family of torus homeomorphisms.} {\bf Invent. Math.} 204 (2016), no. 3, 895--937.
		
		\bibitem{BoylandGT} {\sc P. Boyland, A. de Carvalho, T. Hall,} {\em Natural extensions of unimodal maps: prime ends of planar embeddings and semi-conjugacy to sphere homeomorphisms},  {\bf Geom. Topol. 25} (2021), no. 1, 111--228.
		
        \bibitem{Bruin}  {\sc H. Bruin,} {\it Planar embeddings of inverse limit spaces of unimodal maps.} {\bf Topology Appl. 96} (1999), 191--208.
		
		\bibitem{KwietniakD} {\sc J. Byszewski, F. Falniowski, D. Kwietniak, }{\it Transitive dendrite map with zero entropy.} 
		{\bf Ergodic Theory Dynam. Systems 37} (2017), 2077--2083.
		
		\bibitem{Cheritat}{\sc Ch\'eritat, A.} \textit{Relatively compact Siegel disks with non-locally connected boundaries.} \textbf{Math. Ann. 349} (2011), 529--542.
		
		\bibitem{CO} {\sc J. \v Cin\v c, P. Oprocha} {\it Pseudo-arc as the attractor in the disc -- topological and measure-theoretic aspects}, arXiv:2107.10347
		
		\bibitem{Drwiega}  {\sc T. Drwiega, P. Oprocha,} {\it Topologically mixing maps and the pseudoarc.}  {\bf Ukrainian Math. J. 66} (2014), 197--208.
		
		\bibitem{engelking} {\sc R. Engelking}, \textit{General topology,} Sigma Series in Pure Mathematics, 6. Heldermann Verlag, Berlin, 1989.
		
		\bibitem{Handel} {\sc M. Handel} \textit{A pathological area preserving $C^\infty$ diffeomorphism of the plane.} \textbf{Proc. Amer. Math. Soc. 6}  (1982), 163--168.
		
		\bibitem{Herman} {\sc M.-R. Herman} {\em Construction of some curious diffeomorphisms of the Riemann sphere.} {\bfseries J. London Math. Soc. 34} (1986), 375--384.
		
    	\bibitem{HoehnD} {\sc L.C. Hoehn, C. Mouron}, {\it Hierarchies of chaotic maps on continua.} {\bf Ergodic Theory Dynam. Systems 34} (2014), 1897--1913.
		
		\bibitem{HO} {\sc L.C. Hoehn, L.G. Oversteegen}, {\it A complete classification of homogeneous plane continua.} {\bf Acta Math. 216} (2016), 177--216.
		
		\bibitem{HO2} {\sc Hoehn, L. C., Oversteegen, L. G.} \textit{A complete classification of hereditarily equivalent plane continua.}  {\bf Adv. Math. 368} (2020), 107131, 8 pp.
		
		\bibitem{Kawamura} {\sc K. Kawamura, H. M. Tuncali, and E. D. Tymchatyn,} {\it Hereditarily indecomposable inverse limits of graphs,} {\bf Fund. Math. 185} (2005), 195--210.
		
		\bibitem{KoscielniakOprochaTunchali}  {\sc P. Ko\'scielniak, P. Oprocha, M. Tuncali,} {\it Hereditarily indecomposable inverse limits of graphs: shadowing, mixing and exactness.} {\bf Proc. Amer. Math. Soc. 142} (2014), 681--694.
		
		\bibitem{Kwapisz} {\sc J. Kwapisz,} {\it A toral diffeomorphism with a nonpolygonal rotation set.} {\bf Nonlinearity 8} (1995), 461--476.	

		\bibitem{Li} {\sc S. Li}, {\it Dynamical properties of the shift maps on the inverse limit spaces}, {\bf Ergodic Theory Dynam. Systems 12} (1992) 95--108.
		
		\bibitem{MouronMix} {\sc V. Martínez-de-la-Vega, J. M. Martínez-Montejano, C. Mouron} \textit{Mixing homeomorphisms and indecomposability}, {\bf Topology Appl. 254} (2019), 50-58.
		
		\bibitem{minc-transue} {\sc P. Minc and W.R.R. Transue}, \textit{ A transitive map on $[0,1]$ whose inverse limit is the pseudoarc}, {\bf Proc. Amer. Math. Soc. 111} (1991), 1165-–1170.
		
		\bibitem{Moise} {\sc E. E. Moise}, {\it An indecomposable plane continuum which is homeomorphic to each of its nondegenerate subcontinua.}
		{\bf Trans. Amer. Math. Soc. 63} (1948), 581–594.
		
		\bibitem{Mouron} {\sc C. Mouron} \textit{Entropy of shift maps of the pseudo-arc}, {\bf Topology Appl. 159} (2012), 34--39.	
		
		\bibitem{Nadler} {\sc Sam B. Nadler, Jr.}, \textit{Continuum Theory: An Introduction,} Monographs and Textbooks in Pure and Applied Mathematics, 158. Marcel Dekker, Inc., New York, 1992.
		
		\bibitem{Rempe-Gillen} {\sc L. Rempe-Gillen} \textit{Arc-like continua, Julia sets of entire functions, and Eremenko's Conjecture}, arXiv:1610.06278v4.
		
		\bibitem{Rohlin} {\sc V.  A.  Rohlin},  {\it Lectures  on  the  entropy  theory  of  transformations  with  invariant measure.} {\bf Uspehi Mat. Nauk 22} (1967), 3--56.
		
		\bibitem{Paco} {\sc F. R. Ruiz del Portal,} {\it Stable sets of planar homeomorphisms with translation pseudo-arcs.} {\bf Discrete Contin. Dyn. Syst. Ser. S 12} (2019), no. 8, 2379–2390.
		
		\bibitem{Wazewski} {\sc T. Wazewski,} {\em Sur les courbes de Jordan ne renfermant aucune courbe simple ferme\'e de Jordan.} {\bf Ann. Soc. Polonaise Math. 2} (1923), 49--170.
		
		\bibitem{WilliamsUn}
		{\sc R.F.\ Williams}, \emph{ A note on unstable homeomorphisms}.
		{\bf Proc. Amer. Math. Soc. 6} (1955), 308--309.
		\bibitem{Williams}
		{\sc R.F.\ Williams}, \emph{One-dimensional non-wandering sets}, {\bf Topology 6} (1967), 473-487.
		
		\bibitem{Whyburn} {\sc G. T. Whyburn}, \textit{Analytic Topology,} Amer. Math. Soc. Colloq. Publ., vol. 28, Amer. Math. Soc., Providence, R.I., 1942.
	\end{thebibliography}

\end{document}